\documentclass[a4paper,fleqn]{cas-sc}

\usepackage[numbers]{natbib}
\usepackage{subcaption}
\usepackage{indentfirst}
\usepackage{amsmath}
\usepackage{mathtools}
\usepackage{amsthm}

\usepackage{graphicx}
\usepackage{enumerate}
\usepackage{listings}
\usepackage{multicol}
\usepackage{booktabs}
\usepackage{float}

\usepackage{siunitx}
\usepackage{xcolor} 

\usepackage[ruled,vlined]{algorithm2e}
\usepackage{algpseudocode}

\usepackage{tikz} % Loaded once here
\usetikzlibrary{shapes.geometric,arrows.meta,positioning,fit,backgrounds,calc}
\usepackage{pgfplots}
\pgfplotsset{compat=1.18} 

\usepackage[labelsep=period]{caption}
\usepackage{subcaption} % Correctly loaded alongside caption

\newcommand{\vu}{\mathbf{u}}

\newcommand{\vx}{\mathbf{x}}

\newcommand{\vn}{\mathbf{n}}

\newcommand{\vX}{\mathbf{X}}

\newcommand{\vU}{\mathbf{U}}
\newcommand{\vV}{\mathbf{V}}

\newcommand{\vA}{\mathbf{A}}

\newcommand{\vD}{\mathbf{D}}

\newcommand{\vK}{\mathbf{K}}

\newcommand{\vI}{\mathbf{I}}
\newcommand{\vS}{\mathbf{S}}

\newcommand{\cA}{\mathcal{A}}
\newcommand{\cB}{\mathcal{B}}

\newcommand{\cE}{\mathcal{E}}
\newcommand{\cF}{\mathcal{F}}
\newcommand{\cG}{\mathcal{G}}

\newcommand{\cT}{\mathcal{T}}

\newcommand{\by}{\bar{y}}
\newcommand\doubleBrackets[1]{\lbrace\hspace*{-0.3em}\lbrace{#1}\rbrace\hspace*{-0.3em}\rbrace}
\newcommand\jump[1]{\left[\hspace*{-0.18em}\left[{#1}\right]\hspace*{-0.18em}\right]}

\newcommand{\bdf}{\mathsf{BDF}}
\newcommand{\ph}{P_w}
\newcommand{\sh}{S_w}

\newcommand{\Q}{\mathbb{Q}}

\newcommand{\R}{\mathbb{R}}

\newcommand{\Z}{\mathbb{Z}}   
   
\newcommand{\T}{\mathbb{T}}   
\newcommand{\I}{\mathbb{I}}

\newcommand{\SBP}{S^{\mathbf{BP}}}
\newcommand{\SOF}{S^{\mathbf{BP+OF}}}
\newcommand{\SOFD}{S^{\mathbf{BP+OF+D}}}

\newcommand{\dt}{\Delta t}

\newcommand{\grad}[1]{\nabla #1}

\definecolor{domainbg}{RGB}{240,245,255}       
\definecolor{blockgreen}{RGB}{60,160,60}        
\definecolor{blockgreenlight}{RGB}{180,230,180} 
\definecolor{hatchcol}{RGB}{30,100,180}         
\definecolor{labelblue}{RGB}{20,80,200}         
\definecolor{labelred}{RGB}{200,40,40}          
\definecolor{arrowcol}{RGB}{30,80,180}          

\tikzset{
  startstop/.style={
    rectangle, rounded corners=6pt,
    minimum width=6.5cm, minimum height=0.72cm,
    fill=gray!18, draw=black, line width=0.6pt, font=\small},
  process/.style={
    rectangle,
    minimum width=6.5cm, minimum height=0.72cm,
    fill=white, draw=black, line width=0.5pt,
    font=\small, text width=6.2cm, align=center},
  arrow/.style={-Stealth, line width=0.6pt, black},
  groupbox/.style={
    rectangle, rounded corners=5pt,
    draw=black, dashed, line width=0.55pt, fill=black!4}
}

\usepackage{tabularx}                                            % Specify total length of tabular.
  \newcolumntype{R}{>{\raggedleft\arraybackslash}X}
  \newcolumntype{L}{>{\raggedright\arraybackslash}X}
  \newcolumntype{C}{>{\centering\arraybackslash}X}
\usepackage{booktabs}                                            % Improve tabulars, enable \top-, \mid-, \bottomrule.
 
\def\tsc#1{\csdef{#1}{\textsc{\lowercase{#1}}\xspace}}
\tsc{WGM}
\tsc{QE}
\newtheorem{thm}{Theorem}[section]

\newtheorem{rmk}{Remark}

\begin{document}
\let\WriteBookmarks\relax
\def\floatpagepagefraction{1}
\def\textpagefraction{.001}

% Short title
\shorttitle{An optimization-based property-preserving limiter}    

% Short author
\shortauthors{S. Lee et al.}  

% Main title of the paper
\title [mode = title]{An optimization-based property-preserving limiter for two-phase flow in porous media on non-matching meshes}  

% Title footnote mark
% eg: \tnotemark[1]
%\tnotemark[1] 

% Title footnote 1.
% eg: \tnotetext[1]{Title footnote text}
%\tnotetext[1]{} 

% First author
%
% Options: Use if required
% eg: \author[1,3]{Author Name}[type=editor,
%       style=chinese,
%       auid=000,
%       bioid=1,
%       prefix=Sir,
%       orcid=0000-0000-0000-0000,
%       facebook=<facebook id>,
%       twitter=<twitter id>,
%       linkedin=<linkedin id>,
%       gplus=<gplus id>]

\author[1]{Sanghyun Lee}%[]
\cormark[1]
\ead{slee17@fsu.edu}
\ead[url]{https://www.math.fsu.edu/~lee/}

% Address/affiliation
\affiliation[1]{organization={Department of Mathematics, Florida State University},
            addressline={1017 Academic Way}, 
            city={Tallahassee},
%          citysep={}, % Uncomment if no comma needed between city and postcode
            postcode={32306}, 
            state={FL},
            country={USA}}

\author[1]{Seungmin Lee}%[<options>]
\ead{sl22b@fsu.edu}

\author[2]{Chen Liu}%[]

% Address/affiliation
\affiliation[2]{organization={Department of Mathematical Sciences, University of Arkansas},
            addressline={850 West Dickson Street}, 
            city={Fayetteville},
            postcode={72701}, 
            state={AR},
            country={USA}}

\ead{chenl@uark.edu}

% Corresponding author text
%\cortext[2]{Corresponding author}

% Footnote text
%\fntext[1]{}

% For a title note without a number/mark
%\nonumnote{}

% Here goes the abstract
\begin{abstract}
Two-phase flow in porous media arises in a wide range of applications in engineering and science. A fundamental challenge in its numerical simulation is that the approximation of the saturation transport equation can violate the maximum principle, and spurious oscillations can appear even when the solution remains within physical bounds. In this work, we propose a bound-preserving anti-oscillation postprocessing limiter for discontinuous Galerkin discretizations of the two-phase flow system on non-matching meshes. An optimization-based cell average limiter enforces the prescribed bounds on the cell averages while preserving global mass by minimizing the $L^2$ distance to the original cell averages. To suppress the remaining oscillations, the cell average limiter is combined with a minmod limiter, which is activated only in the cells identified by a troubled-cell detector to avoid unnecessary loss of accuracy, and a Zhang-Shu scaling limiter subsequently enforces the bounds on point values. In addition, we prove that the cell average limiter preserves the accuracy of the numerical solution at a fixed time on non-matching meshes. Numerical experiments on homogeneous, non-homogeneous, and fractured porous media demonstrate the accuracy and robustness of the proposed method.
\end{abstract}

% Use if graphical abstract is present
%\begin{graphicalabstract}
%\includegraphics{}
%\end{graphicalabstract}

% Research highlights
%\begin{highlights}
%\item 
%\item 
%\item 
%\end{highlights}

% Keywords
% Each keyword is separated by \sep
\begin{keywords}
  two-phase flow \sep discontinuous Galerkin \sep bound-preserving \sep optimization-based limiter \sep Davis-Yin splitting 
\end{keywords}

\maketitle

\section{Introduction}

Two-phase flow in porous media has been widely used to model flow in petroleum reservoirs~\citep{abou2013petroleum, chavent1986mathematical, morel1973two, peaceman2000fundamentals, slattery1970two, whitaker1986flow, kueper1991two,lee2018phase}, environmental engineering~\citep{bear2013dynamics, helmig1997multiphase}, biotechnology applications~\citep{teixeira2018emerging}, and chemical engineering~\citep{azbel1981two}. In a two-phase flow system, we consider the flow of a wetting phase and a non-wetting phase in a porous medium, with possibly heterogeneous characteristics.
The conventional two-phase flow problem consists of a pressure equation coupled with a saturation transport equation~\citep{chen2006computational}. Various numerical methods, including finite difference methods~\citep{abou2013petroleum,peaceman2000fundamentals}, finite element and mixed finite element methods~\citep{helmig1997multiphase, arbogast1992existence, arbogast2002implementation}, finite volume methods~\citep{radu2015robust}, and discontinuous Galerkin (DG) methods~\citep{epshteyn2007fully,hoteit2008efficient,riviere2008discontinuous}, have been employed to solve the two-phase flow problem. 

\par
A fundamental challenge in solving the two-phase flow problem is to ensure that the numerical solution of the saturation equation satisfies the physical bounds throughout the time evolution. 
Violation of this constraint not only yields unphysical solutions but also leads to numerical instabilities, particularly in advection-dominated regimes where sharp saturation fronts develop. Ensuring that the numerical solutions respect this bound constraint is therefore essential for the robustness and physical consistency of the simulation.

\par
Various approaches have been developed to construct bound-preserving schemes. 
For the explicit Euler method, under a suitable CFL condition, the cell average of a discrete solution can be decomposed into a convex combination of monotone first-order schemes at certain quadrature points~\citep{zhang2010maximum, zhang2010positivity}. This approach has been extended to convection-diffusion equations~\citep{sun2018discontinuous, srinivasan2018positivity} and compressible flow~\citep{zhang2017positivity}.
For the implicit Euler method, establishing monotonicity of the system matrix has been used to guarantee a discrete maximum principle~\citep{liu2023positivity, shen2021discrete, hu2023positivity, liu2024structure}. However, the monotonicity does not hold for high-order methods on unstructured meshes~\citep{hohn1981some}.
Another widely used strategy is flux limiting, which constructs a convex combination of a high-order scheme and a first-order bound-preserving scheme~\citep{kuzmin2009constrained, kuzmin2012flux, guermond2021second, guermond2019invariant}. For a comprehensive discussion of the advantages and disadvantages of these approaches, we refer to~\citep{liu2025optimization}.

\par
For two-phase flow in porous media specifically, we mention a non-exhaustive list of publications on bound-preserving DG methods~\citep{sarraf2024bound, joshaghani2022maximum, arbogast2013discontinuous,wang2025physics,chen2006computational,epshteyn2007fully}. 
In particular, the methods developed in \citep{sarraf2024bound, joshaghani2022maximum} enforce the saturation bounds through convex combinations involving cell averages from previous time steps and high-order DG fluxes. While effective for specific formulations, these approaches require problem-specific construction tailored to the structure of the two-phase flow problems.

\par
Optimization-based approaches offer a more flexible framework for enforcing bounds. A bound-preserving limiter for the spectral element method, implemented via standard quadratic solvers, is proposed in~\citep{guba2014optimization}. In~\citep{van2019positivity}, positivity constraints are incorporated within a KKT system for DG methods with implicit time-stepping, solved by an active set semismooth Newton method. Variational inequality constraints for continuous finite element discretizations of convection-diffusion equations are formulated in~\citep{kirby2024high}. However, the resulting optimization problems are often expensive to solve and require problem-specific solver design, which limits their applicability to large-scale computations. 

\par
Since the pressure and saturation equations are strongly coupled in the two-phase flow problem, solving the full system simultaneously can be computationally expensive. To decouple the system, we employ the Implicit Pressure Explicit Saturation (IMPES) method~\citep{ewing1983simulation}, which solves the pressure equation implicitly and the saturation equation explicitly. This sequential approach has been widely used in multiphase flow simulations~\citep{kou2010new, lee2018enriched, chen2004improved, lu2009iterative, arbogast2013discontinuous}.

\par
We incorporate the optimization-based bound-preserving limiter into our DG discretization of the saturation equation. Originally introduced in~\citep{liu2024simple} for phase-field models and subsequently extended to compressible Navier-Stokes equations~\citep{liu2024optimization}, Fokker-Planck equations~\citep{liu2025optimization}, and gas dynamics~\citep{liu2025efficient}, this limiter efficiently enforces physical bounds while preserving conservation and high-order accuracy. 
The optimization-based bound-preserving limiter computes modified cell averages by minimizing the $L^2$ distance to the original cell averages, ensuring that the limited solutions satisfy the prescribed bounds while preserving global mass.
The resulting minimization problem is solved efficiently via splitting methods~\citep{lions1979splitting, ryu2022large}, which enjoy asymptotic linear convergence~\citep{demanet2016eventual, liu2024simple}. The overall algorithm is straightforward to implement and requires only matrix-vector operations and a cut-off operation.  
Although various numerical tests have been presented to validate this approach on different equations, to our best knowledge, its application to porous media flows has not yet been studied in literature. In addition, a theoretical proof of the accuracy is currently available only for uniform meshes. In this paper, we extend it to non-matching meshes.

\par
In simulations of two-phase flow in porous media, spurious oscillations may persist in the vicinity of sharp saturation fronts, even when the discrete solution remains within physical bounds. 
While optimization-based bound-preserving limiters have been successfully constructed in other contexts, their use for two-phase porous media flow has not previously been combined with a dedicated mechanism to actively suppress spurious oscillations, a critical gap we address here with the integration of the minmod oscillation-free limiter.
We apply the minmod limiter~\citep{krivodonova2007limiters} after the optimization-based cell average limiter, which acts on the zeroth-order moment. By contrast, the minmod limiter operates on the higher-order moments and does not change the cell averages. 
To minimize excessive limiting and avoid degradation of the numerical accuracy, a troubled-cell detector~\citep{persson2006sub} is employed so that the minmod limiter is activated only in cells where oscillations are detected. Finally, a Zhang-Shu scaling limiter is employed to enforce bound preservation at the point-value level.
The resulting bound-preserving anti-oscillation framework unites rigorous bound constraints and robust oscillation suppression for DG discretizations of the two-phase flow problem.

\par
Another important issue is the treatment of non-matching meshes. Non-matching meshes frequently arise in porous-media simulations due to local mesh refinement, non-homogeneous geological structures, and domain decomposition techniques. To broaden the applicability of the method, we extend both the optimization-based cell average limiter and the minmod limiter to non-matching meshes. The proposed extension preserves global conservation, bounds, and oscillation-control properties across non-matching meshes. Furthermore, while previous studies established the proof of the accuracy of the optimization-based cell average limiter on uniform meshes~\citep{liu2024optimization,liu2025efficient,liu2024simple}, we extend the theoretical analysis to non-matching meshes, proving that the optimization-based cell average limiter preserves the accuracy of the numerical solution at a fixed time. The resulting method provides a robust and practical approach for constructing high-order DG approximations of two-phase flow problems on both uniform and non-matching meshes.

\par
The rest of this paper is organized as follows. In Section~\ref{governing equation}, we introduce the mathematical model of two-phase flow. In Section~\ref{numerical method}, we describe the DG scheme to solve the two-phase flow. In Section~\ref{limiter}, we explain the details of our bound-preserving anti-oscillation postprocessing procedure on uniform meshes. In Section~\ref{nonmatching-limiter}, we extend our algorithm to non-matching meshes. In Section~\ref{numerical experiments}, numerical tests are conducted to validate our algorithm, and conclusions are drawn in Section~\ref{conclusion}.

\section{Governing equations} \label{governing equation}
Let $\Omega$ be a bounded polygonal domain in $\R^2$, and $\I=(0,\T]$ be the computational time interval with $\T>0$. The two-phase flow is modeled by a system of mass balance equations for each phase $i$,
\begin{equation*}
    \frac{\partial}{\partial t}(\phi \rho_i s_i)+\nabla\cdot\vu_i=\rho_if_i,\quad i\in \{w, n\}, %\label{massbalance}
\end{equation*}
where $\phi$ is the porosity of the porous media, $\rho_i$ is the constant density, $s_i(\vx,t):\Omega\times\I\rightarrow\R$ is the saturation, and $i\in\{w,n\}$ indicates wetting (w) or non-wetting (n) phases, respectively. Here, $\vu_{i}(\vx,t):\Omega\times\I\rightarrow\R^2$ is the Darcy velocity for each phase $i$, given by
\begin{equation}
    \vu_i\coloneqq-\vK\frac{\rho_i k_i}{\mu_i}\nabla p_i. \label{Darcylaw}
\end{equation}
The matrix $\vK\coloneqq K_D(\vx)\vI$ appearing in \eqref{Darcylaw} is the isotropic absolute permeability matrix, where $K_D:\Omega\rightarrow\R$ satisfies $0<K_{\min}\leq K_D(\vx)\leq K_{\max}<+\infty$ with some constants $K_{\min}$ and $K_{\max}$, $\vI\in\R^{2\times2}$ is the identity matrix, $\mu_i$ is the viscosity, and $p_i(\vx,t):\Omega\times\I\rightarrow\R$ is the pressure for each phase. 
For simplicity, we neglect the effect of gravity. The relative permeability $k_i$ is a given function of saturation defined as $k_i\coloneqq k_i(s_w)$, whose explicit form is specified in Section~\ref{numerical experiments}.
We consider that all pores are filled with fluid, the following closure relations must be satisfied:
\begin{equation*}
s_w + s_n = 1 ~\text{and}~ 
p_c(s_w) = p_n-p_w,
\end{equation*}
where $p_c$ is the capillary pressure.
Throughout the paper, we choose the wetting phase pressure $p_w$ and the wetting phase saturation $s_w$ as the primary variables. Our slightly compressible two-phase flow model is given as follows:
\begin{align}
    \phi\frac{\partial}{\partial t}\left(c^F_w\rho_ws_wp_w\right)-\nabla\cdot(\vK\lambda_t\nabla p_w+\vK\lambda_n\nabla p_c)&=(\rho f)_t,\label{compressiblepressure}\\
    \phi\frac{\partial}{\partial t}(\rho_ws_w)+\nabla\cdot\vu_w&=\rho_w f_w,\label{saturation}
\end{align}
where 
\begin{align*}
    \lambda_i&\coloneqq\lambda_i(s_w)=\rho_i\frac{k_i(s_w)}{\mu_i},\ \ \text{phase mobility},\\ 
    \lambda_t&\coloneqq\lambda_t(s_w)=\lambda_w(s_w)+\lambda_n(s_w),\ \ \text{total mobility},\\
    (\rho f)_t&\coloneqq\rho_w f_w+\rho_n f_n,\\
    c_w^F&\coloneqq \text{compressibility coefficient.} 
\end{align*}

We assume that the boundary of domain $\Omega$ is partitioned into disjoint subsets $\partial\Omega=\Gamma_D\cup\Gamma_N =\Gamma_{-}\cup\Gamma_+\cup\Gamma_N$. $\Gamma_D$ is a Dirichlet boundary for the pressure equation, $\Gamma_{-}$ is an inflow boundary for the saturation equation, and $\Gamma_{+}$ is an outflow boundary. 
Let $\vn$ denote an outward normal vector to $\partial\Omega$. We define the inflow and outflow boundaries as
\begin{equation*}
\Gamma_{-} \coloneqq \{\vx\in\partial\Omega:\vu_w\cdot\vn\leq0\} ~\text{and}~ \Gamma_{+} \coloneqq \{\vx\in\partial\Omega:\vu_w\cdot\vn > 0\}.
\end{equation*}
We denote $\Gamma_N$ as the Neumann boundary of the domain. Then, for the flow problem, we impose
\begin{align*}
    p_w&=p_{D}\ \text{on}\ \Gamma_{D}\times\I,\\
    (\vu_w+\vu_n)\cdot\vn&=\vu_N\ \text{on}\ \Gamma_{N}\times\I,
\end{align*}
For the saturation system, we impose
\begin{equation*}
    s_w=s_{-}\ \text{on}\ \Gamma_{-}\times\I,
\end{equation*}
where $s_{-}$ is a given boundary value for saturation. Finally, we supplement the above system with initial conditions
\begin{equation*}
    s_w(\vx,0)=s_w(\vx) ~\text{and}~ p_w(\vx,0)=p_w(\vx),\quad\forall\vx\in\Omega.
\end{equation*}

\section{Numerical Methods}\label{numerical method}
In this section, we formulate the discrete system using a decoupled IMPES algorithm, namely treating pressure implicitly and saturation explicitly. For spatial discretization, we utilize the DG method for both pressure and saturation equations.

\subsection{Temporal discretization}
Given an integer $N_\mathrm{st}\geq 2$, we define a uniform partition of the time interval $0 \eqqcolon t^0 < t^1 < \cdots < t^{N_\mathrm{st}}\coloneqq \T$. Denote $\dt \coloneqq t^k-t^{k-1}$ for the time step size.
Let $\varphi^k=\varphi(t^k)$ denote a time-dependent function at the discrete time level $k$. We define the operator
\begin{equation*}
\bdf_m(\varphi^{k+1})\coloneqq
    \begin{cases}
        \frac{1}{\dt}(\varphi^{k+1}-\varphi^k), & m=1, \\[0.2cm]
        \frac{1}{2\dt}(3\varphi^{k+1}-4\varphi^k+\varphi^{k-1}), & m=2,
    \end{cases}
\end{equation*}
which corresponds to the backward differentiation formulas (BDF) of orders one and two. In this paper, we employ the second-order time marching scheme, namely $m=2$, to discretize the time derivatives for $k\geq 1$.
We have the following semi-discretized formula for the pressure equation
\begin{equation}
    \phi c^F_w\rho_w\bdf_m(s_w^{k+1}p_w^{k+1}) 
    -\nabla\cdot\left(\vK\lambda_t(s_w^{k+1})\nabla p^{k+1}_w\right)-\nabla\cdot\left(\vK\lambda_n(s_w^{k+1})\nabla p_c(s_w^{k+1})\right) = (\rho f)^{k+1}_t.\label{timeflow}
\end{equation}
And the saturation equation is discretized by
\begin{equation}
\phi\rho_w\bdf_m(s_w^{k+1}) + \nabla\cdot\vu^{k+1}_w = \rho_wf^{k+1}_w.\label{timesaturation}
\end{equation}

\paragraph{\textbf{Decoupling system}.}
Notice that the system \eqref{timeflow}-\eqref{timesaturation} is fully coupled and nonlinear. To facilitate more tractable computational expediency and algorithmic simplicity, we adapt the IMPES algorithm~\citep{ewing1983simulation} to decouple the equations and solve them sequentially. 
For uniform time steps, we define the extrapolation $\varphi^{k+1,\ast}$ to approximate the time-dependent variables by
\begin{equation*}
    \varphi^{k+1,\ast}\coloneqq\varphi^{k}+(\varphi^{k}-\varphi^{k-1}).
\end{equation*}
At time step $k\geq1$, the IMPES algorithm solves the two-phase flow system \eqref{timeflow}-\eqref{timesaturation} as follows: given $p_w^{k-1},p_w^{k},s^{k-1}_w$ and $s^{k}_w$,

\medskip
\textbf{Step 1.} Solve $p_w^{k+1}$ at time $t^{k+1}$ using $\lambda_i(s^{k+1,\ast})$ and $p_c(s^{k+1,\ast})$:
\begin{align*}
    \phi c^F_w\rho_w\bdf_m(s_w^{k+1,\ast} p_w^{k+1})
    - \nabla &\cdot \left(\mathbf{K}\lambda_t(s_w^{k+1,\ast})\nabla p^{k+1}_w\right) 
    = (\rho f)^{k+1}_t
    + \nabla\cdot\left(\mathbf{K}\lambda_n(s_w^{k+1,\ast})\nabla p_c(s_w^{k+1,\ast})\right).
\end{align*}

\medskip
\textbf{Step 2.} Solve $s_w^{k+1}$ using an explicit time stepping:
\begin{equation*}
    \phi\rho_w\bdf_m(s_w^{k+1})
    = \rho_w f^{k+1}_w
    - \nabla\cdot\vu^{k+1,\ast}_w.
\end{equation*}
To initialize the algorithm above, we use the initial condition $s_w^0$ and set $s_w^{1} = s_w^0$ at the first time step.

\subsection{Spatial discretization}
Let $\cT_h=\{E_i\}_{i=1}^N$ be a shape-regular quadrilateral partition of $\Omega$ consisting of square elements. 
We denote by $h_E$ the diameter of $E\in \cT_h$ and set $h=\max_{E\in\cT_h}h_E$. Note that $h_E = h$ for a uniform mesh. We denote by $\cE_h$ the set of all edges and by $\cE_h^I$ and $\cE_h^{\partial}$ the collection of all interior and boundary edges, respectively. 
For the pressure equation, the boundary edges $\cE_h^{\partial}$ are further decomposed into $\cE_h^{\partial} = \cE_h^{D,\partial}\cup\cE_h^{N,\partial}$. Here, $\cE_h^{D,\partial}$ is the collection of edges where the Dirichlet boundary condition is imposed; and $\cE_h^{N,\partial}$ is the collection of edges where the Neumann boundary condition is imposed. 
In addition, let $\cE_h^{1}\coloneqq\cE_h^{I}\cup\cE_h^{D,\partial}$ and $\cE_h^{2}\coloneqq\cE_h^{I}\cup\cE_h^{N,\partial}$. 
For the transport problem, the boundary edges $\cE_h^{\partial}$ are decomposed into $\cE_h^{\partial}=\cE_h^{-,\partial}\cup \cE_h^{+,\partial}$. Here, $\cE_h^{-,\partial}$ is the collection of edges where the inflow boundary condition is imposed; and $\cE_h^{+,\partial}$ is the collection of edges where the outflow boundary condition is imposed.

The space $H^s(\cT_h)$, $s\in\Z_+$, is the set of element-wise $H^s$ functions on $\cT_h$, and $L^2(\cE_h)$ refers to the set of functions whose traces on the elements of $\cE_h$ are square integrable. Let $\Q_{1}(E_i)$ denote the space of polynomials of partial degree at most $1$ defined on $E_i$.
We introduce the space of piecewise bilinear discontinuous polynomials as 
\begin{equation}
M^{1}(\cT_h) \coloneqq \{\psi \in L^2(\Omega):~ \psi|_{E}\in\Q_{1}(E),~ \forall E\in\cT_h \}.
\end{equation}
For any space $\vX$, $\vX^2$ denotes the space of size $2$ vectors, whose components belong to $\vX$; and $\vX^{2\times 2}$ denotes the space of $2\times 2$ matrices, whose components belong to $\vX$.
We employ the standard notation for Sobolev norms. 
On a subdomain $O\subset\Omega$, the $H^1$ norm and seminorm are denoted by $\|\cdot\|_{H^1(O)}$ and $|\cdot|_{H^1(O)}$, respectively. We denote the $L^2$ norm on $O$ by $\|\cdot\|_{L^2(O)}$. When $O = \Omega$, we suppress the set subscript and write $\|\cdot\|_{H^1}$ and $\|\cdot\|_{L^2}$, and analogously for the seminorm. 
In addition, we use $\|\cdot\|_2$ to denote the Euclidean norm.

\par
For any interior edge $e\in\cE^I_h$, let $E^-$ and $E^+$ be two neighboring elements such that $e = \partial E^-\cap \partial E^+$. We define a unit normal vector $\vn_e$ that points from $E^-$ to $ E^+$. For any boundary edge $e\in\cE_h^{\partial}$, the normal vector $\vn_e$ is taken to be the unit outward vector to $\partial\Omega$.
%For a given scalar function $x$ and vector function $\vx$, defined on the $\cT_h$, we denote $x^{\pm}$ and $\vx^{\pm}$ by the restrictions of $x$ and $\vx$ to $E^{\pm}$, respectively. 
%
We define the average $\doubleBrackets{\cdot}$ and jump $\jump{\cdot}$ of a scalar quantity $x$ across the interior edge $e\in\cE^I_h$ as follows:
\begin{equation*}
\doubleBrackets{x}\coloneqq\frac{1}{2}(x|_{E^-} + x|_{E^+}) 
\quad\text{and}\quad
\jump{x} \coloneqq x|_{E^-} - x|_{E^+}.
\end{equation*}
On a boundary edge $e \in \cE^{\partial}_h$, the average and jump coincide with its trace, i.e., we set $\doubleBrackets{x} \coloneqq x$ and $\jump{x} \coloneqq x$. 
The related definitions of any vector quantity are similar \citep{riviere2008discontinuous}.
For scalar inner products, we use the notations:
\begin{equation*}
    (v,w)_{\cT_h}\coloneqq \sum_{E\in\cT_h}{\int_E{vw\,dx}},\quad\forall v,w\in L^2(\cT_h),\quad
    \left<v,w\right>_{\cE_h}\coloneqq\sum_{e\in\cE_h}{\int_e vw\,d\gamma},\quad\forall v,w\in L^2(\cE_h).
\end{equation*}
The related inner product notations for vectors are similar.

\par
Throughout this paper, both the pressure and saturation equations are discretized using the incomplete interior penalty DG method~\citep{dawson2004compatible,sun2005symmetric}. In the following, we first introduce the DG formulation of the pressure equation. The wetting-phase mass flux is subsequently evaluated from the numerical approximation of $p_w^{k+1}$ and extrapolated saturation $s_w^{k+1,\ast}$. Finally, we introduce the DG formulation of the saturation equation.

\paragraph{\textbf{Discretization of the pressure equation}.}
Let us denote the spatial approximation of the wetting phase pressure $p_w(\vx,t)$ as $\ph(\vx,t)\in M^1(\cT_h)$ and let $\ph^k\coloneqq\ph(\vx,t^k)$ for time discretization, $0\leq k\leq N_\mathrm{st}$. 
At time $t^{k+1}$, let $p_D^{k+1}$ be approximations of $p_D(\cdot,t^{k+1})$ on $\Gamma_D$; let $\vu_{N}^{k+1}$ be an approximation of $\vu_{N}(\cdot,t^{k+1})$ on $\Gamma_N$; and let $f^{k+1}$ be an approximation of $f(\cdot, t^{k+1})$ in $\Omega$.
Then, the implicit DG formulation of the pressure equation can be written as follows: Given $\ph^{k-1}, \ph^k$, find
\begin{equation}
    \ph^{k+1}\in M^1(\cT_h) ~\text{such\ that}~ \cA(\ph^{k+1},w)=\cF(w),\quad\forall w\in M^1(\cT_h),\label{flow_weak}
\end{equation}
where $\cA$ and $\cF$ are the bilinear form and the linear functional, respectively, defined as
\begin{align*}
    \cA(\ph^{k+1}, w)\coloneqq&
    \left((\phi\rho_w c^F_w s_w^{k+1,\ast})\frac{3}{2\dt}\ph^{k+1},w\right)_{\cT_h}+
    \left(\vK \lambda_t(s^{k+1,\ast}_w)\nabla\ph^{k+1},\nabla w\right)_{\cT_h}\\&-\left<\doubleBrackets{\vK \lambda_t(s^{k+1,\ast}_w) \nabla\ph^{k+1}\cdot\vn_e},\jump{w}\right>_{\cE_h^{1}}+\left<\frac{\sigma}{h_e}\jump{\ph^{k+1}},\jump{w}\right>_{\cE_h^{1}},
\end{align*}
and
\begin{align*}
\cF(w)\coloneqq&
\left((\phi\rho_w c^F_w s_w^{k})\frac{2}{\dt}\ph^{k}-(\phi\rho_w c^F_w s_w^{k-1})\frac{1}{2\dt}\ph^{k-1},w\right)_{\cT_h}+\left((\rho f)^{k+1}_t,w\right)_{\cT_h}\\
&-\left(\vK\lambda_n(s_w^{k+1,\ast})\nabla p_c(s_w^{k+1,\ast}),\nabla w\right)_{\cT_h}+\left<\doubleBrackets{\vK\lambda_n(s^{k+1,\ast}_w)\nabla p_c(s_w^{k+1,\ast})\cdot\vn_e},\jump{w}\right>_{\cE_h^1}\\
&-\left<\frac{\sigma}{h_e}\jump{p_c(s^{k+1,\ast}_w)},\jump{w}\right>_{\cE^1_h}+\left<\frac{\sigma}{h_e}p_{D}^{k+1},w\right>_{\cE_h^{D,\partial}}-\left<\vu_N^{k+1},w\right>_{\cE^{N,\partial}_h}.
\end{align*}
Here, $h_e=\min{(h_{E^-},h_{E^+})}$ and $\sigma>0$ is the penalty parameter.

\paragraph{\textbf{Discretization of the saturation equation}.}
Let $s_{-}^{k+1}$ be an approximation of $s_{-}(\cdot,t^{k+1})$ on $\Gamma_{-}$ at time $t^{k+1}$. The numerical approximation of the wetting phase saturation $s_w(\vx,t)$ is denoted by $\sh(\vx,t)\in M^{1}(\cT_h)$ and we let $\sh^k\coloneqq\sh(\vx,t^k)$ for time discretization. With $\ph^{k+1}$ computed by \eqref{flow_weak}, the explicit DG formulation of the saturation equation is written as follows: Given $\sh^{k-1}, \sh^{k}$, find
\begin{equation*}
\sh^{k+1}\in M^{1}(\cT_h) ~\text{such~that}~ \cB(\sh^{k+1},\psi)=\cG(\psi),\quad\forall\psi\in M^{1}(\cT_h), %\label{saturation_weak}
\end{equation*}
where,
\begin{align*}
\cB(\sh^{k+1},\psi)=\left(\phi\rho_w\frac{3}{2\dt}\sh^{k+1},\psi\right)_{\cT_h},
\end{align*}
and
\begin{align*}
\cG(\psi) = \left(\phi\rho_w\frac{2\sh^{k}}{\dt}-\phi\rho_w\frac{\sh^{k-1}}{2\dt},\psi\right)_{\cT_h} + (\rho_w f_w^{k+1},\psi)_{\cT_h} + (\vU_w^{k+1,\ast},\nabla\psi)_{\cT_h}-\left<\vU^{k+1,\ast}_w\cdot\vn_e,\jump{\psi}\right>_{\cE_h}.
\end{align*}

\paragraph{\textbf{Computation of $U^{k+1,\ast}_w$.}} With the computed spatial approximation of the wetting phase pressure $P_w^{k+1}$, we define the wetting phase velocity as $\vU^{k+1,\ast}_w$ which is a globally and locally conservative flux variable. The details for convergence analysis of conservative flux variables can be found in~\citep{lee2016locally,sun2009locally}. With the extrapolated saturation value $s_w^{k+1,\ast}$, we define the conservative flux variables $\vU_w^{k+1,\ast}$ at time step $t^{k+1}$ as follows: 
\begin{align*}
\vU^{k+1,\ast}_w|_E&\coloneqq-\vK\lambda_w(s_w^{k+1,\ast})\grad P_w^{k+1},\quad \forall E\in\cT_h,\\ %\label{flux1}
\vU^{k+1,\ast}_w|_e\cdot\vn_e&\coloneqq-\left(\lambda_w(s_w^{k+1,\ast})\right)^{\uparrow}\doubleBrackets{\vK\grad P_w^{k+1}}\cdot\vn_e+\frac{\sigma}{h_e}\left(\frac{\lambda_w(s_w^{k+1,\ast})}{\lambda_t(s_w^{k+1,\ast})}\right)^{\uparrow}\jump{P_w^{k+1}},\quad \forall e\in\cE^I_h, \\ %\label{flux2}
\vU^{k+1,\ast}_w|_e\cdot\vn_e&\coloneqq\frac{\lambda_w(s_w^{k+1,\ast})}
     {\lambda_t(s_w^{k+1,\ast})}
(\vu_N^{k+1}+\mathbf{K}\lambda_n(s_w^{k+1,\ast})\nabla p_c(s_w^{k+1,\ast})\cdot\vn_e),\quad \forall e\in\cE^{N,\partial}_h,\\ %\label{flux3}
\vU^{k+1,\ast}_w|_e\cdot\vn_e&\coloneqq-\left(\vK\lambda_w(s_w^{k+1,\ast})\grad P_w^{k+1}\right)\cdot\vn_e+\frac{\sigma}{h_e}\left(\frac{\lambda_w(s_w^{k+1,\ast})}{\lambda_t(s_w^{k+1,\ast})}\right)(P_w^{k+1}-p^{k+1}_{D}),\quad \forall e\in\cE^{D,\partial}_h. %\label{flux4}
\end{align*}
Here, we define the upwind operator $(\cdot)^{\uparrow}$ as:
\begin{equation}
    \forall e\in\partial E_i^{+}\cap\partial E_i^{-},\quad\xi^{\uparrow}|_e=\begin{cases}
        \xi|_{E_i^{-}},\ &\mathrm{if}\ \doubleBrackets{\vV^{k+1,\ast}_w} > 0,\\
        \xi|_{E_i^{+}},\ &\mathrm{if}\ \doubleBrackets{\vV^{k+1,\ast}_w} \leq 0, \nonumber
    \end{cases}
\end{equation}
where $\xi$ is any interface quantity and $\doubleBrackets{\vV_w^{k+1,\ast}} = -\doubleBrackets{\lambda_w(s_w^{k+1,\ast})\vK\grad P_w^{k+1}}\cdot\vn_e+\frac{\sigma}{h_e}\doubleBrackets{\frac{\lambda_w(s_w^{k+1,\ast})}{\lambda_t(s_w^{k+1,\ast})}}\jump{P_w^{k+1}}$. 

To start the BDF2 time marching for our fully discrete scheme, we set $S_w^0$ and $P_w^0$ by pointwise interpolation of the initial data, and simply replicate these values for the first time level, namely $S_w^1 = S_w^0$ and $P_w^1 = P_w^0$.

\section{Postprocessing procedure on uniform meshes}\label{limiter}

When solving the saturation equation \eqref{saturation}, it is essential to preserve several properties of the solution. In particular, the numerical approximation should satisfy the discrete maximum principle, both to yield a physically meaningful solution and to guarantee numerical stability. In this paper, we employ the optimization-based bound-preserving limiter introduced in~\citep{liu2024simple}. 
\par
Notice, although our bound-preserving limiter effectively enforces the maximum principle, it does not directly address spurious oscillations that may arise near sharp transitions. Such oscillations are frequently encountered in two-phase flow simulations and can affect the accuracy of the solution even when the computed saturation remains within prescribed bounds. To resolve this issue, we incorporate an additional anti-oscillation step based on the minmod limiter~\citep{krivodonova2007limiters}. 
\par
Our bound-preserving anti-oscillation postprocessing procedure consists of sequentially applying limiters. First, an optimization-based cell-average limiter enforces the zeroth-order moment of the discrete solution in the admissible set. Second, a minmod limiter is applied to eliminate oscillatory behavior that may remain. Finally, the Zhang-Shu scaling limiter adjusts the higher-order moments to ensure point values remain in bounds. The details are presented in the following sections.

\subsection{Optimization-based bound-preserving limiter on uniform meshes} \label{BPlimiter-uniform}
We start by presenting an efficient algorithm for the optimization-based cell average limiter. 
Recall $\sh^{k+1}$ is the DG approximation of $s_w(\vx,t^{k+1})$. We define a piecewise constant function $\overline{S}_w^{k+1}$ whose value in each cell $E_i$ represents the cell average of $S_w^{k+1}$, namely $\overline{S}_w^{k+1}|_{E_i} = \frac{1}{|E_i|}\int_{E_i}{S_w^{k+1}}dx$. With a desired range interval $[m,M]$, we find a piecewise constant function $\chi_w$ that minimizes the $L^2$ distance to $\overline{S}_w^{k+1}$ under the constraints of preserving conservation and bounds:
\begin{equation}
\min_{\chi_w}{\|\chi_w-\overline{S}_w^{k+1}\|_{L^2}^2}\ \ \text{subject~to}\ \ \int_{\Omega}{\chi_w\,dx}=\int_{\Omega}{S_w^{k+1}\,dx}\ \ \text{and}\ m \leq \chi_w\leq M. \label{constraints}
\end{equation}

% \subsubsection{DR splitting}
We denote the solution of the minimization problem \eqref{constraints} by $\overline{\chi}_w$. To find the minimizer $\overline{\chi}_w$ effectively, we apply the generalized Douglas-Rachford (DR) splitting method. 
We refer to~\citep{demanet2016eventual,zhang2026ubiquitous} and the references therein for details on the equivalences of generalized DR method and popular splitting methods, including PDHG~\citep{chambolle2016introduction}, ADMM~\citep{fortin2000augmented}, and the split Bregman method~\citep{goldstein2009split}.
\par
Let us rewrite \eqref{constraints} in matrix-vector form. Define a matrix $\mathbf{1}=[1,1,\cdots,1]^T\in\R^{N\times 1}$, where $N$ is the total number of active cells. A vector $\omega\in\R^N$ is the vector that stores the cell average of $S_w^{k+1}$, namely $\omega_i = \overline{S}_w^{k+1}|_{E_i}$. Then, with $\zeta\in\R^N$, we define a cut-off operator $S(\zeta):\R^N\rightarrow\R^N$ by
\[[S(\zeta)]_i = \min{(\max{(\zeta_i,m)},M)},\quad \forall i = 1,\cdots,N.\] 
The algorithm of DR splitting~\citep{lions1979splitting} to find the minimizer of \eqref{constraints} is as follows: set $y^0 = \omega$, $x^0 = S(\omega)$, $b = \mathbf{1}^T\omega$, and $l=0$. With appropriate constants $c$, $\lambda$, and convergence tolerance $\varepsilon$,

\begin{itemize}
\item[] \textbf{Step 1.} Compute $z^l = 2x^l - y^l$
\item[] \textbf{Step 2.} Compute $y^{l+1} = \lambda c \left(z^l + \frac{1}{N}\left(b - \sum z^l_i\right)\mathbf{1}\right) + \lambda(1-c)\omega + y^l - \lambda x^l$
\item[] \textbf{Step 3.} Compute $x^{l+1} = S(y^{l+1})$
\item[] \textbf{Step 4.} If $h\|y^{l+1} - y^l\|_2 < \varepsilon$, terminate and output $\overline{\chi}_w = x^{l+1}$. Otherwise, set $l \leftarrow l+1$ and go to \textbf{Step 1}.
\end{itemize}
The nearly optimal parameters $c$ and $\lambda$ for the algorithm above are specified as follows~\citep{liu2024simple}. Let $N_{\hat{r}}$ be the number of bad cells for which $\overline{S}_w \notin [m,M]$ and let $\hat{\theta}=\cos^{-1}{\sqrt{\frac{N_{\hat{r}}}{N}}}$, then we have
\begin{equation*}
\begin{cases}
c=\frac{1}{2},\ \lambda = \frac{4}{2-\cos{(2\hat{\theta})}},\qquad &\mathrm{if}\ \hat{\theta}\in(\frac{3}{8}\pi,\frac{1}{2}\pi],\\
c=\frac{1}{(\cos\hat{\theta}+\sin\hat{\theta})^2},\ \lambda = \frac{2}{1+\frac{1}{1+\cot(\hat{\theta})}-\frac{1}{(\cos\hat{\theta}+\sin\hat{\theta})^2}},\qquad&\mathrm{if}\ \hat{\theta}\in(\frac{1}{4}\pi,\frac{3}{8}\pi],\\
c=\frac{1}{(\cos\hat{\theta}+\sin\hat{\theta})^2},\ \lambda = 2,\qquad&\mathrm{if}\ \hat{\theta}\in(0,\frac{1}{4}\pi].
\end{cases}
\end{equation*}
After computing $\overline{\chi}_w$, we postprocess the solution $S_w^{k+1}$ to be 
\begin{equation}
\hat{S}^{k+1}_w = (S_w^{k+1}-\overline{S}_w^{k+1})+\overline{\chi}_w. \label{postprocess}
\end{equation}

Note that only the cell-wise average of $\hat{S}_w^{k+1}$ is in $[m,M]$. The point values of $\hat{S}_w^{k+1}$ may still contain overshoots or undershoots. To eliminate these out-of-bounds point values, we apply the following Zhang-Shu limiter after the minmod limiter described in Section~\ref{sec:minmod}. 
Let $\dot{S}_w^{k+1}$ denote the cell average of $\hat{S}_w^{k+1}$ on a cell $E\in\cT_h$. 
Define $M'\coloneqq\max_{E}{\hat{S}_w^{k+1}}$ and $m'\coloneqq\min_{E}{\hat{S}_w^{k+1}}$. Then, the bound-preserving solution $\tilde{S}_w^{k+1}$ is given by
\begin{equation}
\tilde{S}_w^{k+1} = \theta(\hat{S}_w^{k+1} - \dot{S}_w^{k+1}) + \dot{S}_w^{k+1},
~\text{where}~
\theta=\min{\left(\left|\frac{M-\dot{S}_w^{k+1}}{M'-\dot{S}_w^{k+1}}\right|,\left|\frac{m-\dot{S}_w^{k+1}}{m'-\dot{S}_w^{k+1}}\right|\right)}.\label{zslimiter}
\end{equation}

The accuracy of the optimization-based cell average limiter \eqref{constraints} at a fixed time step is demonstrated in~\citep{liu2025optimization}. We extend this algorithm to non-matching meshes, as presented in Section~\ref{nonmatching-limiter}.

\subsection{Minmod limiter on uniform meshes}\label{sec:minmod}
As discussed in Section~\ref{BPlimiter-uniform}, the optimization-based bound-preserving limiter ensures that the numerical solution satisfies the discrete maximum principle; however, it does not fully eliminate oscillations. Such oscillations can deteriorate the accuracy and stability of the numerical solution. To address this issue, we incorporate the minmod limiter, introduced in~\citep{krivodonova2007limiters}, which suppresses oscillatory behavior while retaining the high-order accuracy of the solution as much as possible.

Let $\{P_{00},P_{10},P_{01},P_{11}\}$ denote the orthonormal $\Q_1$ Legendre basis on the reference element $[-1,1]^2$, defined by
\begin{equation*}
P_{00}(x,y)=\frac{1}{2},\quad 
P_{10}(x,y)=\frac{\sqrt{3}}{2}x,\quad 
P_{01}(x,y)=\frac{\sqrt{3}}{2}y,\quad 
P_{11}(x,y)=\frac{3}{2}xy.
\end{equation*}
Then, for a cell $E_i\in\cT_h$, $\hat{S}_w^{k+1}$ restricted to $E_i$ is represented by 
\begin{equation}
    \hat{S}_{w}^{k+1}|_{E_i} = s_{00}^iP_{00}(x,y)+s_{10}^iP_{10}(x,y)+s_{01}^iP_{01}(x,y)+s_{11}^iP_{11}(x,y), \label{legendre}
\end{equation}
where $s_{00}^i$, $s_{10}^i$, $s_{01}^i$, and $s_{11}^i$ are the modal coefficients associated with the cell $E_i$. Let $E_{i,L}$, $E_{i,R}$, $E_{i,B}$, and $E_{i,T}$ denote the neighboring cells of $E_i$ in the left, right, bottom, and top directions, respectively. The stencil employed in applying the minmod limiter to the target cell $E_i$ is shown in Figure~\ref{uniform-minmod-stencil}.

\begin{figure}[ht!]
\centering
\begin{tikzpicture}[scale=1., line join=round, line cap=round]

\definecolor{cellgreen}{RGB}{145,235,145}
\definecolor{cellcyan}{RGB}{175,245,245}

\tikzset{
  mesh/.style={draw=black, line width=0.9pt},
  thickmesh/.style={draw=black, line width=1.4pt},
  arrow/.style={-latex, line width=1.1pt}
}

\draw[thickmesh] (0,0) rectangle (3,3);

\draw[mesh] (1,0) -- (1,3);
\draw[mesh] (2,0) -- (2,3);
\draw[mesh] (0,1) -- (3,1);
\draw[mesh] (0,2) -- (3,2);

\fill[cellgreen] (1,2) rectangle (2,3);
\fill[cellgreen] (1,0) rectangle (2,1); 
\fill[cellgreen] (0,1) rectangle (1,2); 
\fill[cellgreen] (2,1) rectangle (3,2);

\fill[cellcyan] (1,1) rectangle (2,2);

\draw[thickmesh] (0,0) rectangle (3,3);
\draw[mesh] (1,0) -- (1,3);
\draw[mesh] (2,0) -- (2,3);
\draw[mesh] (0,1) -- (3,1);
\draw[mesh] (0,2) -- (3,2);

\coordinate (C) at (1.5,1.5);
\node at (1.5,1.5) {$\mathbf{E_i}$};

\draw[arrow] (1.2,1.5) -- (0.2,1.5);
\node at (0.7,1.68) {$\mathbf{E_{i,L}}$};

\draw[arrow] (1.75,1.5) -- (2.75,1.5);
\node at (2.3,1.68) {$\mathbf{E_{i,R}}$};

\draw[arrow] (1.5,1.8) -- (1.5,2.8);
\node at (1.75,2.35) {$\mathbf{E_{i,T}}$};

\draw[arrow] (1.5,1.3) -- (1.5,0.3);
\node at (1.75,0.65) {$\mathbf{E_{i,B}}$};

% \draw[arrow] (1.0,1.5) -- (1.1,1.5);
% \draw[arrow] (2.1,1.5) -- (1.7,1.5);
% \draw[arrow] (1.5,2.1) -- (1.5,1.8);
% \draw[arrow] (1.5,0.9) -- (1.5,0.8);

% \filldraw[black] (C) circle (1.5pt);

\end{tikzpicture}
\caption{Illustration of a 2D stencil used for limiting on a uniform mesh. The cell \(E_i\) is shown in cyan, and its left, right, bottom, and top neighboring cells are used in the minmod limiter.}
\label{uniform-minmod-stencil}
\end{figure}

Here, we define a minmod function as
\begin{equation*}
\mathrm{minmod}(a_1,\cdots,a_n) = \begin{cases}
\mathrm{sgn}(a_1)\min_{i}{(|a_i|)},\ &\mathrm{if}\ \mathrm{all\ }a_i\ \mathrm{have\ the\ same\ sign},\\
0,\ &\mathrm{otherwise}.
\end{cases}
\end{equation*}

We first limit the highest-order coefficient $s^i_{11}$. We define
\begin{equation}
    \tilde{s}_{11}^i=\mathrm{minmod}(s_{11}^i,\ \alpha(s^{i,T}_{10}-s^i_{10}),\ \alpha(s^{i}_{10}-s^{i,B}_{10}),\ \alpha(s^{i,R}_{01}-s^i_{01}),\ \alpha(s^{i}_{01} -s^{i,L}_{01})),\label{minmod-higher}
\end{equation}
where $s^{i,\cdot}_{\cdot,\cdot}$ represents the modal coefficients in the cell $E_{i,\cdot}$. If $\tilde{s}_{11}^i=s_{11}^i$, then no further limiting is applied and the remaining coefficients are left unchanged. Otherwise, we replace $s_{11}^{i}$ with $\tilde{s}_{11}^i$, and the lower-order coefficients are limited according to the following:

\begin{equation}
\tilde{s}_{10}^i=\mathrm{minmod}(s_{10}^i,\ \alpha(s^{i,R}_{00}-s^i_{00}),\ \alpha(s^{i}_{00}-s^{i,L}_{00})),\quad \tilde{s}_{01}^i=\mathrm{minmod}(s_{01}^i,\ \alpha(s^{i,T}_{00}-s^i_{00}),\ \alpha(s^{i}_{00}-s^{i,B}_{00})),\label{minmod-lower}
\end{equation}
Note that the $\alpha$ is a constant depending on the order of the coefficient. The following choice of $\alpha$ is proposed:
\[\frac{1}{2\sqrt{3}}\leq\alpha  \leq \frac{1}{\sqrt{3}}.\]
Choosing $\alpha$ outside of this range resulted in either loss of accuracy or numerical instability~\citep{krivodonova2007limiters}. In the numerical experiments presented in Section \ref{numerical experiments}, we choose the smallest $\alpha$ value, i.e., the most diffusive choice.

\paragraph{\textbf{Troubled-cell detector}.}
When applying the minmod limiter, a uniform constant $\alpha$ is used across all cells to limit the coefficients. However, a more careful treatment is required in regions containing spurious oscillations or sharp saturation transitions. To tackle this problem, we employ the troubled-cell detector introduced in~\citep{persson2006sub}, which is designed to identify high-frequency components and sharp variations in the numerical solution. Furthermore, restricting the minmod limiter to only the detected troubled cells reduces the unnecessary loss of accuracy caused by limiting. 
    
We consider a truncated expansion of the same solution, only containing the lower-order terms, i.e.,
\begin{equation*}
{S}_w^{k+1,-}|_{E_i} = s_{00}^iP_{00}(x,y) + s_{10}^iP_{10}(x,y) + s_{01}^iP_{01}(x,y).
\end{equation*}
Then, in the cell $E_i$, the following smoothness indicator is defined
\begin{equation*}
\delta_{i} = \frac{\|\hat{S}_{w}^{k+1}|_{E_i} - {S}_{w}^{k+1,-}|_{E_i}\|_{L^2(E_i)}^2}{\|\hat{S}_{w}^{k+1}|_{E_i}\|_{L^2(E_i)}^2}.
\end{equation*}
Since the basis functions are orthonormal, the $L^2$ norms above are simply evaluated as the Euclidean norm of their expansion coefficients, namely
\begin{equation}\label{detector}
\delta_i = \frac{(s_{11}^{i})^2}{\sum_{l,m=0}^1{(s_{lm}^{i})^2}}.
\end{equation}

By looping all cells in the domain, if $\delta_{i}\geq\varepsilon_i$, where $\varepsilon_i$ is a threshold, we mark the cell as a troubled cell and apply the minmod limiter to the cell.
\par
Finally, we illustrate the full algorithm of our bound-preserving anti-oscillation postprocessing procedure in Figure~\ref{fig:algorithm-flowchart}.

\begin{figure}[ht!]
\centering
\resizebox{\textwidth}{!}{%
\begin{tikzpicture}[node distance=0.55cm]

% --- Init node (centered above both columns) ---
\node (init) [startstop, minimum width=8cm]
  {Initialize: solve \eqref{compressiblepressure}--\eqref{saturation} for $S_w^{k+1}$};

% --- left column (optimization-based cell average limiter) ---
\node (step1) [process, below=1.5cm of init, xshift=-4cm]
  {\textbf{Step 1.} Compute cell-average vector $\overline{S}_w^{k+1}$};
\node (step2) [process, below=of step1]
  {\textbf{Step 2.} Solve optimization problem \eqref{constraints}; obtain $\overline{\chi}_w$};
\node (step3) [process, below=of step2]
  {\textbf{Step 3.} Set $\hat{S}_w^{k+1} = \bigl(S_w^{k+1} - \overline{S}_w^{k+1}\bigr) + \overline{\chi}_w$};

% --- right column ---
\node (step4) [process, below=1.5cm of init, xshift=4cm]
  {\textbf{Step 4.} Convert $\hat{S}_w^{k+1}$ to Legendre basis};
\node (step5) [process, below=of step4]
  {\textbf{Step 5.} Mark troubled cells via smoothness indicator \eqref{detector}};
\node (step6) [process, below=of step5]
  {\textbf{Step 6.} Apply minmod limiter (Section~\ref{sec:minmod}) to troubled cells};
\node (step7) [process, below=of step6]
  {\textbf{Step 7.} Convert limited solution back to Lagrange basis};

% --- left column (Zhang-Shu limiter) ---
\node (step8) [process, below=1.35cm of step3]
  {\textbf{Step 8.} Apply Zhang-Shu limiter \eqref{zslimiter}; obtain $\tilde{S}_w^{k+1}$};

% --- Final node (centered below both columns) ---
\node (step9) [startstop, minimum width=8cm,
  below=1.8cm of step7, xshift=-3.2cm]
  {\textbf{Step 9.} Set $S_w^{k+1} = \tilde{S}_w^{k+1}$; advance to $t^{k+2}$};

% --- Group boxes ---
\begin{scope}[on background layer]
  \node[groupbox, fit=(step1)(step2)(step3), inner sep=7pt] (boxB) {};
\end{scope}
\begin{scope}[on background layer]
  \node[groupbox, fit=(step4)(step5)(step6)(step7), inner sep=7pt] (boxA) {};
\end{scope}
\begin{scope}[on background layer]
  \node[groupbox, fit=(step8), inner sep=7pt] (boxC) {};
\end{scope}

% --- Box labels ---
\node[font=\scriptsize, anchor=south] at ($(boxB.north)+(-0.025,0)$)
  {Optimization-based cell average limiter};
\node[font=\scriptsize, anchor=south] at ($(boxA.north)+(0.0,0)$)
  {minmod limiter};
\node[font=\scriptsize, anchor=south] at ($(boxC.north)+(0.0,0)$)
  {Zhang-Shu scaling limiter};

% --- Arrows ---
% Init -> Step 1: down then left into step1.north
\coordinate (initdrop) at ($(init.south) + (0,-0.6)$);
\coordinate (step1top)  at ($(step1.north) + (0, 0.9)$);
\draw[arrow] (init.south) -- (initdrop) -- (step1top) -- (step1.north);

\draw[arrow] (step1) -- (step2);
\draw[arrow] (step2) -- (step3);

% Step 3 -> Step 4: right out of step4.east, up, right, into step4.west
\draw[arrow] (step3.east)
  -- ++(0.7, 0)
  -- ++(0, 1.05)
  -- ($(step4.west) + (-0.8, 0)$)
  -- (step4.west);

\draw[arrow] (step4) -- (step5);
\draw[arrow] (step5) -- (step6);
\draw[arrow] (step6) -- (step7);

% Step 8 -> Step 9: down then left into step9.north
\coordinate (s7drop) at ($(step7.west) + (-0.6,0)$);
\coordinate (s8top)  at ($(step8.east) + (0.9, 0)$);
\draw[arrow] (step7.west) -- (s7drop) -- (s8top) -- (step8.east);

\coordinate (s8drop) at ($(step8.south)+(0,-0.6)$);
\coordinate (s9top) at ($(step9.north)+(0,0.5)$);
\draw[arrow] (step8.south) -- (s8drop) -- (s9top) -- (step9.north);

\end{tikzpicture}%
}
\caption{Flowchart illustrating the bound-preserving anti-oscillation postprocessing procedure.}
\label{fig:algorithm-flowchart}
\end{figure}

\section{Postprocessing procedure on non-matching meshes}\label{nonmatching-limiter}

In this section, we extend the discussion of Section~\ref{limiter} to non-matching meshes. Non-matching meshes arise in local mesh refinement, adaptive simulations, and multiscale problems.
Extending the limiter to non-matching meshes is essential for porous-media simulations. While the optimization-based bound-preserving limiter and its accuracy analysis have previously been studied on uniform meshes~\citep{liu2024optimization,liu2024simple,liu2025efficient,liu2025optimization}, their extension to non-matching meshes is nontrivial due to the irregular mesh connectivity and varying cell sizes. We therefore reformulate both the optimization-based bound-preserving limiter and the minmod limiter to operate consistently on non-matching meshes. 
In addition, we establish an accuracy result for the optimization-based cell average limiter on non-matching meshes. 
\par
In this paper, we do not assume the dynamic mesh adaptivity; instead we focus on the given non-matching mesh and the implementation of the limiters, which we present in the following subsections.

\subsection{Optimization-based bound-preserving limiter on non-matching meshes}
Suppose that the mesh partition $\cT_h = \{E_i\}_{i=1}^N$ is not uniform. Let us recall the numerical setups of the optimization problem in Section \ref{BPlimiter-uniform}. Denote $S_w^{k+1}$ as the numerical approximation of $s_w(\vx,t^{k+1})$ and $\overline{S}_w^{k+1}$ as a piecewise constant function that $\overline{S}_w^{k+1}|_{E_i} = \frac{1}{|E_i|}\int_{E_i}{S_w^{k+1}} dx$. Then, with prescribed bounds $[m,M],$ we find a minimizer $\overline{\chi}_w$ that satisfies 
\begin{equation}\label{constraints-ch5.1}
\min_{\chi_w}{\|\chi_w-\overline{S}_w^{k+1}\|_{L^2}^2}\ \ \mathrm{subject\ to}\ \ \int_{\Omega}{\chi_w\,dx}=\int_{\Omega}{S_w^{k+1}\,dx}\ \ \mathrm{and}\ m \leq \chi_w\leq M.
\end{equation}
Now, we define a matrix $\vA=[|E_1|,\cdots,|E_N|] \in \R^{1\times N}$ which stores the volume of each cell. Let vector $\omega\in\R^N$ store the cell average of $S^{k+1}_w$, i.e., the $i$-th entry of $\omega$ is
\[\omega_i = \overline{S}_w^{k+1}|_{E_i} = \frac{1}{|E_i|}\int_{E_i}{S_w^{k+1}}\,dx.\]
Note that the total mass $b = \vA\omega$ is 
\begin{equation*}
b = \vA\omega = \sum_{E_i}{|E_i|\omega_i}=\sum_{E_i}{|E_i|\left(\frac{1}{|E_i|}\int_{E_i}S_w^{k+1}\,dx\right)} = \sum_{E_i}{\int_{E_i}S_w^{k+1}\,dx} = \int_{\Omega}S_w^{k+1}\,dx.
\end{equation*}
Additionally, let us define a diagonal matrix $\vD = \mathrm{diag}(\sqrt{|E_1|},\cdots,\sqrt{|E_N|})\in\R^{N\times N}.$ Then, associated with global conservation and bound-preserving constraint, we define sets
\begin{equation*}
\Lambda_1 = \{\vx: \vA\vx=b\}\quad\mathrm{and}\quad 
\Lambda_2 = \{\vx: m\leq x_i\leq M,\ \forall i = 1,\cdots, N\}.
\end{equation*}
Define the indicator function of a set $\Lambda$ as: $\iota_{\Lambda}(\vx)=0$, if $\vx\in\Lambda$, otherwise $\iota_{\Lambda}(\vx)=+\infty$.
We have the following theorem:
\begin{thm}
The model \eqref{constraints-ch5.1} is equivalent to the following minimization problem in matrix-vector form: find a vector $\vx\in\R^N$ such that it solves
\begin{equation}\label{eq:opt_model2}
\min_{\vx\in\R^{N}}{\|\vD(\vx-\omega)\|_2^2} + \iota_{\Lambda_1}(\vx) + \iota_{\Lambda_2}(\vx).
\end{equation}
\end{thm}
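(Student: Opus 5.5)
The plan is to identify the admissible functions in \eqref{constraints-ch5.1} with vectors in $\R^N$ and then check that the objective and the constraints translate exactly. The minimization in \eqref{constraints-ch5.1} is taken over piecewise constant functions $\chi_w$ on $\cT_h$, as in the uniform case of Section~\ref{BPlimiter-uniform}. Every such function is determined by its cell values $x_i\coloneqq\chi_w|_{E_i}$. The map $\chi_w\mapsto\vx=(x_1,\dots,x_N)^T$ is therefore a linear bijection between the space of piecewise constants and $\R^N$, and it suffices to show that objective and constraints agree under this map.

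\textbf{Step 1: the objective.} Since both $\chi_w$ and $\overline{S}_w^{k+1}$ are constant on each cell,
\begin{equation*}
\|\chi_w-\overline{S}_w^{k+1}\|_{L^2}^2=\sum_{i=1}^N\int_{E_i}(x_i-\omega_i)^2\,dx=\sum_{i=1}^N|E_i|(x_i-\omega_i)^2=\|\vD(\vx-\omega)\|_2^2,
\end{equation*}
because $\vD$ is diagonal with entries $\sqrt{|E_i|}$.

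\textbf{Step 2: the conservation constraint.} We have $\int_\Omega\chi_w\,dx=\sum_i|E_i|x_i=\vA\vx$. By the computation preceding the theorem, $\int_\Omega S_w^{k+1}\,dx=\vA\omega=b$. Hence the mass constraint holds exactly when $\vx\in\Lambda_1$.

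\textbf{Step 3: the bound constraint.} The pointwise condition $m\le\chi_w\le M$ holds for a piecewise constant $\chi_w$ exactly when $m\le x_i\le M$ for all $i$, that is, when $\vx\in\Lambda_2$.

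\textbf{Step 4: conclusion.} The constrained problem is then equivalent to minimizing $\|\vD(\vx-\omega)\|_2^2$ over $\Lambda_1\cap\Lambda_2$. By the definition of the indicator functions, this is the same as the unconstrained problem \eqref{eq:opt_model2}: points outside the constraint sets give the value $+\infty$, and points inside give the original objective. Consequently minimizers correspond one-to-one, and so do the optimal values.

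There is no real obstacle here. The only point needing care is to state explicitly that the admissible class consists of piecewise constant functions, since this is what makes the $L^2$ norm collapse to the weighted Euclidean norm given by $\vD$. One can also remark that $\Lambda_1\cap\Lambda_2\neq\emptyset$ whenever $m|\Omega|\le b\le M|\Omega|$, so the problem is well posed. This existence remark is not needed for the equivalence itself.
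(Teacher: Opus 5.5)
Your proposal is correct and follows essentially the same route as the paper. Both identify piecewise constants with their cell values, reduce the $L^2$ objective to $\|\vD(\vx-\omega)\|_2^2$, and match the mass and bound constraints to $\vx\in\Lambda_1$ and $\vx\in\Lambda_2$; your explicit bijection and indicator-function step make precise what the paper leaves implicit.
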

\begin{proof}
First, notice that the $i$-th entry of the vector $\omega$ is equal to the cell average of $S_w^{k+1}$ on cell $E_i$, we have
\begin{align*}
\|\chi_w-\overline{S}_w^{k+1}\|_{L^2}^2 
%= \int_\Omega |x_h-\overline{f_h}|^2 \,dx
&= \sum_{E_i}\int_{E_i}|\chi_w-\overline{S}_w^{k+1}|^2 \,dx
= \sum_{E_i}\left|\chi_w|_{E_i}-\overline{S}_w^{k+1}|_{E_i}\right|^2 \int_{E_i} 1 \,dx\\
&= \sum_{i} |E_i|\,|x_i-\omega_i|^2
= \|\vD(\vx-\omega)\|_2^2.
\end{align*}
Thus, seeking a piecewise constant polynomial $\overline{\chi}_w$ to minimize the $\|\chi_w-\overline{S}_w^{k+1}\|_{L^2}^2$ is equivalent to seeking a vector $\vx\in\R^{N}$ to minimize $\|\vD(\vx-\omega)\|_2^2$.
\par
Next, regarding the conservation constraint in the minimization problem \eqref{constraints-ch5.1}, notice $\chi_w$ is piecewise constant, we have
\begin{align*}
\int_\Omega \chi_w\,dx = \int_\Omega S_w^{k+1} \,dx
\quad&\Leftrightarrow\quad
\sum_{E_i} \int_{E_i} \chi_w\,dx = \sum_{E_i} \int_{E_i} S_w^{k+1}\,dx \\
\quad&\Leftrightarrow\quad
\sum_{E_i} \chi_w|_{E_i} \int_{E_i} 1\,dx = \sum_{E_i} |E_i|\Big(\frac{1}{|E_i|}\int_{E_i} S^{k+1}_w\,dx\Big). 
\end{align*}
Again, by definition of the vector $\omega$, we have
\begin{align*}
\sum_{E_i} |E_i|(\chi_w|_{E_i}) = \sum_{E_i} |E_i|(\overline{S}_w^{k+1}|_{E_i}) 
\quad\Leftrightarrow\quad
\sum_{E_i} |E_i|x_i = \sum_{E_i} |E_i|\omega_i.
\end{align*}
Recall the matrix $\vA = [|E_1|, \cdots, |E_N|]$ and the scalar $b = \vA\omega$. We get $\vA\vx = \vA\omega = b$. Thus, finding a piecewise constant polynomial $\overline{\chi}_w$ satisfies the conservation constraint in \eqref{constraints-ch5.1} is equivalent to finding a vector $\vx\in \Lambda_1$.
\par
Finally, by definition of the set $\Lambda_2$, it is straightforward to verify that the bound-preserving constraint in \eqref{constraints-ch5.1} is equivalent to seeking a vector $\vx\in \Lambda_2$.
\end{proof}

The accuracy of the optimization-based limiter is easily understood. Let ${\overline{\vS_w^\mathrm{exact}}}\in\R^N$ denote the vector whose $i$-th entry is the cell average of the exact solution $S_w^\mathrm{exact}$ on cell $E_i$, namely the $i$-th entry of ${\overline{\vS_w^\mathrm{exact}}}$ equals
\begin{align*}%\label{eq:exact_sol_cell_average}
{\overline{S_i^\mathrm{exact}}} = \frac{1}{|E_i|}\int_{E_i} S_w^\mathrm{exact}\,dx.
\end{align*}
Notice, the sets $\Lambda_1$ and $\Lambda_2$ are convex, which implies \eqref{eq:opt_model2} is a strongly convex minimization problem. Thus, it has a unique solution.
Then, we have the nonexpansiveness of the cell average limiter:
\begin{thm}\label{thm:accuracy_1}
Let $\vx^\ast$ be the minimizer of \eqref{eq:opt_model2}. Assume the exact solution satisfies $m \leq S_w^\mathrm{exact} \leq M$ on $\Omega$ and the numerical algorithm is conservative, namely $\int_\Omega S_w^\mathrm{exact}\,dx = \int_\Omega S_w^{k+1}\,dx$. Then
\begin{align*}
\|\vD(\overline{\vS_w^\mathrm{exact}} - \vx^\ast)\|_2 \leq \|\vD(\overline{\vS_w^\mathrm{exact}} - \omega)\|_2.
\end{align*}
\end{thm}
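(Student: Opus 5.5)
The plan is to read \eqref{eq:opt_model2} as a metric projection in a weighted inner product, and then use the standard fact that projections onto closed convex sets are nonexpansive.

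\textbf{Setting up the projection.} Introduce the weighted inner product $\langle \vx,\mathbf{y}\rangle_{\vD} \coloneqq (\vD\vx)^T(\vD\mathbf{y}) = \sum_i |E_i| x_i y_i$ on $\R^N$, with induced norm $\|\vx\|_{\vD} = \|\vD\vx\|_2$. Since $\vD$ is diagonal with positive entries, this is a genuine inner product. In this language, \eqref{eq:opt_model2} asks for the point of $\Lambda \coloneqq \Lambda_1\cap\Lambda_2$ closest to $\omega$ in $\|\cdot\|_{\vD}$. So $\vx^\ast = \mathrm{Proj}^{\vD}_{\Lambda}(\omega)$.

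The set $\Lambda$ is closed and convex, as the intersection of a hyperplane and a box. It is nonempty, and this is where the hypotheses enter: they show that $\overline{\vS_w^\mathrm{exact}}$ lies in $\Lambda$.
\begin{itemize}
\item Each cell average of a function with values in $[m,M]$ lies in $[m,M]$, so $\overline{\vS_w^\mathrm{exact}}\in\Lambda_2$.
\item $\vA\overline{\vS_w^\mathrm{exact}} = \int_\Omega S_w^\mathrm{exact}\,dx = \int_\Omega S_w^{k+1}\,dx = b$, by the same computation as for $b=\vA\omega$ in the preceding theorem, so $\overline{\vS_w^\mathrm{exact}}\in\Lambda_1$.
\end{itemize}

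\textbf{Variational inequality.} The projection is characterized by
\[\langle \omega - \vx^\ast, \mathbf{y} - \vx^\ast\rangle_{\vD} \le 0 \quad \text{for all } \mathbf{y}\in\Lambda.\]
To justify this, I would use the first-order optimality condition of the strongly convex problem $\min_{\mathbf{y}\in\Lambda}\|\mathbf{y}-\omega\|_{\vD}^2$. Concretely, perturb along $\vx^\ast + t(\mathbf{y}-\vx^\ast)$ for $t\in[0,1]$, which stays in $\Lambda$ by convexity, and differentiate at $t=0^+$.

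\textbf{Conclusion.} Set $\mathbf{s} = \overline{\vS_w^\mathrm{exact}}\in\Lambda$ and expand
\[\|\mathbf{s}-\omega\|_{\vD}^2 = \|\mathbf{s}-\vx^\ast\|_{\vD}^2 + \|\vx^\ast-\omega\|_{\vD}^2 + 2\langle \mathbf{s}-\vx^\ast, \vx^\ast-\omega\rangle_{\vD}.\]
Taking $\mathbf{y}=\mathbf{s}$ in the variational inequality shows the cross term is $\ge 0$. Dropping the nonnegative middle term as well gives $\|\mathbf{s}-\vx^\ast\|_{\vD}\le\|\mathbf{s}-\omega\|_{\vD}$, which is the claim.

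There is no serious obstacle. The only point needing care is that the projection must be taken in the $\vD$-weighted norm, not the Euclidean one, so that the variational inequality matches the objective of \eqref{eq:opt_model2} on a non-matching mesh.
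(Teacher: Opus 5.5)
Your proposal is correct and follows essentially the same route as the paper. The paper restricts the objective to the segment $\eta\overline{\vS_w^\mathrm{exact}}+(1-\eta)\vx^\ast$, $\eta\in[0,1]$, and uses minimality at $\eta=0$ to fix the sign of the linear coefficient, which is exactly your variational inequality tested at $\mathbf{y}=\overline{\vS_w^\mathrm{exact}}$; it then uses the same three-term expansion (and your $\vD$-inner-product argument makes the paper's separate treatment of the degenerate case $\|\vD(\overline{\vS_w^\mathrm{exact}}-\vx^\ast)\|_2=0$ unnecessary).
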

\begin{proof}
We first verify $\overline{\vS_w^\mathrm{exact}} \in \Lambda_1 \cap \Lambda_2$.
Since $m \leq S_w^\mathrm{exact} \leq M$ on $\Omega$, the cell averages satisfy $m \leq \overline{S^\mathrm{exact}_i} \leq M$ for all $i$, giving $\overline{\vS_w^\mathrm{exact}} \in \Lambda_2$.
Since the algorithm is conservative, we have
\begin{align*}
\vA\overline{\vS_w^\mathrm{exact}} 
= \sum_{E_i} |E_i|\,\overline{S^\mathrm{exact}_i} 
= \sum_{E_i} \int_{E_i} S_w^\mathrm{exact}\,dx
= \int_\Omega S_w^\mathrm{exact}\,dx
= \int_\Omega S_w^{k+1}\,dx
= b.
\end{align*}
Thus, we get $\overline{\vS_w^\mathrm{exact}} \in \Lambda_1$. Therefore, $\overline{\vS_w^\mathrm{exact}} \in \Lambda_1 \cap \Lambda_2$.
\par
Notice, the sets $\Lambda_1$ and $\Lambda_2$ are convex and closed, giving $\Lambda_1 \cap \Lambda_2$ is a convex closed set. We have $\eta\overline{\vS_w^\mathrm{exact}} + (1-\eta)\vx^\ast \in \Lambda_1 \cap \Lambda_2$ for all $\eta \in [0,1]$. 
Recall $\vD$ is a diagonal matrix. Define
\begin{align*}
\phi(\eta)
&= \|\vD(\omega - (\eta\overline{\vS_w^\mathrm{exact}} + (1-\eta)\vx^\ast))\|_{2}^2 \\
&= \eta^2\|\vD(\overline{\vS_w^\mathrm{exact}} - \vx^\ast)\|_{2}^2
- 2\eta\,(\omega - \vx^\ast)^\mathrm{T} \vD^2(\overline{\vS_w^\mathrm{exact}} - \vx^\ast)
+ \|\vD(\omega - \vx^\ast)\|_{2}^2.
\end{align*}
If $\|\vD(\overline{\vS_w^\mathrm{exact}} - \vx^\ast)\|_{2}^2 = 0$, then $\|\vD(\overline{\vS_w^\mathrm{exact}} - \vx^\ast)\|_{2} \leq \|\vD(\overline{\vS_w^\mathrm{exact}} - \omega)\|_{2}$ automatically holds.
Otherwise, it is obvious that $\phi$ is a quadratic function with respect to $\eta$.
Since $\vx^\ast$ minimizes $\|\vD(\vx - \omega)\|_{2}^2$ over $\Lambda_1 \cap \Lambda_2$, we have $\phi(0) \leq \phi(\eta)$ for all $\eta \in [0,1]$.
Since $\phi$ is a convex parabola in $\eta$ achieving its minimum at $\eta = 0$, its axis of symmetry lies at $\eta^\ast \leq 0$, which gives the sign of the linear coefficient:
\begin{align*}
(\vx^\ast - \omega)^\mathrm{T} \vD^2(\overline{\vS_w^\mathrm{exact}} - \vx^\ast) \geq 0.
\end{align*}
Therefore,
\begin{align*}
\|\vD(\overline{\vS_w^\mathrm{exact}} - \omega)\|_{2}^2
&= \|\vD(\overline{\vS_w^\mathrm{exact}} - \vx^\ast + \vx^\ast - \omega)\|_{2}^2 \\
&= \|\vD(\overline{\vS_w^\mathrm{exact}} - \vx^\ast)\|_{2}^2
+ 2(\vx^\ast - \omega)^\mathrm{T} \vD^2(\overline{\vS_w^\mathrm{exact}} - \vx^\ast)
+ \|\vD(\vx^\ast - \omega)\|_{2}^2 \\
&\geq \|\vD(\overline{\vS_w^\mathrm{exact}} - \vx^\ast)\|_{2}^2.
\end{align*}
We conclude the proof.
\end{proof}
Theorem~\ref{thm:accuracy_1} implies the optimality of the cell average limiter in matrix-vector form and further inspires the optimality of model \eqref{constraints-ch5.1}, namely
\begin{align*}%\label{eq:cell_avg_L2_optimality}
\|\overline{\chi}_w - \overline{S_w^\mathrm{exact}}\|_{L^2} \leq \|\overline{S}_w^{k+1} - \overline{S_w^\mathrm{exact}}\|_{L^2}. 
\end{align*}
To see this, recall that since $\overline{\chi}_w$ and $\overline{S_w^\mathrm{exact}}$ are both piecewise constant polynomials, we have
\begin{align*}%\label{eq:vec_to_fun_2norm}
\|\vD(\overline{\vS_w^\mathrm{exact}} - \vx^\ast)\|_2^2
= \sum_{E_i} |E_i|\,|\overline{S^\mathrm{exact}_i} - x_i^\ast|^2
= \sum_{E_i} \int_{E_i} |\overline{S^\mathrm{exact}} - \overline{\chi}_w|^2\,dx
= \|\overline{\chi}_w - \overline{S_w^\mathrm{exact}}\|_{L^2}^2,
\end{align*}
and similarly $\|\vD(\overline{\vS_w^\mathrm{exact}} - \omega)\|_2^2 = \|\overline{S}_w^{k+1} - \overline{S_w^\mathrm{exact}}\|_{L^2}^2$.
\par
We consider the three-operator splitting method for solving \eqref{constraints-ch5.1}. This method was proven to be convergent by Davis and Yin~\citep{davis2017three}. The Davis-Yin (DY) splitting method can be written into matrix-vector form, which is described as follows: set $z^0=\omega$, $b=\vA\omega$, and $l=0$. With appropriate tolerance $\varepsilon$,

\medskip
\textbf{Step 1.} Compute $x^{l+1/2} = S(z^l)$

\medskip
\textbf{Step 2.} Compute $y^{l} = 2x^{l+1/2}-z^l-\frac{1}{\max_{i}{(|E_i|)}}A^T\circ\  (x^{k+1/2}-\omega)$, where $\circ$ denotes the Hadamard product. 

\medskip
\textbf{Step 3.} Compute $x^{l+1} = y^l + \frac{b-\sum_i{|E_i|y_i^l}}{\sum_i{|E_i|^2}}\vA^T$

\medskip
\textbf{Step 4.} Compute $z^{l+1} = z^{l} + x^{l+1} - x^{l+1/2}$

\medskip
\textbf{Step 5.} If $h\|z^{l+1} - z^l\|_2 < \varepsilon$, terminate and output $\overline{\chi}_w = x^{l+1/2}$. Otherwise, set $l \leftarrow l+1$ and go to \textbf{Step 1}.

Similar to the uniform mesh case, we apply Zhang-Shu limiter \eqref{zslimiter} to eliminate undershoot and/or overshoot of the solution after anti-oscillation.

\begin{rmk}
Unlike the uniform mesh case in Section~\ref{BPlimiter-uniform}, the weighted structure of the objective function in \eqref{eq:opt_model2} makes it difficult to determine optimal parameters in DR method.
We therefore employ the DY splitting, which does not require tuning of parameters and naturally handles the weighted norm $\|\vD(\vx-\omega)\|_2$ arising from non-uniform cell volumes.
\end{rmk}

\begin{rmk}
For a uniform mesh, the minimization problem in \eqref{eq:opt_model2} reduces to \eqref{constraints}, i.e., in the matrix-vector form the matrices $\vA$ and $\vD$ reduce to a scalar multiple of an identity.
Theorem~\ref{thm:accuracy_1} still holds for the uniform mesh as a special case of the non-matching mesh~\citep{liu2024structure}.
\end{rmk}

\subsection{Minmod limiter on non-matching meshes}

Now we extend the original minmod limiter to non-matching meshes. For convenience, we reuse the notation for neighboring cells and coefficients introduced in Section \ref{sec:minmod}. Figure~\ref{nonmatching-minmod-stencil} illustrates the stencil employed for limiting in cases where neighboring cells are ~i) coarser, ~ii) finer, or ~iii) at the same refinement level. 

\begin{figure}[ht!]
\centering
\begin{tikzpicture}[scale=1., line join=round, line cap=round]

\definecolor{cellgreen}{RGB}{145,235,145}
\definecolor{cellcyan}{RGB}{175,245,245}

\tikzset{
  mesh/.style={draw=black, line width=0.9pt},
  thickmesh/.style={draw=black, line width=1.6pt},
  ghost/.style={
    draw=black,
    dashed,
    dash pattern=on 4pt off 3pt,
    line width=1.0pt
  },
  ghost2/.style={
    draw=red,
    dashed,
    dash pattern=on 4pt off 3pt,
    line width=1.2pt
  },
  arrow/.style={-latex, line width=1.1pt}
}

\draw[thickmesh] (0,0) rectangle (2,4);

\draw[mesh] (0,1) -- (2,1);
\draw[mesh] (0,2) -- (2,2);
\draw[mesh] (0,3) -- (2,3);
\draw[mesh] (1,0) -- (1,4);

\draw[mesh] (1.5,2) -- (1.5,3);

\fill[cellgreen] (0,1) rectangle (1,2);

\fill[cellcyan] (1,1) rectangle (2,2);

\fill[cellgreen] (1,0) rectangle (2,1);

\fill[cellgreen] (1,2) rectangle (1.5,2.5);
\fill[cellgreen] (1.5,2) rectangle (2,2.5);

\fill[cellgreen] (2.05,0.05) rectangle (3.95,1.95);

\draw[mesh] (1,2.5) -- (2,2.5);
\draw[mesh] (1.5,2) -- (1.5,2.5);

\draw[ghost] (2.05,0.05) -- (3.95,0.05);
\draw[ghost] (2.05,0.05) -- (2.05,1);
\draw[ghost] (3.95,0.05) -- (3.95,1.95);
\draw[ghost] (2.05,1.95) -- (3.95,1.95);
\draw[ghost2] (2.05,1) -- (2.05,1.95);

\draw[thickmesh] (0,0) rectangle (2,4);

\draw[mesh] (0,1) -- (1,1);     
\draw[mesh] (1,1) -- (2,1);
\draw[mesh] (0,2) -- (2,2);
\draw[mesh] (0,3) -- (2,3);

\draw[mesh] (1,0) -- (1,4);
\draw[mesh] (1.5,2.5) -- (1.5,3);

\draw[thickmesh] (2,0) rectangle (4,2);

\coordinate (C) at (1.5,1.5);

\draw[arrow] (1.2,1.5) -- (0.45,1.5);
\node at (0.6, 1.68) {\textbf{~iii)}}; 
\node at (0.6, 1.25){$\mathbf{E_{i,L}}$};
\draw[arrow] (1.8,1.5) -- (2.6,1.5);
\node at (2.45, 1.68) {\textbf{~i)}};
\node at (2.55, 1.25){$\mathbf{E_{i,R}}$};
\draw[arrow] (1.5,1.2) -- (1.5,0.45);
\draw[arrow] (1.5,2.3) -- (1.5,2.45);
\node at (1.68, 2.7) {\textbf{~ii)}};
\draw[arrow,blue] (0.75,2.27)--(1.26,2.27);
\node at (0.7, 2.2) {$\mathbf{E_{i,T_1}}$};
\draw[arrow,blue] (2.37,2.27)--(1.77,2.27);
\node at (2.67,2.22) {$\mathbf{E_{i,T_2}}$};

\draw[arrow] (1.1,1.5) -- (1.25,1.5);
\draw[arrow] (1.9,1.5) -- (1.75,1.5);
\draw[arrow] (1.5,1.1) -- (1.5,1.25);
\draw[arrow] (1.5,1.9) -- (1.5,1.75);

\node at (C) {$\mathbf{E_i}$};

\end{tikzpicture}
\caption{Illustration of a 2D stencil used for limiting when coarser, finer, or equal-level neighbors are present.}
\label{nonmatching-minmod-stencil}
\end{figure}

If the neighboring cells are at the same refinement level as shown in case~iii), we can use the original modal coefficients of the adjacent cell without any modification to apply the limiter, i.e., we can use \eqref{minmod-higher} and \eqref{minmod-lower}. 

When the neighboring cell is coarser than the current cell, as in case~i), the limiter uses the coefficients associated with the corresponding subface of the coarse element, where the subface is defined as the portion of the coarse cell face that intersects the face of the current fine cell. For example, suppose that cell $E_{i,R}$ is coarser than the current cell $E_{i}$. The red dashed line in Figure~\ref{nonmatching-minmod-stencil} represents the edge $e\in \partial E_{i,R}\cap \partial E_{i}$. Then, in \eqref{minmod-higher} and \eqref{minmod-lower}, the coefficients $s_{01}^{i,R}$ and $s_{00}^{i,R}$ are obtained by restricting $\tilde{S}_w^{k+1}$ to the edge $e$, i.e., $\tilde{S}_w^{k+1}|_{e}$. This treatment preserves the consistency of the limiter across hanging-node interfaces. 

Conversely, when the neighboring cells are finer than the current cell, as illustrated in case~ii), fine-cell coefficients are computed by taking the average of the coefficients of all neighboring child cells sharing the interface with the current cell. Suppose that $E_{i,T_1}$ and $E_{i,T_2}$ are two finer cells than the current cell $E_{i}$. Note that we consider the case where the common face of $E_{i}$ is subdivided into two cells. We can define the Legendre coefficients $s^{i,T_{\ast}}_{10}$ and $s^{i,T_{\ast}}_{00}$ on each cell $E_{i,T_1}$ and $E_{i,T_2}$ where $\ast=1,2$ to be the coefficients of $\tilde{S}_{w}^{k+1}|_{E_{i,T_{\ast}}}$. Then, we compute the limiter with \eqref{minmod-higher} and \eqref{minmod-lower} by using the value $s_{10}^{i,T} = \frac{1}{2}(s^{i,T_1}_{10}+s^{i,T_2}_{10})$ and $s_{00}^{i,T} = \frac{1}{2}(s^{i,T_1}_{00}+s^{i,T_2}_{00})$.

\section{Numerical experiments}\label{numerical experiments}
In this section, we present several representative numerical experiments to validate our bound-preserving anti-oscillation approach. 
We first verify, via a manufactured solution test, that the optimization-based limiter does not affect the accuracy of the DG solution. We then examine the behavior of the proposed algorithm under different settings, including inflow-driven two-phase flow in homogeneous and non-homogeneous domains, and flow in a fractured porous medium.
\par
Throughout this section, we refer to the optimization-based cell average limiter as the BP limiter and to the minmod limiter in Figure~\ref{fig:algorithm-flowchart} as the OF limiter.
In all cases, the Zhang-Shu scaling limiter is applied as the final step of the postprocessing procedure.
Each notation in Table~\ref{quantities} denotes the postprocessed saturation solution obtained after the indicated limiters (BP, OF, or BP+OF) and the subsequent Zhang-Shu scaling.
% \begin{table}[ht!]
\begin{center}
\begin{tabular}{l|l}
$S$ & Saturation without any limiter applied \\
$S^{\mathbf{BP}}$& Saturation with BP limiter (followed by Zhang-Shu scaling) applied \\
$S^{\mathbf{BP+OF}}$& Saturation with BP+OF limiters (followed by Zhang-Shu scaling) applied \\
$S^{\mathbf{BP+OF+D}}$ & Saturation with BP+OF limiters and detector (followed by Zhang-Shu scaling) applied       
\end{tabular}
\captionof{table}{Notations of the numerical solution of the saturation equation under different postprocessing strategies.}
\label{quantities}
\end{center}
% \end{table}

\subsection{Example~1: Convergence test}\label{accuracy}
To verify the accuracy of our algorithm, we consider the two-phase flow problem with a manufactured solution given by
\begin{equation*}
p_w = \cos(t+x-y),\ \ s_w=\sin(t+x-y+1)
\end{equation*}
in the domain $\Omega = (0,1)^2$ with final time $\T = 0.5$. Dirichlet conditions are imposed for pressure on the inflow and outflow boundaries with the above prescribed solution; and we only apply Dirichlet condition on the inflow boundary for the saturation equation. 
The capillary pressure is defined as
\begin{equation*}
p_c(s_w)\coloneqq \frac{B_c}{\sqrt{K_D}}\log(s_w+\varepsilon_s),
\end{equation*}
where $\vK = K_D \vI$ with $K_D = 1$, $B_c = -10^{-4}$, and $\varepsilon_s = 10^{-1}$ to avoid singularity. 
Relative permeabilities are given as a function of $s_w$, 
\begin{equation}\label{eq:relative_permeability}
k_w(s_w)\coloneqq s_w^2 \quad\text{and}\quad
k_n(s_w)\coloneqq (1-s_w)^2.
\end{equation}
In addition, we define the following parameters: $\mu_w = 1$, $\mu_n = 2$, $\rho_w = \rho_n = 1$, $c_w^F = \SI{1e-10}{}$, and $\phi=0.8$. In this test, exact values of $s_w(t^{k})$ and $s_w(t^{k-1})$ are provided to compute each $s_w^{k+1,\ast}$. We choose the penalty parameter $\sigma = 20$ and the tolerance for the splitting method $\varepsilon = \SI{1e-12}{}$. 
For each of the flow and transport equations, we performed five computations on a sequence of uniform meshes, with mesh size $h$ halved at each refinement. The time step count was doubled concomitantly. The five cycles use $50$, $100$, $200$, $400$, and $800$ steps, respectively. 
In each cycle, we measure the error in $L^2$ norm and $H^1$ seminorm of the saturation at final time $\T$ and compute the corresponding convergence rate. We compared the convergence rates for three cases: i) no limiter, ii) with only the {BP} limiter, and iii) with both {BP} and {OF} limiters. On the coarsest mesh, the OF limiter is triggered once, introducing numerical diffusion and consequently increasing the errors in case iii). From the second refinement cycle, however, the errors obtained with BP and OF limiters coincide with those of the no limiter case. As shown in Table~\ref{saturation_cr01}, the optimal convergence rates are recovered, confirming that neither BP limiter nor OF limiter affects the accuracy of the numerical solution.
% The optimal convergence rate is obtained, see Table~\ref{saturation_cr01}, confirming that neither {BP} limiter nor {OF} limiter affects the accuracy. 

%\begin{table}[ht!]
\begin{center}
\begin{tabularx}{0.85\linewidth}{@{~~}l@{\quad}c@{\quad}c@{}C@{\hspace{-0.65cm}}c@{\quad}C@{}c@{~~}} 
\toprule
~ & h & time step $\dt$ & $L^2$ error & rate & $H^1$~seminorm~error & rate\\ 
\midrule 
i) No limiter & $1/2^1$ & $1/2^0\times 10^{-2}$ & 1.003e-2 & --- & 1.207e-1 & ---\\ 
 & $1/2^2$ & $1/2^1\times 10^{-2}$ & 2.404e-3 & 2.061 & 5.655e-2 & 1.094\\ 
 & $1/2^3$ & $1/2^2\times 10^{-2}$ & 5.889e-4 & 2.029 & 2.696e-2 & 1.068\\ 
 & $1/2^4$ & $1/2^3\times 10^{-2}$ & 1.479e-4 & 1.993 & 1.326e-2 & 1.024\\ 
 & $1/2^5$ & $1/2^4\times 10^{-2}$ & 3.838e-5 & 1.946 & 6.737e-3 & 0.977\\ 
\midrule
ii) BP & $1/2^1$ &$1/2^0\times 10^{-2}$ & 1.003e-2 & --- & 1.207e-1 & ---\\ 
 & $1/2^2$ & $1/2^1\times 10^{-2}$ & 2.404e-3 & 2.061 & 5.654e-2 & 1.094\\ 
 & $1/2^3$ & $1/2^2\times 10^{-2}$ & 5.889e-4 & 2.030 & 2.696e-2 & 1.068\\ 
 & $1/2^4$ & $1/2^3\times 10^{-2}$ & 1.479e-4 & 1.993 & 1.326e-2 & 1.024\\ 
 & $1/2^5$ & $1/2^4\times 10^{-2}$ & 3.837e-5 & 1.947 & 6.734e-3 & 0.978\\
\midrule
iii) BP+OF & $1/2^1$ &$1/2^0\times 10^{-2}$ & 2.240e-2 & --- & 2.405e-1 & ---\\ 
 & $1/2^2$ & $1/2^1\times 10^{-2}$ & 2.404e-3 & 3.220 & 5.654e-2 & 2.089\\ 
 & $1/2^3$ & $1/2^2\times 10^{-2}$ & 5.889e-4 & 2.030 & 2.696e-2 & 1.068\\ 
 & $1/2^4$ & $1/2^3\times 10^{-2}$ & 1.479e-4 & 1.993 & 1.326e-2 & 1.024\\ 
 & $1/2^5$ & $1/2^4\times 10^{-2}$ & 3.837e-5 & 1.947 & 6.734e-3 & 0.978\\
\bottomrule
\end{tabularx}
\captionof{table}{(Example~1) Convergence rates of the saturation approximation without and with limiters on a uniform mesh.}
\label{saturation_cr01}
\end{center}
%\end{table}

Now, let us validate the accuracy of our limiters on non-matching mesh. Comparing the uniform mesh and the non-matching mesh of the first cycle in Figure~\ref{two-meshes}, we denote $h_{\max} = \max_{e\in\cE_h}{h_e}$ and $h_{\min}=\min_{e\in\cE_h}{h_e}$. 

\begin{figure}[ht!]
\centering
\begin{subfigure}{0.45\textwidth}
\centering
\begin{tikzpicture}[scale=3]

  \fill[blue!12] (0,0) rectangle (1,1);

  % interior grid
  \draw[line width=0.35pt] (0.25,0) -- (0.25,1);
  \draw[line width=0.35pt] (0.5,0) -- (0.5,1);
  \draw[line width=0.35pt] (0.75,0) -- (0.75,1);

  \draw[line width=0.35pt] (0,0.25) -- (1,0.25);
  \draw[line width=0.35pt] (0,0.5) -- (1,0.5);
  \draw[line width=0.35pt] (0,0.75) -- (1,0.75);

  % thick boundary
  \draw[line width=1.2pt] (0,0) rectangle (1,1);

\end{tikzpicture}
\caption{Uniform mesh}
\label{ex1-uniform-mesh}
\end{subfigure}
% \hfill
\begin{subfigure}{0.45\textwidth}
\centering
\begin{tikzpicture}[scale=3]

  \fill[blue!12] (0,0) rectangle (1,1);

  % coarse grid
  \foreach \x in {0.25,0.5,0.75}
    \draw[line width=0.35pt] (\x,0) -- (\x,1);

  \foreach \y in {0.25,0.5,0.75}
    \draw[line width=0.35pt] (0,\y) -- (1,\y);

  % refined band
  \draw[line width=0.35pt] (0,0.625) -- (1,0.625);

  % refined vertical lines
  \foreach \x in {0.125,0.375,0.625,0.875}
    \draw[line width=0.35pt] (\x,0.5) -- (\x,0.75);

  % top refinement
  \draw[line width=0.35pt] (0.25,0.875) -- (0.5,0.875);
  \draw[line width=0.35pt] (0.375,0.75) -- (0.375,1);

  \draw[line width=0.35pt] (0.25,0.375) -- (0.5,0.375);
  \draw[line width=0.35pt] (0.375,0.25) -- (0.375,0.5);

  % bottom refinement
  \draw[line width=0.35pt] (0.25,0.125) -- (0.5,0.125);
  \draw[line width=0.35pt] (0.375,0) -- (0.375,0.25);

  % thick boundary
  \draw[line width=1.2pt] (0,0) rectangle (1,1);

\end{tikzpicture}
\caption{Non-matching mesh with $h_{\max}=2h_{\min}$}
\label{ex1-nonmatching-mesh}
\end{subfigure}

\caption{Comparison between a uniform mesh and a locally refined non-matching mesh.}
\label{two-meshes}
\end{figure}

Using the same coefficients as in the previous uniform-mesh test, we evaluate the error in $L^2$ norm and $H^1$ seminorm at final time $\T=0.5$. 
The number of time steps is increased successively to $100$, $200$, $400$, $800$, and $1600$ across the cycles. We compare the convergence rates of the discrete solutions obtained from three cases: i) no limiter, ii) with only {BP} limiter, and iii) with both {BP} and {OF} limiters. Similar to the uniform mesh case, OF limiter is triggered at the first refinement cycle, resulting in a larger convergence rate. However, as we can observe in Table~\ref{saturation_cr02}, the optimal convergence rates are recovered from the second refinement cycle. 
% As shown in Table~\ref{saturation_cr02}, the optimal convergence rates for $\Q_{1}$ space are obtained. This confirms that neither {BP} limiter nor {OF} limiter harms the accuracy of the numerical solutions.

% \begin{table}[ht!]
\begin{center}
\begin{tabularx}{0.9\linewidth}{@{~~}l@{\quad}c@{\quad}c@{\quad}c@{}C@{\hspace{-0.65cm}}c@{\quad}C@{}c@{~~}} 
\toprule
~ & $h_{\mathrm{min}}$ & $h_{\mathrm{max}}$ & time step $\dt$ & $L^2$ error & rate & $H^1$ seminorm error & rate\\ 
\midrule
i) No limiter & $1/2^3$ & $1/2^2$  & $1/2^1\times10^{-2}$ & 9.942e-3 & --- & 1.523e-1 & ---\\ 
 & $1/2^4$ & $1/2^3$ & $1/2^2\times10^{-2}$ & 2.222e-3 & 2.162 & 6.698e-2 & 1.185\\ 
 & $1/2^5$ & $1/2^4$ & $1/2^3\times10^{-2}$ & 5.065e-4 & 2.133 & 2.796e-2 & 1.260\\ 
 & $1/2^6$ & $1/2^5$ & $1/2^4\times10^{-2}$ & 1.237e-4 & 2.034 & 1.261e-2 & 1.149\\ 
 & $1/2^7$ & $1/2^6$ & $1/2^5\times10^{-2}$ & 3.223e-5 & 1.940 & 6.137e-3 & 1.039\\ 
\midrule
ii) BP & $1/2^3$ & $1/2^2$ & $1/2^1\times10^{-2}$ & 9.991e-3 & --- & 1.521e-1 & ---\\ 
 & $1/2^4$ & $1/2^3$ & $1/2^2\times10^{-2}$ & 2.220e-3 & 2.170 & 6.693e-2 & 1.184\\ 
 & $1/2^5$ & $1/2^4$ & $1/2^3\times10^{-2}$ & 5.062e-4 & 2.133 & 2.795e-2 & 1.260\\ 
 & $1/2^6$ & $1/2^5$ & $1/2^4\times10^{-2}$ & 1.236e-4 & 2.034 & 1.260e-2 & 1.150\\ 
 & $1/2^7$ & $1/2^6$ & $1/2^5\times10^{-2}$ & 3.220e-5 & 1.941 & 6.125e-3 & 1.040\\ 
\midrule
iii) {BP+OF} & $1/2^3$ & $1/2^2$ & $1/2^1\times10^{-2}$ & 2.870e-2 & --- & 3.211e-1 & ---\\ 
 & $1/2^4$ & $1/2^3$ & $1/2^2\times10^{-2}$ & 2.220e-3 & 3.692 & 6.693e-2 & 2.263\\ 
 & $1/2^5$ & $1/2^4$ & $1/2^3\times10^{-2}$ & 5.062e-4 & 2.133 & 2.795e-2 & 1.260\\ 
 & $1/2^6$ & $1/2^5$ & $1/2^4\times10^{-2}$ & 1.236e-4 & 2.034 & 1.260e-2 & 1.150\\ 
 & $1/2^7$ & $1/2^6$ & $1/2^5\times10^{-2}$ & 3.220e-5 & 1.941 & 6.125e-3 & 1.040\\ 
\bottomrule
\end{tabularx}
\captionof{table}{(Example~1) Convergence rate of the saturation approximation without and with limiters on non-matching mesh.}
\label{saturation_cr02}
\end{center}
% \end{table}

\subsection{Example~2: Inflow-driven two-phase flow}\label{ex2}

We investigate the performance of our proposed algorithm on homogeneous and non-homogeneous domains. The computational domains, boundary conditions, and initial conditions are illustrated in Figure~\ref{ex2-setup}. In both cases, the wetting-phase pressure is prescribed on the left and right boundaries to drive the flow from left to right, while the wetting phase saturation is imposed at the inflow boundary. The homogeneous case considers a constant permeability throughout the domain, and non-homogeneous case contains a low-permeability box embedded in a highly permeable porous matrix.

\begin{figure}[ht!]
\centering
\begin{subfigure}{0.45\linewidth}
\scalebox{0.4}
{\begin{tikzpicture}[
    line cap=round,
    line join=round,
    font=\normalsize,
]
 
\def\L{7} 
\fill[domainbg] (0,0) rectangle (\L,\L);
 
\def\bx{2.1}  \def\by{2.1}   
\def\bw{2.8}  \def\bh{2.8}   
 
\draw[labelblue, line width=2pt] (0,0) -- (0,\L);
\draw[labelblue, line width=2pt] (\L,0) -- (\L,\L);
\draw[red!70, line width=2pt] (0,0) -- (\L,0);
\draw[red!70, line width=2pt] (0,\L) -- (\L,\L);

\node[below] at (0,    -0.05) {$0$};
\node[below] at (\L,   -0.05) {$1$};
\node[left]  at (-0.05, \L)   {$1$};
 
\node[font=\LARGE, align=center] at (\bx+\bw/2, \by+\bh/2)
      {$\mathbf{K} = 10^{-1}\mathbf{I}$};
 
\foreach \y in {2.5, 3.5, 4.5}{
  \draw[arrowcol, line width=1.4pt, -stealth]
        (-1.8,\y) -- (-0.08,\y);
}

\node[ font=\huge, anchor=east] at (-1.95, 4.2)
      {$s_w = 1$};
\node[ font=\huge, anchor=east] at (-1.95, 3.1)
      {$p_w = 1$};
 
% \node[labelblue, font=\large, anchor=north east, align=right]
%       at (-0.05, 0.4) {$\Gamma_{D}$};

% \node[red!70, font=\large, anchor=north, align=right] at (0.5, 7.6) {$\Gamma_N$};
 
\node[font=\huge, anchor=west] at (\L+0.35, \L/2)
      {$p_w = 0$};
% \node[labelblue, font=\large, anchor=north west, align=left]
%       at (\L+0.05, 0.4) {$\Gamma_{+}$};

% \node[red!70, font=\large, anchor=south, align=right] at (0.5, -0.6) {$\Gamma_N$};
 
\node[font=\LARGE] at (\L/2, -0.65)
      {$\mathbf{u}_N = 0 \text{ on } \Gamma_{N}$};
 
\node[font=\LARGE] at (\L/2, \L+0.65)
      {$\mathbf{u}_N = 0 \text{ on } \Gamma_{N}$};
 
\node[font=\LARGE, align=center] at (\L/2, 1.1) {%
  \begin{tabular}{l}
    $s_w(\mathbf{x},\,t=0) = 0$\\[4pt]
    $p_w(\mathbf{x},\,t=0) = 0$
  \end{tabular}
};

\end{tikzpicture}}
\caption{Computational domain and boundary values of the homogeneous domain}
\label{ex2-1-homogeneous}
\end{subfigure}
\begin{subfigure}{0.45\linewidth}
    \centering
\scalebox{0.4}
{\begin{tikzpicture}[
    line cap=round,
    line join=round,
    font=\normalsize,
]
 
\def\L{7} 
 
\fill[domainbg] (0,0) rectangle (\L,\L);
 
\def\bx{2.8}  \def\by{2.8}  
\def\bw{1.4}  \def\bh{1.4}   
\fill[blockgreenlight] (\bx,\by) rectangle (\bx+\bw,\by+\bh);
\draw[blockgreen, line width=1.6pt] (\bx,\by) rectangle (\bx+\bw,\by+\bh);
 
\draw[labelblue, line width=2pt] (0,0) -- (0,\L);
\draw[labelblue, line width=2pt] (\L,0) -- (\L,\L);
\draw[red!70, line width=2pt] (0,0) -- (\L,0);
\draw[red!70, line width=2pt] (0,\L) -- (\L,\L);

\node[below] at (0,    -0.05) {$0$};
\node[below] at (\L,   -0.05) {$1$};
\node[left]  at (-0.05, \L)   {$1$};
 
\node[font=\LARGE] at (1.4, \L-1.0)
      {$\mathbf{K} = 10^{-1}\mathbf{I}$};
 
\node[font=\LARGE, align=center] at (\bx+\bw/2, \by+\bh/2)
      {$\mathbf{K} = 10^{-3}\mathbf{I}$};
 
\foreach \y in {2.5, 3.5, 4.5}{
  \draw[arrowcol, line width=1.4pt, -stealth]
        (-1.8,\y) -- (-0.08,\y);
}

\node[ font=\huge, anchor=east] at (-1.95, 4.2)
      {$s_w = 1$};
\node[ font=\huge, anchor=east] at (-1.95, 3.1)
      {$p_w = 1$};

% \node[labelblue, font=\large, anchor=north east, align=right]
%       at (-0.05, 0.4) {$\Gamma_{D}$};

% \node[red!70, font=\large, anchor=north, align=right] at (0.5, 7.6) {$\Gamma_N$};
 
\node[font=\huge, anchor=west] at (\L+0.35, \L/2)
      {$p_w = 0$};
% \node[labelblue, font=\large, anchor=north west, align=left]
%       at (\L+0.05, 0.4) {$\Gamma_{D}$};

% \node[red!70, font=\large, anchor=south, align=right] at (0.5, -0.6) {$\Gamma_N$};
 
\node[font=\LARGE] at (\L/2, -0.65)
      {$\mathbf{u}_N = 0 \text{ on } \Gamma_{N}$};
 
\node[font=\LARGE] at (\L/2, \L+0.65)
      {$\mathbf{u}_N = 0 \text{ on } \Gamma_{N}$};
 
\node[font=\LARGE, align=center] at (\L/2, 1.1) {%
  \begin{tabular}{l}
    $s_w(\mathbf{x},\,t=0) = 0$\\[4pt]
    $p_w(\mathbf{x},\,t=0) = 0$
  \end{tabular}
};

\end{tikzpicture}}
\caption{Computational domain and boundary values of the non-homogeneous domain}
\label{ex2-2-heterogeneous}
\end{subfigure}
        \caption{(Example~2) Computational domains, permeability, boundary conditions, and initial conditions for the inflow-driven two-phase flow simulations.}
        \label{ex2-setup}
\end{figure}

\subsubsection{Example~2-1: Homogeneous domain}\label{ex2-1-simulation}

This part presents a homogeneous test case with constant permeability $\vK = K_D\vI$, where $K_D = 10^{-1}$. The physical parameters are set to $\mu_w = 1$, $\mu_n = 2$, $\rho_w = \rho_n = 1$, $c_w^F = \SI{1e-10}{}$, and $\phi=0.2$. Relative permeabilities follow the same definition as in \eqref{eq:relative_permeability} from Section~\ref{accuracy} and the tolerance for the splitting method is $\varepsilon = \SI{1e-12}{}$.. 
We consider a uniform mesh with $h=1/128$ and the time step is chosen to be $\dt = 0.001$ where the final time is $\T=1$. Dirichlet boundary conditions for the pressure are imposed as $p_w(\vx,\cdot)=1$ on $\{x=0\}\times[0,1]$ and $p_w(\vx,\cdot)=0$ on $\{x=1\}\times[0,1]$. The inflow boundary condition for saturation is given as $s_w(\vx,\cdot)=1$ on $\{x=0\}\times[0,1]$, and the outflow boundary condition is chosen on the right boundary $\{x=1\}\times[0,1]$, see Figure~\ref{ex2-1-homogeneous}. The saturation front is expected to propagate from the inlet $\{x=0\}$ toward the outlet $\{x=1\}$, and the saturation field to stay within $[0,1]$.

We illustrate the results that compare the numerical solutions ~i) without any limiter applied, ~ii) with limiter BP applied only, and ~iii) limiter {BP} and {OF} applied. We plot the profiles of the three cases on $\{0 \leq x \leq 1\}\times \{y=0.5\}$ at time $t = 0.25, 0.5$, and $1$ in Figure~\ref{ex2-01}. First, numerical solutions with limiters are in good agreement with the solution without the limiter with respect to front location. Furthermore, we observe that applying only the BP limiter still exhibits oscillations near the propagating fronts comparable to those displayed in the unlimited solution. 
By applying the limiter {OF}, we are able to suppress spurious oscillations present in solutions ~i) and ~ii), thereby yielding satisfactory oscillation-free solution.

\begin{figure}[ht!]
\centering
    \begin{subfigure}{0.3\linewidth}
    \centering
    \includegraphics[trim = 0 10 0 20, clip, width=\textwidth]{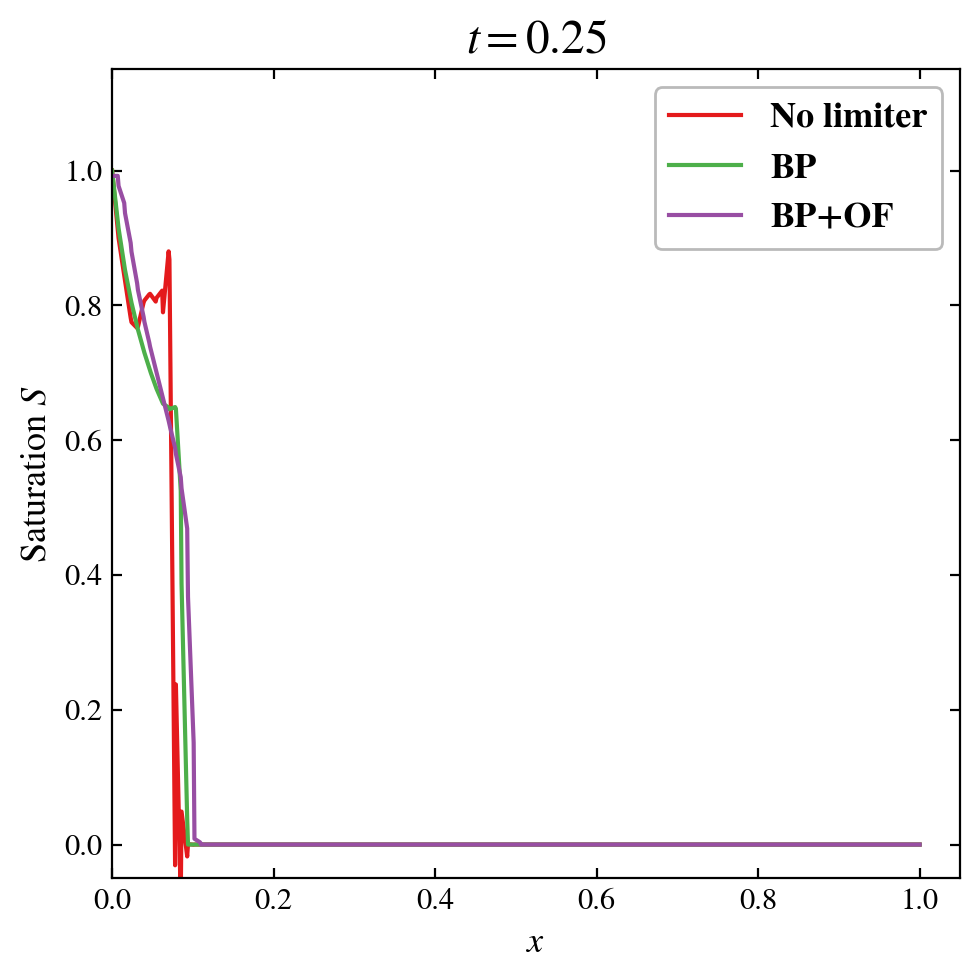}
    \caption{$t=0.25$}    
    \end{subfigure}
    % \hspace{0.2in}
    \begin{subfigure}{0.3\textwidth}
    \centering
    \includegraphics[trim = 0 10 0 20, clip,width=\textwidth]{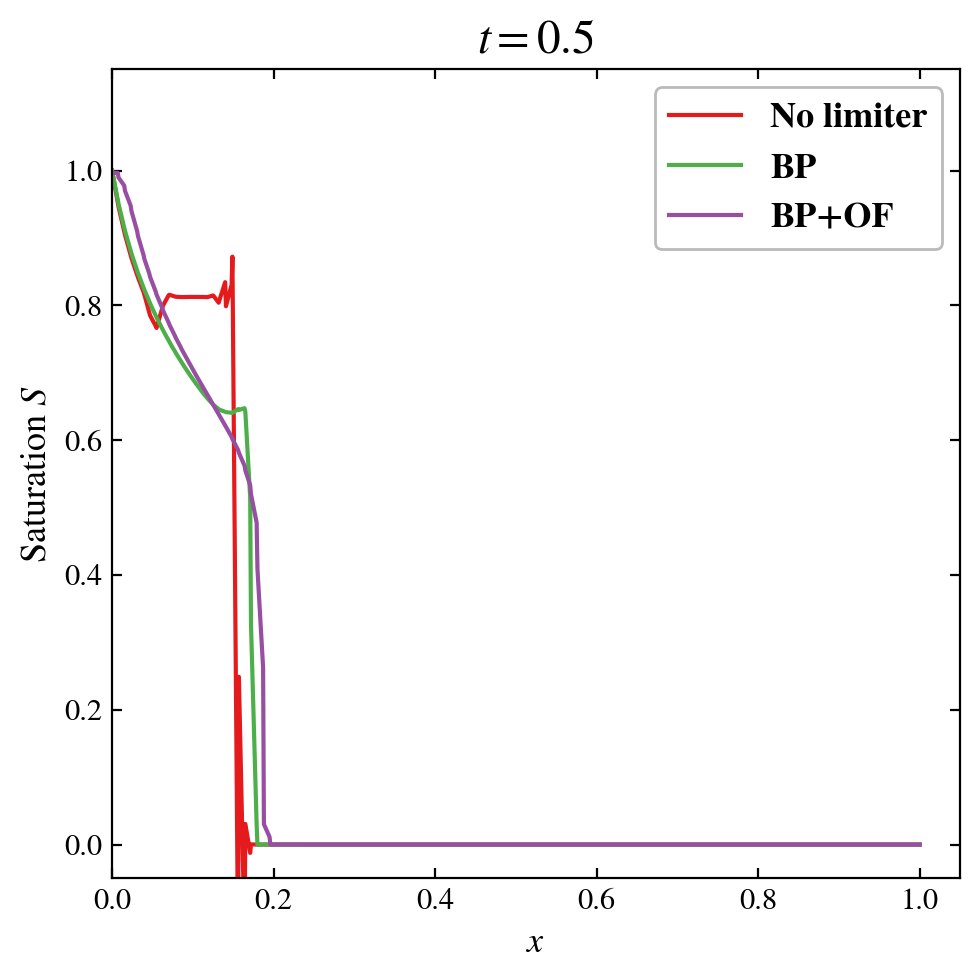}
    \caption{$t=0.5$}    
    \end{subfigure}
    % \vspace{0.2cm}
    % \begin{subfigure}{0.32\textwidth}
    % \centering
    % \includegraphics[trim = 0 10 0 20, clip,width=\textwidth]{01_saturation_ts299.png}
    % \caption{$t=0.75$}        
    % \end{subfigure}
    \begin{subfigure}{0.3\textwidth}
    \centering
    \includegraphics[trim = 0 10 0 20, clip,width=\textwidth]{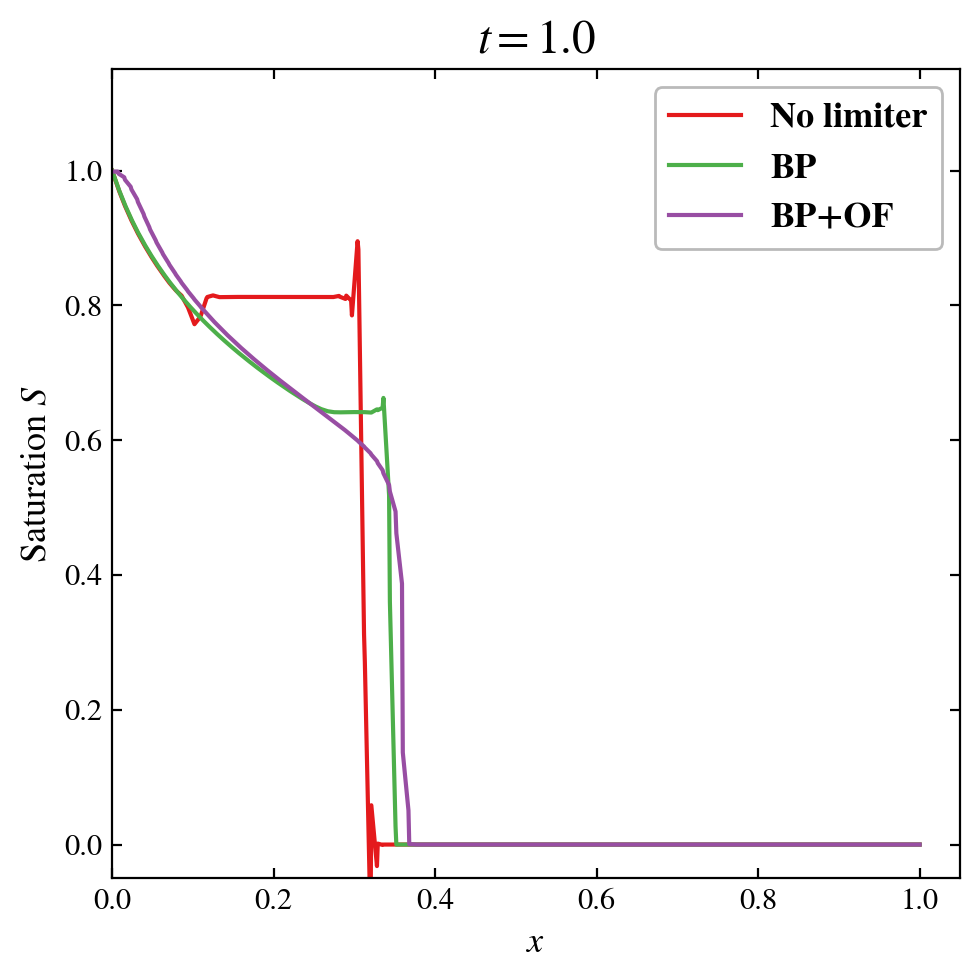}
    \caption{$t=1.0$}            
    \end{subfigure}
    \caption{(Example~2-1) $S$, $\SBP$, $\SOF$ on $\{0\leq x \leq 1\}\times \{y = 0.5\}$ at time $t=0.25$, $0.5$, and $1$.}
    \label{ex2-01}
\end{figure}

Now, we compare the solutions ~i) limiter {OF} and {BP} applied to the entire cells and ~ii) limiter {OF} and {BP} applied only to the troubled cells. In this experiment, since the solution is expected to vary more strongly in the $x$-direction, we modify the smoothness indicator in \eqref{detector} as:
\begin{equation}\label{eq:smooth_indicator_x_direction}
  \delta_{i}=\frac{(s_{10}^{i})^2}{\sum_{l,m=0}^1{(s_{lm}^{i})^2}},  
\end{equation}

where $s_{10}^{i}$ represents the first order coefficient in $x$-direction. We mark the cell as a troubled cell when 
$\delta_{i} \geq \SI{1e-4}{}$.

Before showing the solution profiles, we first illustrate the indicator distribution used to identify troubled cells in Figure~\ref{ex2-indicator}. The cells marked in red correspond to those classified as troubled cells. We now compare this figure with the solution profiles in Figure~\ref{ex2-02}.

\begin{figure}[ht!]
\centering
    \begin{subfigure}{0.3\textwidth}
    \centering
    \includegraphics[trim = 0 0 100 120,clip,width=\textwidth]{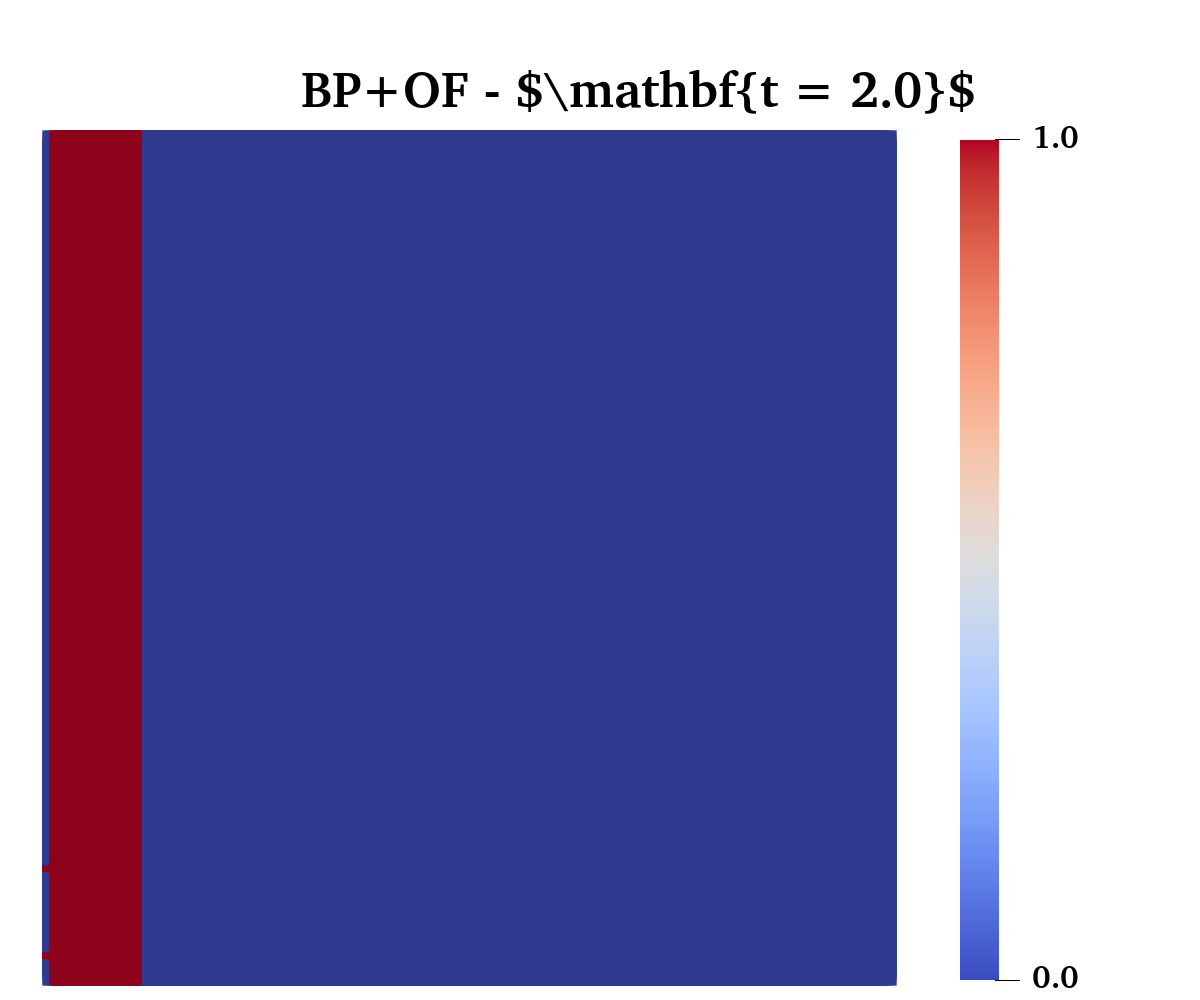}
    \caption{$t=0.25$}    
    \end{subfigure}
    \begin{subfigure}{0.3\textwidth}
    \centering
    \includegraphics[trim = 0 0 100 120,clip,width=\textwidth]{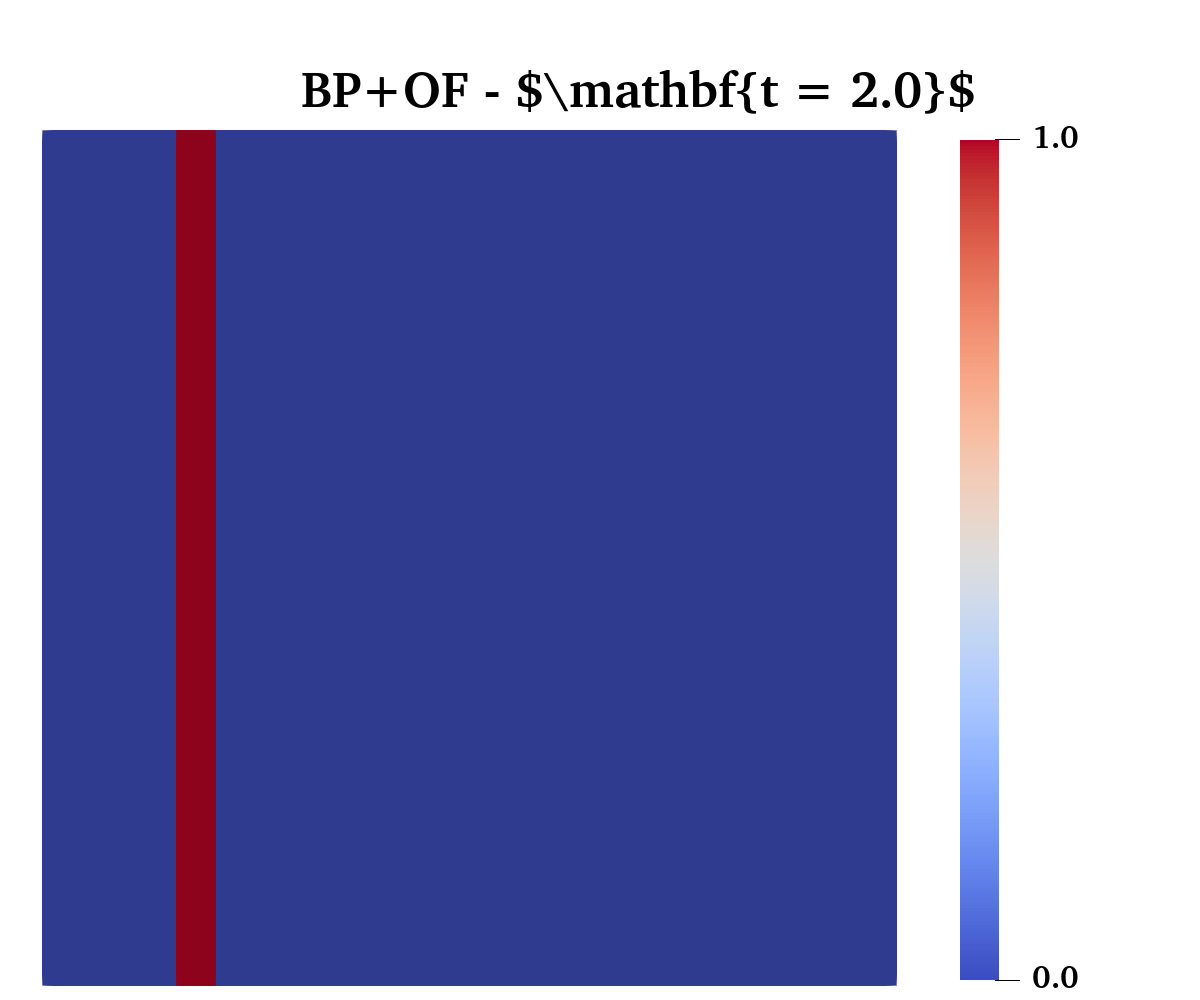}
    \caption{$t=0.5$}            
    \end{subfigure}
    % \vspace{0.2cm}
    % \begin{subfigure}{0.32\textwidth}
    % \centering
    % \includegraphics[trim = 0 0 100 120,clip,width=\textwidth]{indicator_299.png}
    % \caption{$t=0.75$}        
    % \end{subfigure}
    %\hfill
    \begin{subfigure}{0.3\textwidth}
    \centering
    \includegraphics[trim = 0 0 100 120,clip,width=\textwidth]{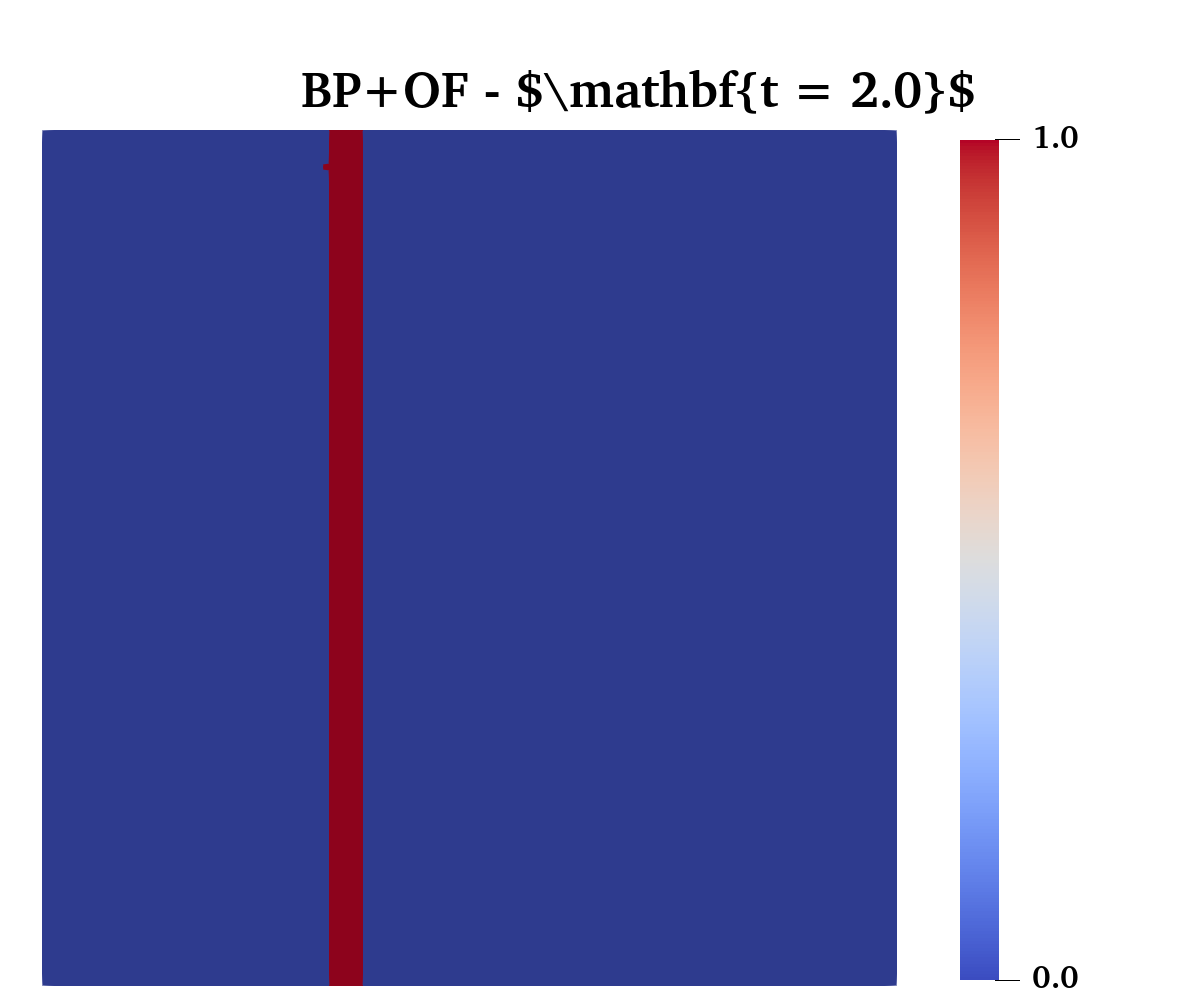}
    \caption{$t=1.0$}        
    \end{subfigure}
    \caption{(Example~2-1) The troubled-cell indicator at time $t=0.25$, $0.5$, and $1$. The red colored regions indicate the cells where the limiter is triggered.}
    \label{ex2-indicator}
\end{figure}

We zoom in on the solution profiles over the interval $x\in[0,0.05]$ to highlight the differences between the two solutions. As shown in Figure~\ref{ex2-indicator}, the troubled-cell indicators shift rightward over time, in close agreement with the propagation of the saturation front. Moreover, from the magnified view of the solution profiles, we observe that the regions not identified as troubled cells exhibit a less diffusive behavior.

\begin{figure}[ht!]
\centering
    \begin{subfigure}{0.3\textwidth}
    \centering
    \includegraphics[trim = 0 10 0 20, clip,width=\textwidth]{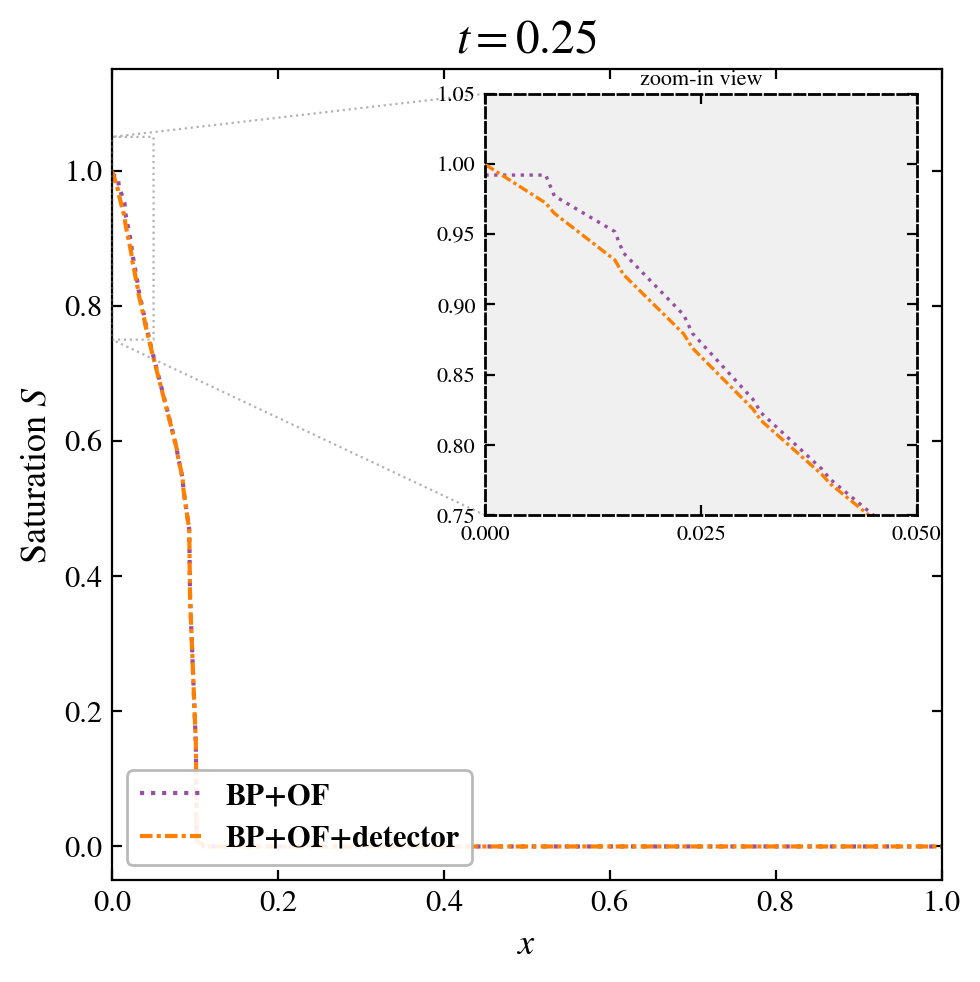}
    \caption{$t=0.25$}
    \end{subfigure}
    \begin{subfigure}{0.3\textwidth}
    \centering
    \includegraphics[trim = 0 10 0 20, clip,width=\textwidth]{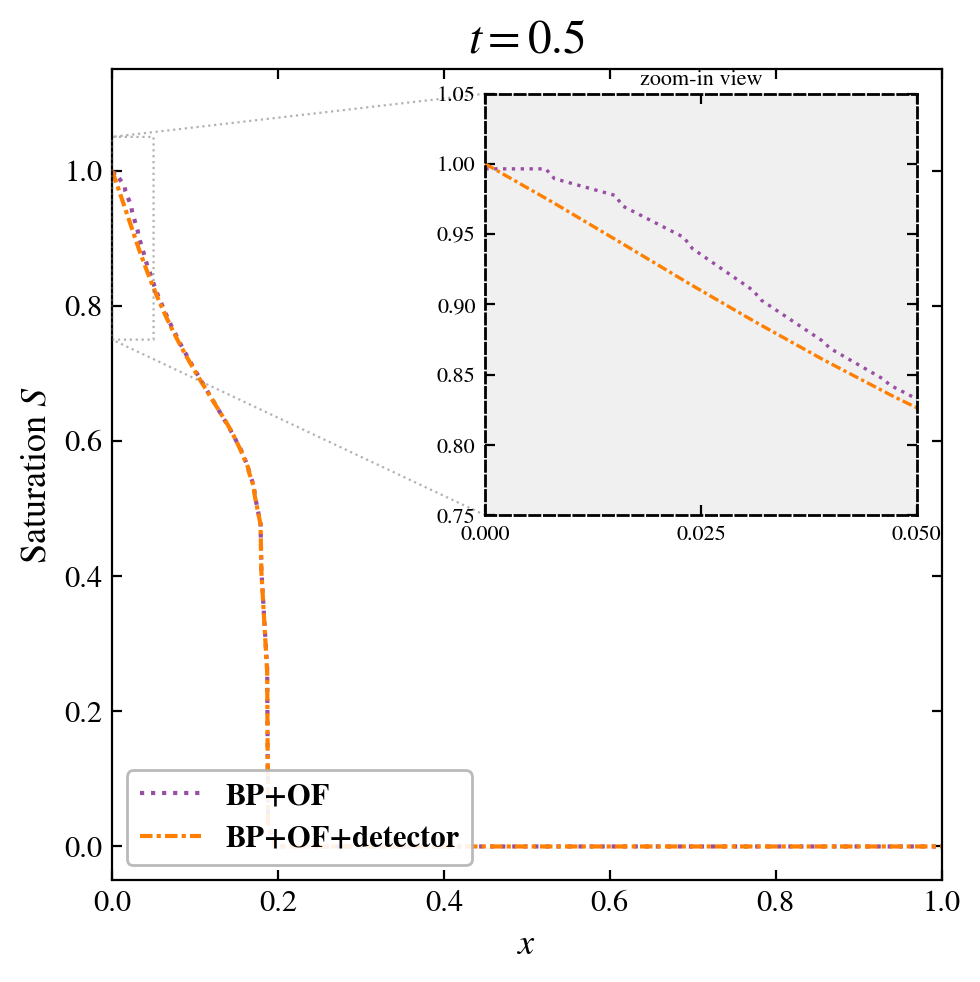}
        \caption{$t=0.5$}
    \end{subfigure}
    % \vspace{0.2cm}
    % \begin{subfigure}{0.35\linewidth}
    % \centering
    % \includegraphics[trim = 0 10 0 20, clip,width=\linewidth]{02_saturation_ts299.png}
    %     \caption{$t=0.75$}
    % \end{subfigure}
    \begin{subfigure}{0.3\textwidth}
    \centering
    \includegraphics[trim = 0 10 0 20, clip,width=\textwidth]{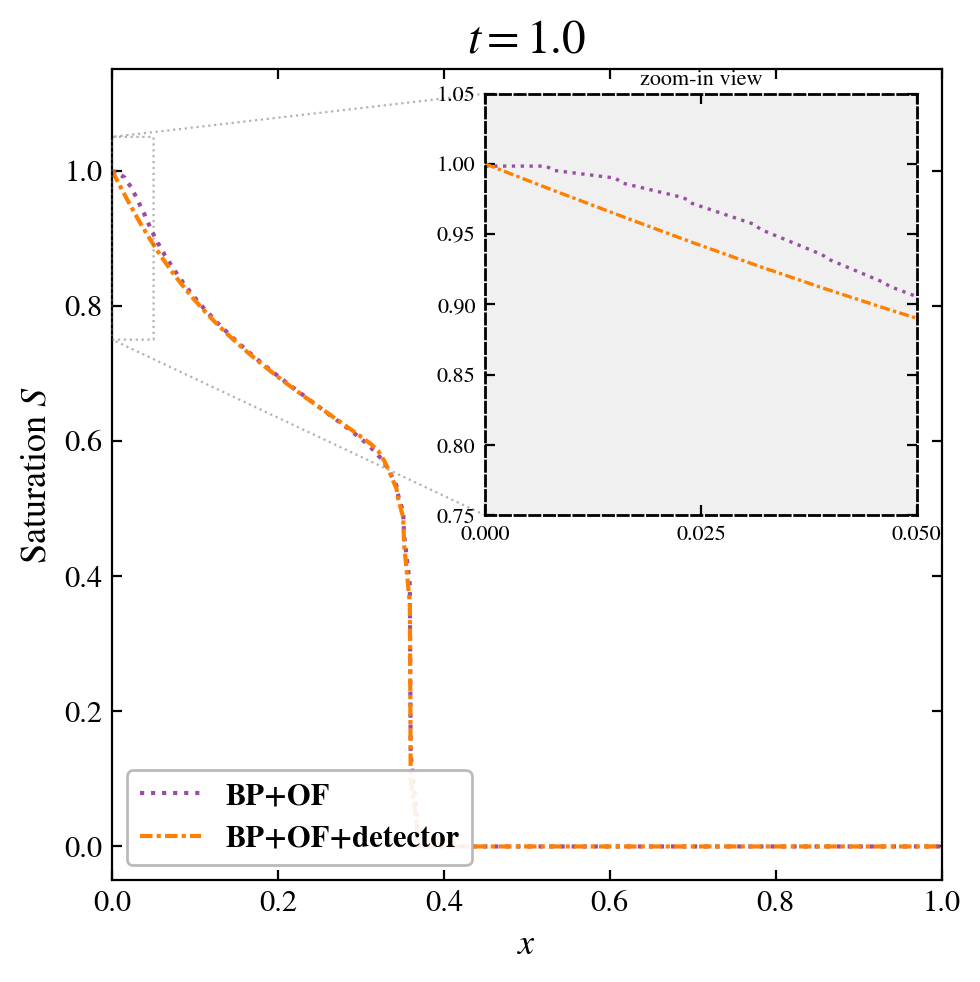}
        \caption{$t=1.0$}
    \end{subfigure}
    \caption{(Example~2-1) $\SOF,\SOFD$ on $\{0\leq x \leq 1\}\times \{y = 0.5\}$ at time $t=0.25$, $0.5$, and $1$. }
    \label{ex2-02}
\end{figure}

We now present the numerical results for a non-matching mesh, using the same physical and numerical parameters as in the uniform-mesh experiment. The non-matching mesh is constructed with local refinement over the regions $[0,1]\times[0.5,0.75]$ and $[0.25,0.5]\times[0,1]$. 

First, we compare the numerical solutions obtained from three different cases: ~i) without applying any limiter, ~ii) applying only the BP limiter, and ~iii) applying both the BP and OF limiters. The solution profiles along $\{0\leq x\leq 1\}\times\{y=0.6\}$ at times $t=0.25$, $0.5$, and $1$ are presented in Figure~\ref{nonmatching-ex2-01}. The results show that the front locations obtained by the limiters match those of the unlimited solution. When only the BP limiter is applied, oscillations are still visible near the sharp saturation front. By additionally applying the OF limiter, these oscillations are effectively controlled, resulting in a more stable and diffusive numerical solution. 

\begin{figure}[ht!]
\centering
    \begin{subfigure}{0.3\linewidth}
    \centering
    \includegraphics[trim = 0 10 0 20, clip,width=\linewidth]{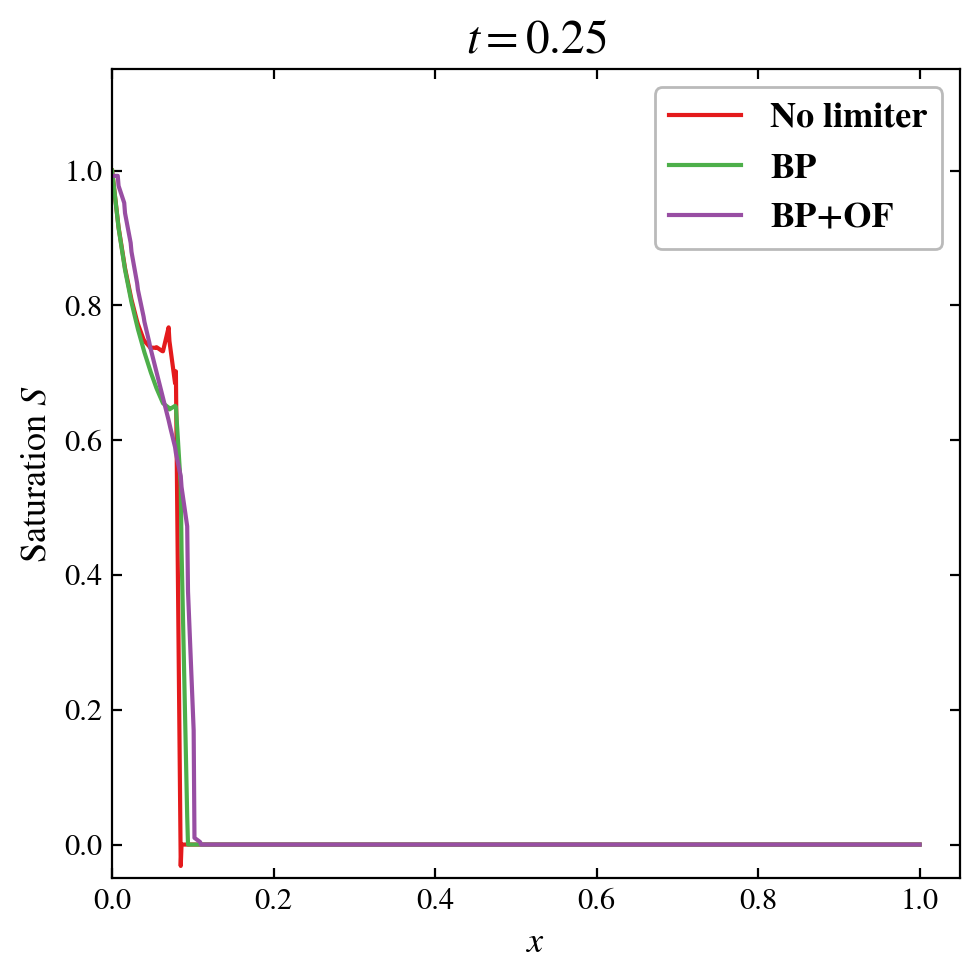}
        \caption{$t=0.25$}
    \end{subfigure}
    \begin{subfigure}{0.3\linewidth}
    \centering
    \includegraphics[trim = 0 10 0 20, clip,width=\linewidth]{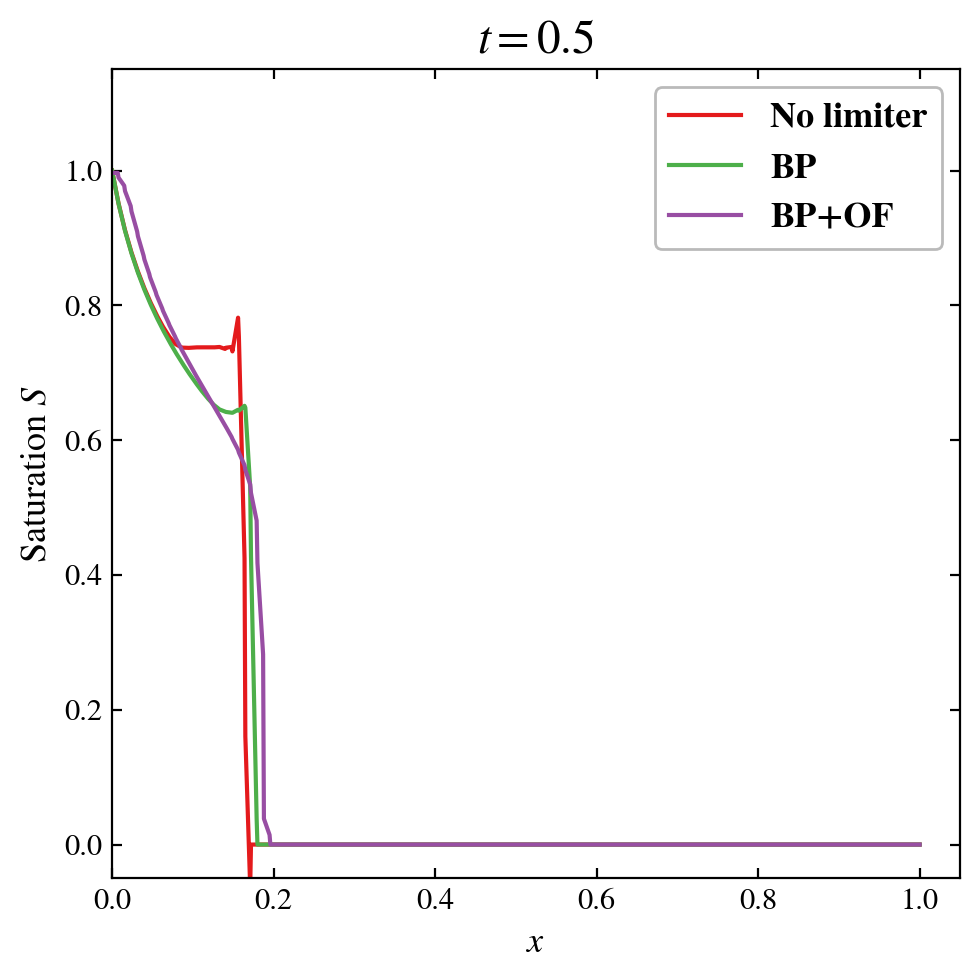}
        \caption{$t=0.5$}
    \end{subfigure}
    % \vspace{0.2cm}
    % \begin{subfigure}{0.32\linewidth}
    % \centering
    % \includegraphics[trim = 0 10 0 20, clip,width=\linewidth]{nonmatching-01_saturation_ts299.png}
    %     \caption{$t=0.75$}
    % \end{subfigure}
    \begin{subfigure}{0.3\linewidth}
    \centering
    \includegraphics[trim = 0 10 0 20, clip,width=\linewidth]{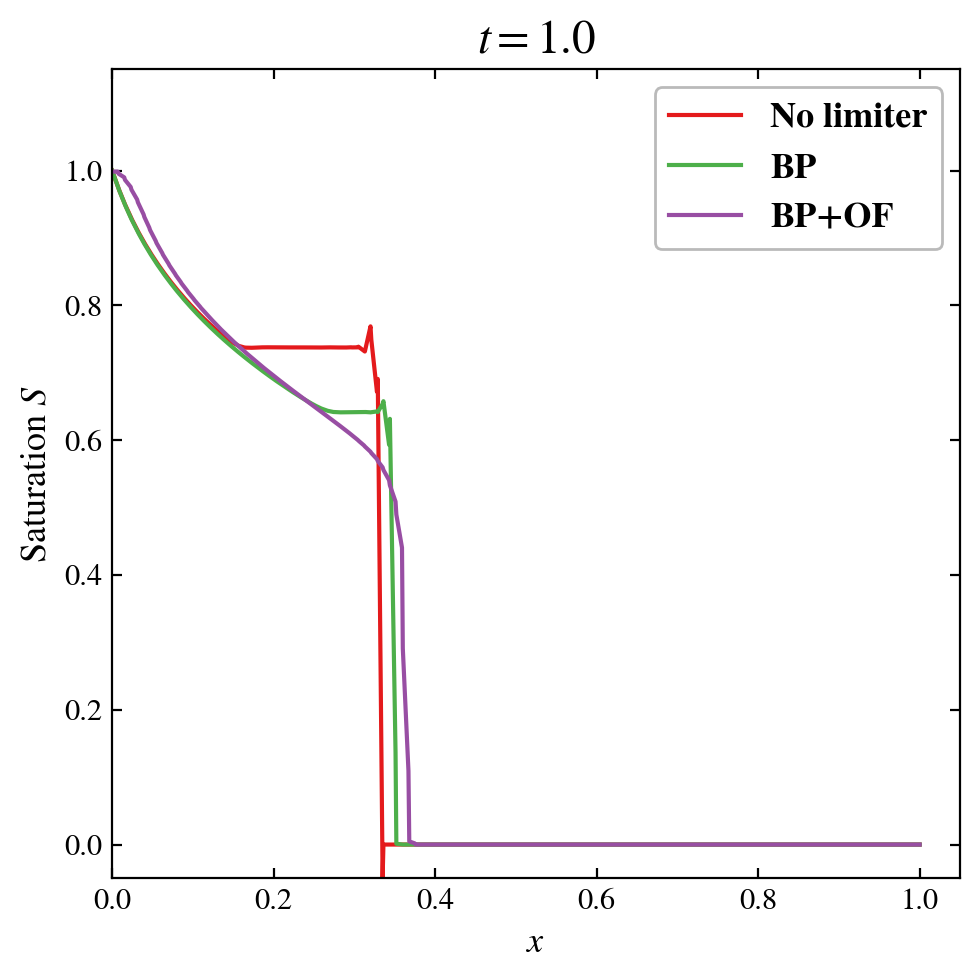}
        \caption{$t=1.0$}
    \end{subfigure}
    \caption{(Example~2-1, Non-matching mesh) $S$, $\SBP$, $\SOF$ on $\{0\leq x \leq 1\}\times \{y = 0.6\}$ at time $t=0.25$, $0.5$, and $1$.}
    \label{nonmatching-ex2-01}
\end{figure}

Next, we compare the numerical solutions obtained by ~i) the OF and BP limiters applied over all cells, and ~ii) the OF and BP limiters applied only to the detected troubled cells on the non-matching mesh. The same definition of the smooth indicator as in the uniform-mesh experiment is employed. A cell is classified as a troubled cell whenever
$\delta_{i}\geq\SI{1e-5}{}$.

We present the distribution of the trouble-cell indicator on the non-matching mesh in Figure~\ref{ex2-nonmatching-indicator}. The cells highlighted in red mark the regions where the limiter is selectively activated.
In the early phase of the simulation, the detector identifies a large number of troubled cells in both the coarse and fine mesh regions. As the solution evolves, the number of troubled cells gradually decreases, particularly within the refined region, indicating that the solution becomes smoother and the anti-oscillation limiting is required only locally near the remaining sharp front. We then compare solutions ~i) and ~ii) in Figure~\ref{nonmatching-ex2-02}.

\begin{figure}[ht!]
\centering
    \begin{subfigure}{0.3\linewidth}
    \centering
    \includegraphics[trim = 0 0 100 120,clip,width=\linewidth]{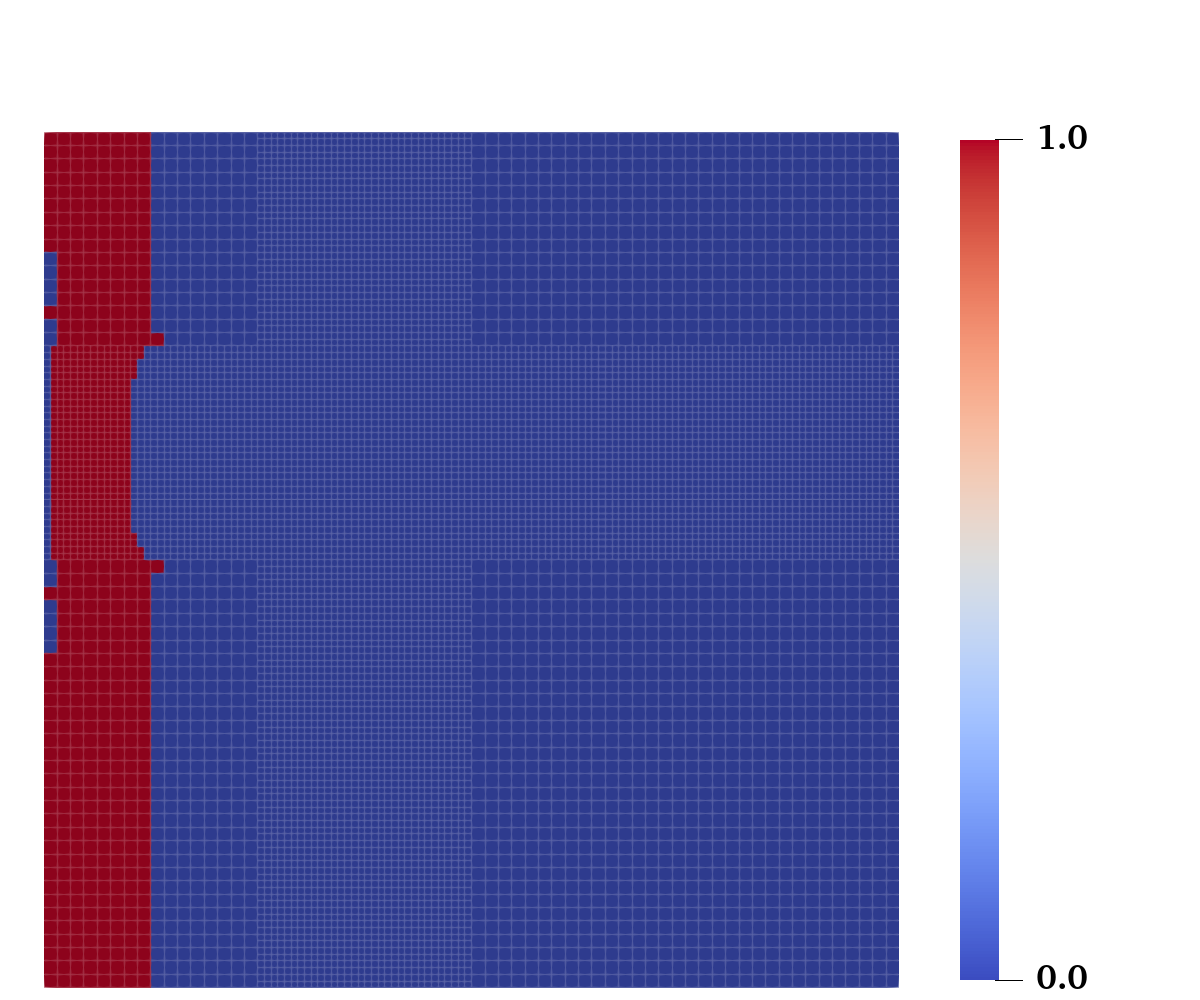}
        \caption{$t=0.25$}
    \end{subfigure}
    \begin{subfigure}{0.3\linewidth}
    \centering
    \includegraphics[trim = 0 0 100 120,clip,width=\linewidth]{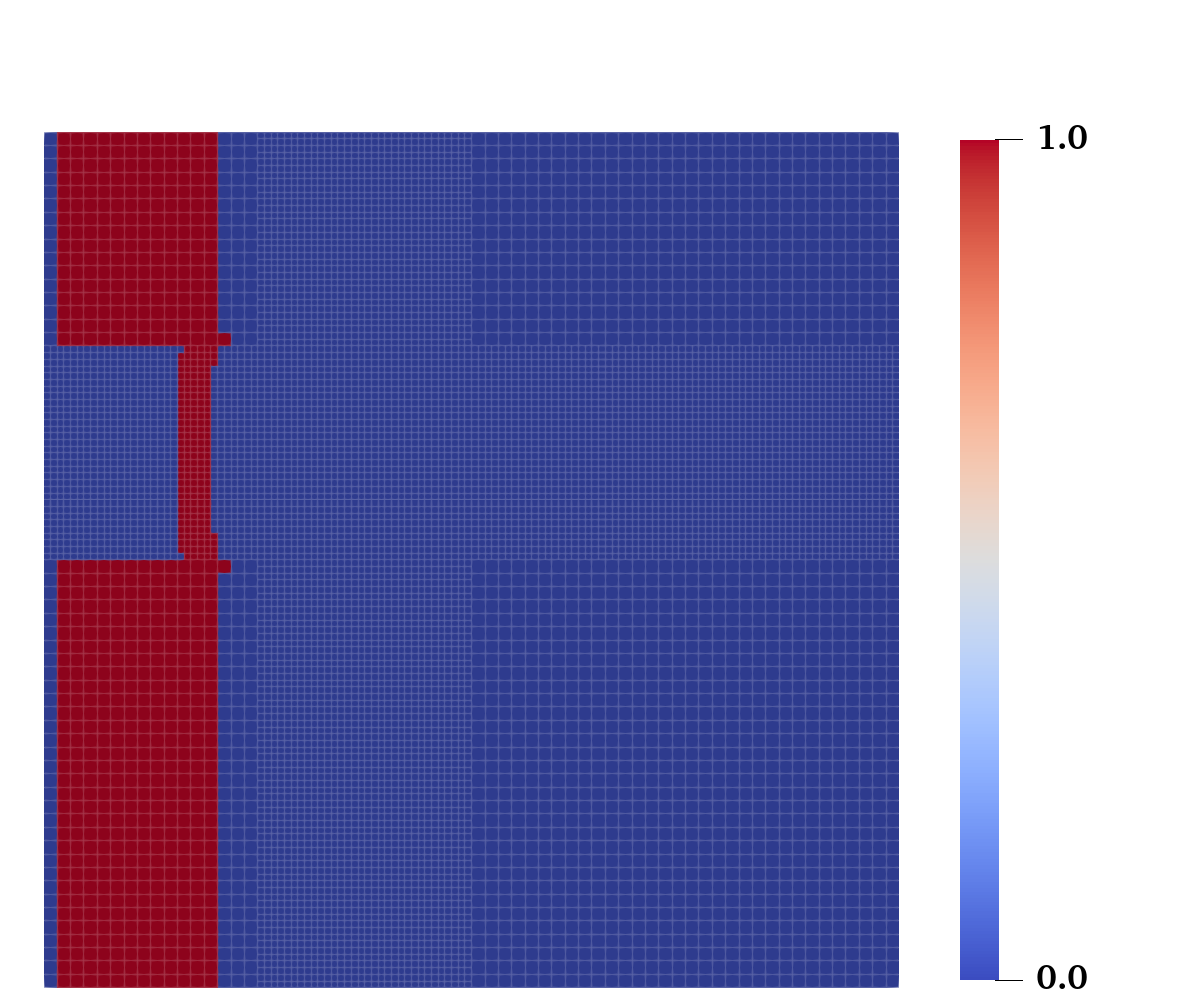}
        \caption{$t=0.5$}
    \end{subfigure}
    % \vspace{0.2cm}
    % \begin{subfigure}{0.32\linewidth}
    % \centering
    % \includegraphics[trim = 0 0 100 120,clip,width=\linewidth]{nonmatching-indicator-wireframe_ts299.png}
    %     \caption{$t=0.75$}
    % \end{subfigure}
    \begin{subfigure}{0.3\linewidth}
    \centering
    \includegraphics[trim = 0 0 100 120,clip,width=\linewidth]{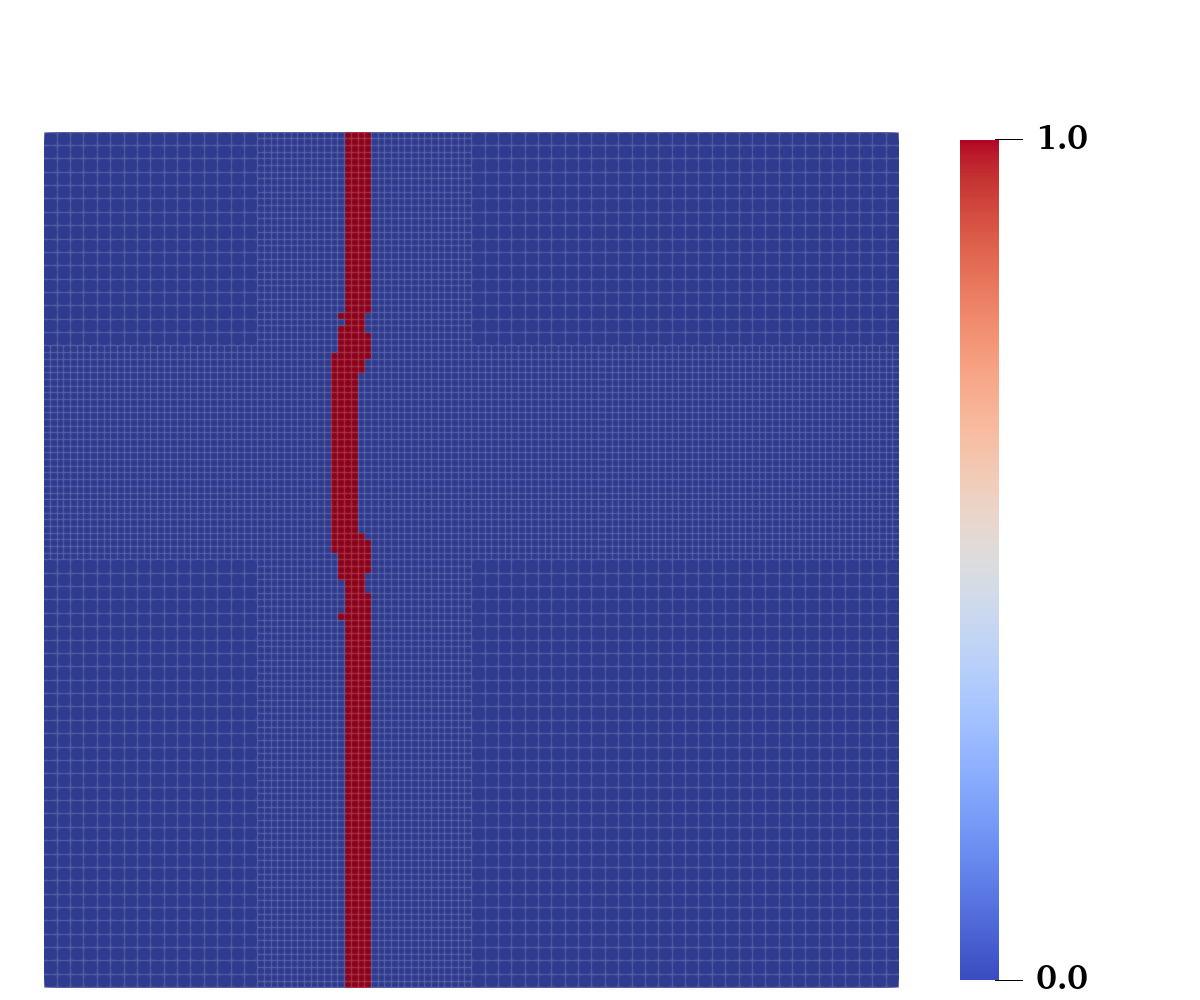}
        \caption{$t=1.0$}
    \end{subfigure}
    \caption{(Example~2-1, Non-matching mesh) Evolution of the troubled-cell indicator at $t = 0.25$, $0.5$, and $1$.}
    \label{ex2-nonmatching-indicator}
\end{figure}

We zoom in on the solution profiles over the interval $x\in[0,0.05]$ to emphasize the differences between ~i) and ~ii). As illustrated in Figure~\ref{ex2-nonmatching-indicator}, the troubled-cell indicator shifts toward the right-hand side as time evolves, especially in the refined region, following the propagation of the sharp saturation front. Furthermore, the magnified solution profiles in
Figure~\ref{nonmatching-ex2-02} demonstrate that the regions not classified
as troubled retain sharper features and exhibit less diffusive behavior,
confirming that the limiter is applied more selectively.

\begin{figure}[ht!]
\centering
    \begin{subfigure}{0.3\linewidth}
    \centering
    \includegraphics[trim = 0 10 0 20, clip,width=\linewidth]{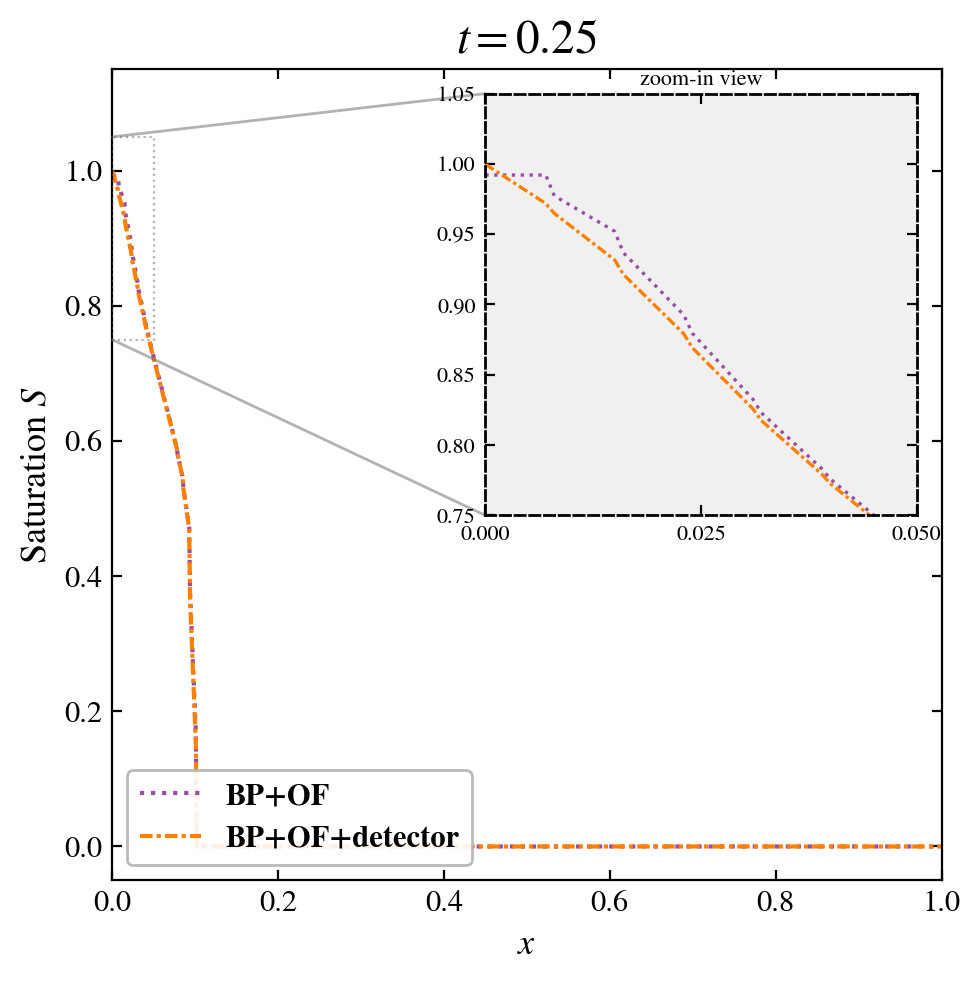}
        \caption{$t=0.25$}
    \end{subfigure}
    \begin{subfigure}{0.3\linewidth}
    \centering
    \includegraphics[trim = 0 10 0 20, clip,width=\linewidth]{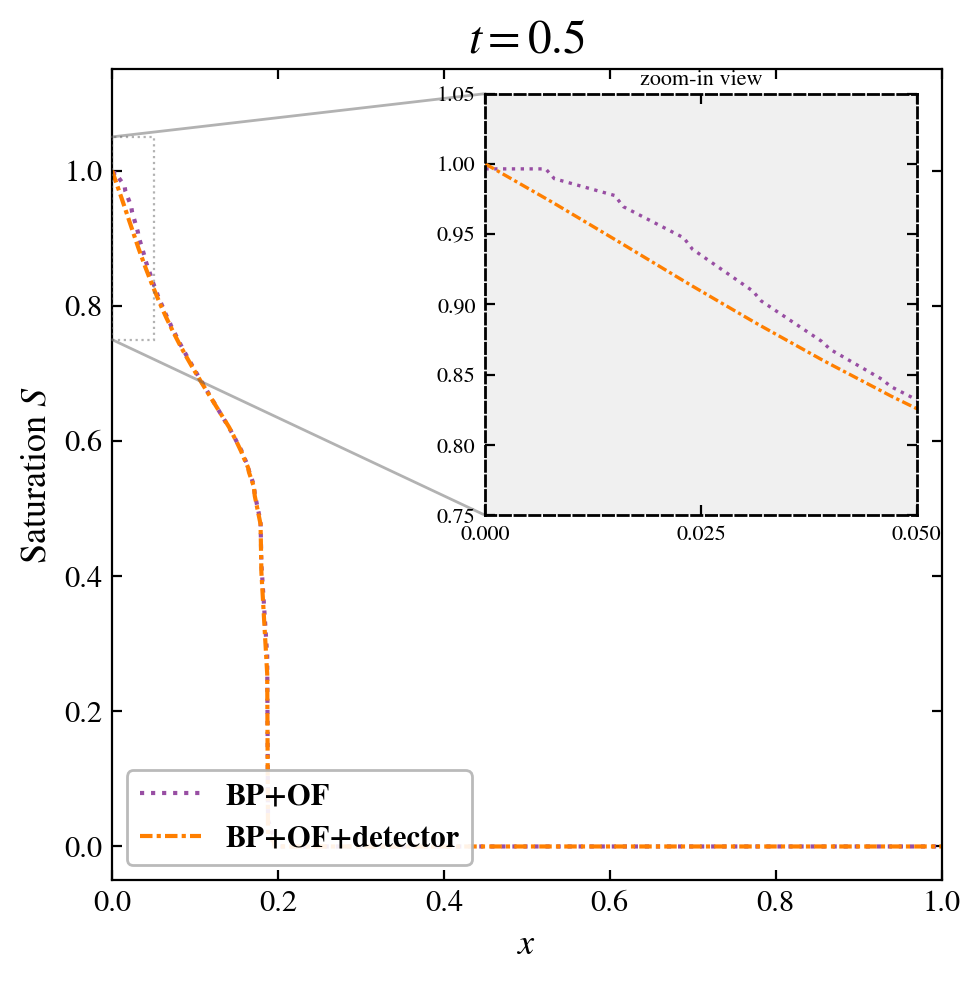}
        \caption{$t=0.5$}
    \end{subfigure}
    % \vspace{0.2cm}    
    % \begin{subfigure}{0.35\linewidth}
    % \centering
    % \includegraphics[trim = 0 10 0 20, clip,width=\linewidth]{nonmatching-02-saturation_ts299.png}
    %     \caption{$t=0.75$}
    % \end{subfigure}
    \begin{subfigure}{0.3\linewidth}
    \centering
    \includegraphics[trim = 0 10 0 20, clip,width=\linewidth]{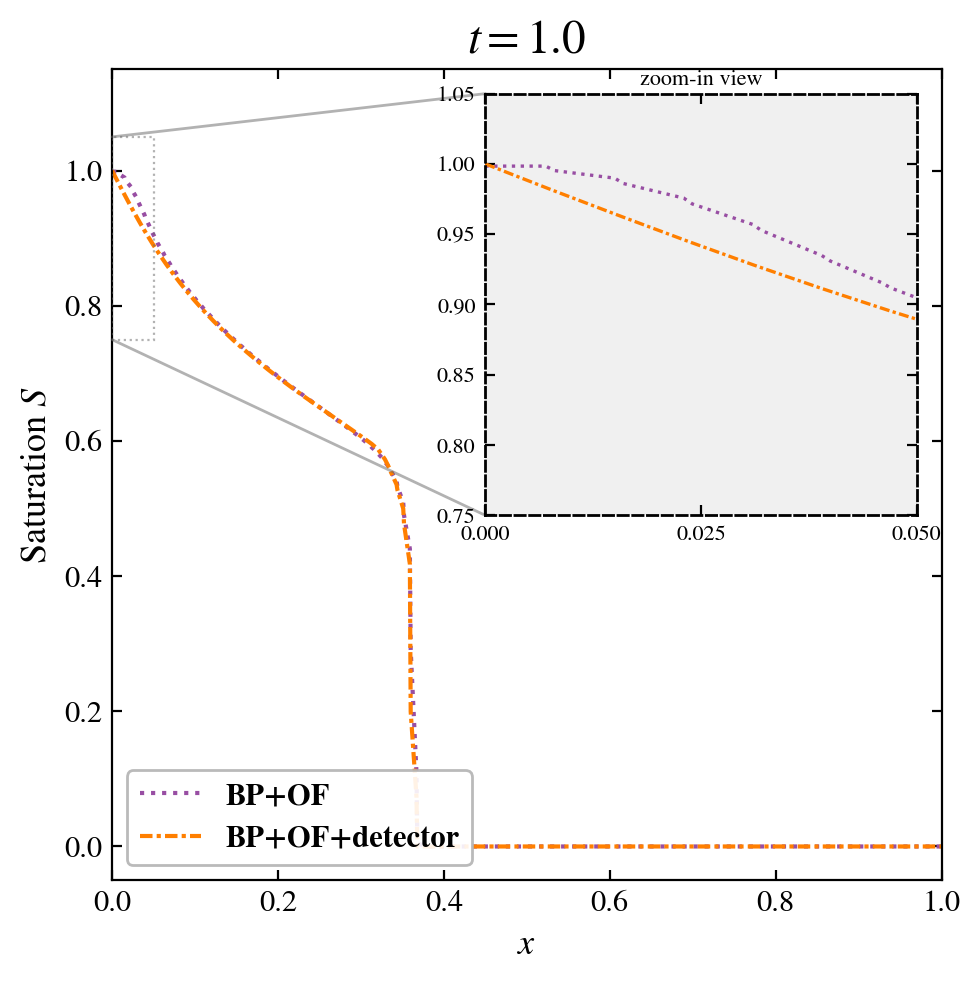}
        \caption{$t=1.0$}
    \end{subfigure}
    \caption{(Example~2-1, Non-matching mesh) $\SOF,\SOFD$ on $\{0\leq x \leq 1\}\times \{y = 0.6\}$ at time $t=0.25$, $0.5$, and $1$. }
    \label{nonmatching-ex2-02}
\end{figure}

\subsubsection{Example~2-2: Non-homogeneous domain}\label{ex2-2-simulation}
In this test, we set permeability $\vK = K_D(\vx)\vI$ where 
\begin{align*}
    K_D(\vx) = 
    \begin{cases}
        \SI{1e-3}{},\ &\ \mathrm{if}\ \vx\in[0.4,0.6]^2,\\
        \SI{1e-1}{},\ &\ \mathrm{else}.
    \end{cases}
\end{align*}
Other physical parameters are defined as $\mu_w = 1$, $\mu_n = 2$, $\rho_w = \rho_n = 1$, $c_w^F=\SI{1e-1}{}$, and $\phi=1$. Relative permeabilities follow the same definition as in \eqref{eq:relative_permeability}. For any $e\in \cE_h$, we denote $K_{+} = K_D|_{E^+}$ and $K_{-}=K_D|_{E^-}$. Then we set the penalty term on each face $\sigma = 20K_e$ where $K_e$ is defined as the harmonic mean of $K_+$ and $K_-$, i.e., $K_e\coloneqq\frac{2K_{+}K_{-}}{K_{+}+K_{-}}$. The tolerance for the splitting method is $\varepsilon = \SI{1e-12}{}$.
We consider a uniform mesh with $h = 1/128$ and the final time is $\T=2$, where the time step is chosen to be $\dt = 0.005$. The initial and boundary conditions are depicted in Figure~\ref{ex2-2-heterogeneous}. 
\par
The necessity of the bound-preserving anti-oscillation postprocessing is demonstrated in Figure~\ref{ex3-01}. Figure~\ref{ex3-01}(a) compares the saturation without limiters and with the BP+OF limiters at $t=1.29$. Without limiting, oscillations and out-of-bounds values can be observed, and the simulation becomes unstable and crashes after this time. In contrast, $\SOF$ remains within the prescribed bounds and effectively suppresses the oscillations. Figure~\ref{ex3-01}(b) shows the stable limited solution $\SOF$ at the final time $\T=2$.

\begin{figure}[ht!]
\centering
\newlength{\singlefigurewidth}
\setlength{\singlefigurewidth}{0.29\linewidth}
    % \begin{subfigure}{0.3\linewidth}
    % \centering
    %     \includegraphics[trim = 0 0 100 120,clip,width=\linewidth]{noAB_saturation_ts199.png}
    %     \caption{Saturation with no limiters ; $t = 1$}
    % \end{subfigure}
    % \begin{subfigure}{0.3\linewidth}
    % \centering
    %     \includegraphics[trim = 0 0 100 120,clip, width=\linewidth]{noindicator_saturation_ts199.png}
    %     \caption{Saturation with BP+OF ; $t = 1$}
    % \end{subfigure}\\
    % \vspace{0.2cm}
    \begin{subfigure}{0.6\linewidth}
    \centering
        \includegraphics[trim = 0 0 100 120,clip,width=\singlefigurewidth]{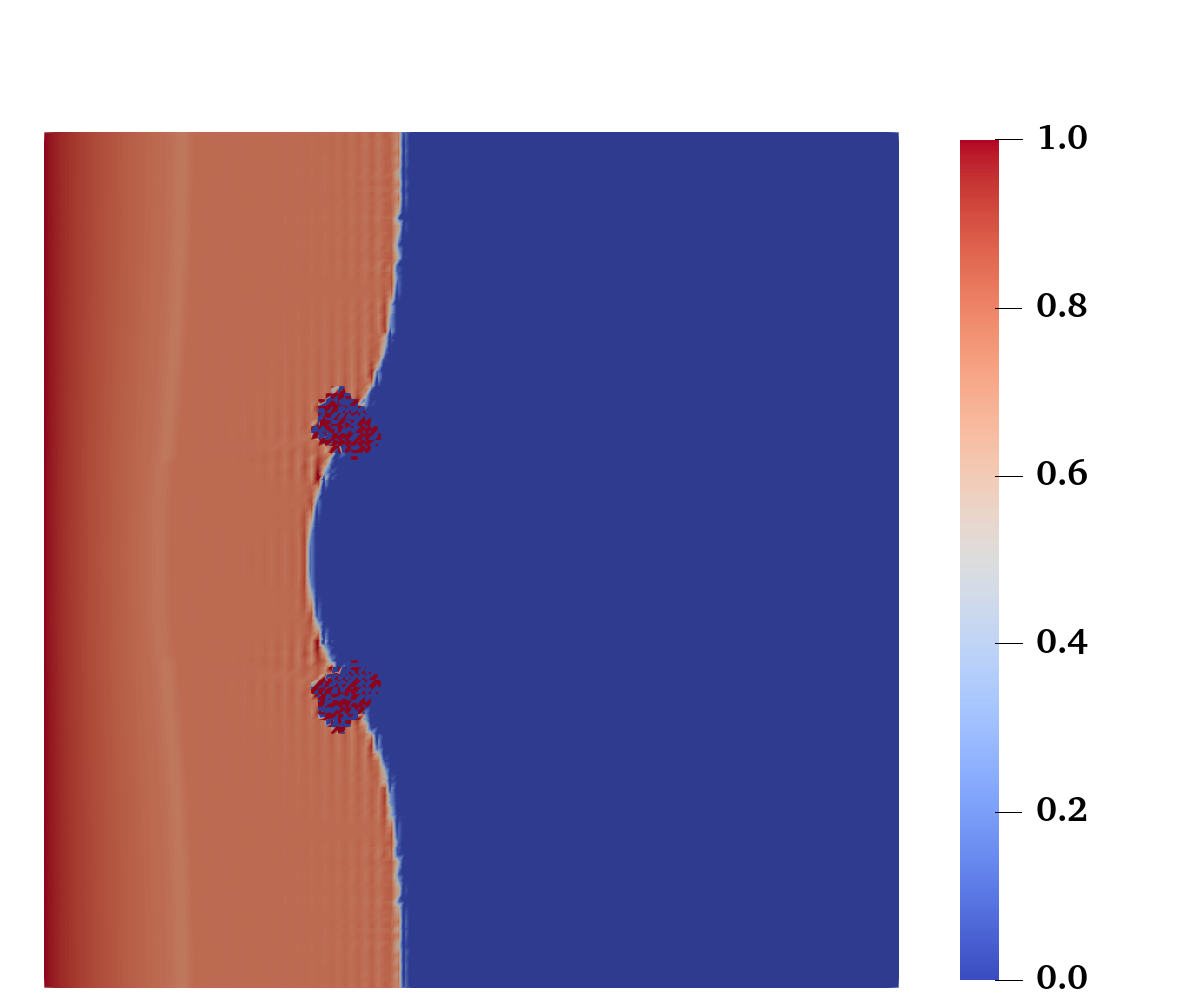}
        \hfill
        \includegraphics[trim = 0 0 100 120,clip,width=\singlefigurewidth]{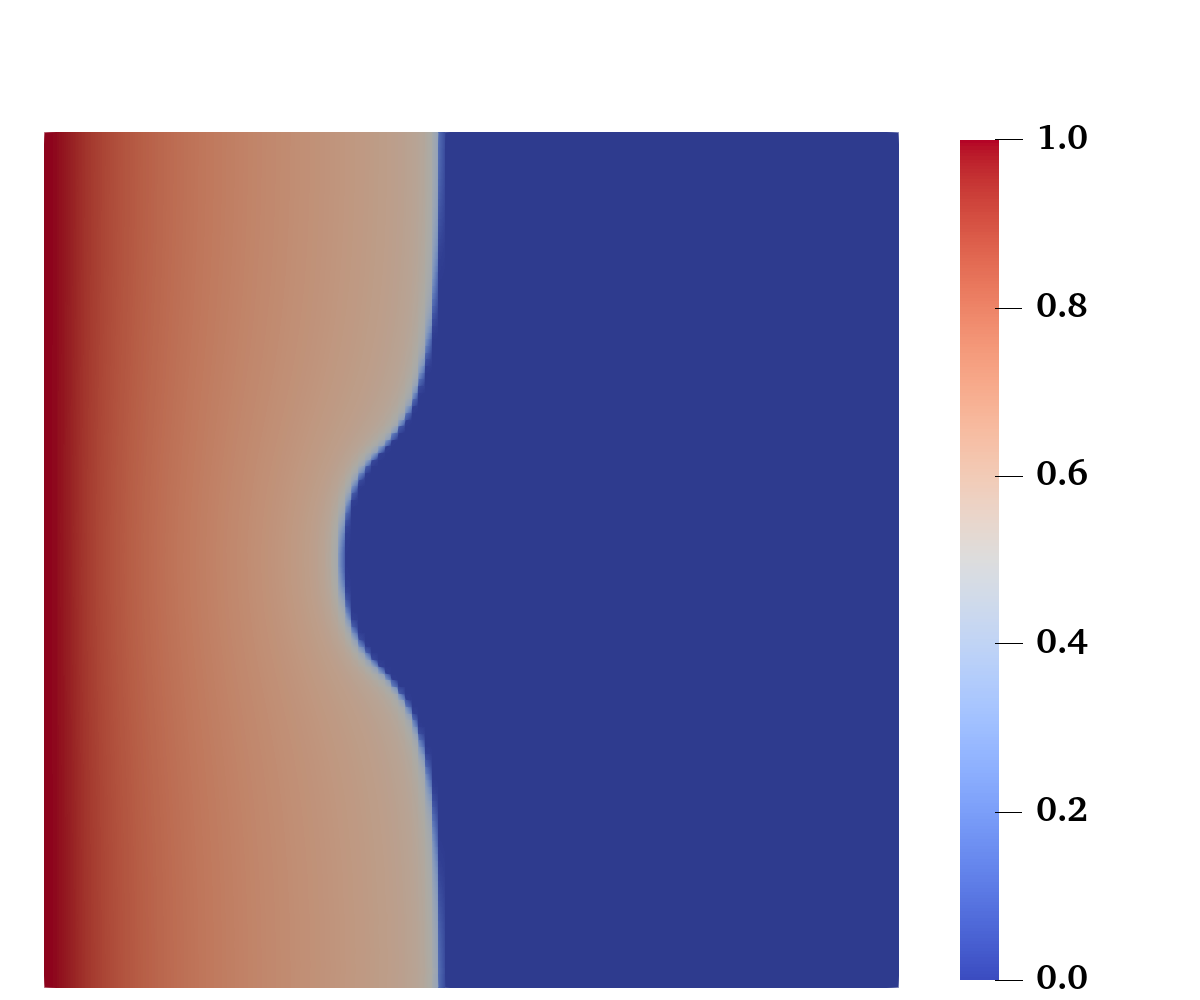}
        \caption{Left: $S$, Right:$\SOF$ ; $t = 1.29$}
    \end{subfigure}
    % \begin{subfigure}{0.3\linewidth}
    % \centering
    %     \includegraphics[trim = 0 0 100 120,clip,width=\linewidth]{noindicator_saturation_ts250.png}
    %     \caption{$\SOF$ ; $t = 1.29$}
    % \end{subfigure}
    % \hfill
    \raisebox{0.5cm}{\rule{0.6pt}{4cm}}
    % \hfill
    \begin{subfigure}{\singlefigurewidth}
    \centering
        \includegraphics[trim = 0 0 100 120,clip,width=\linewidth]{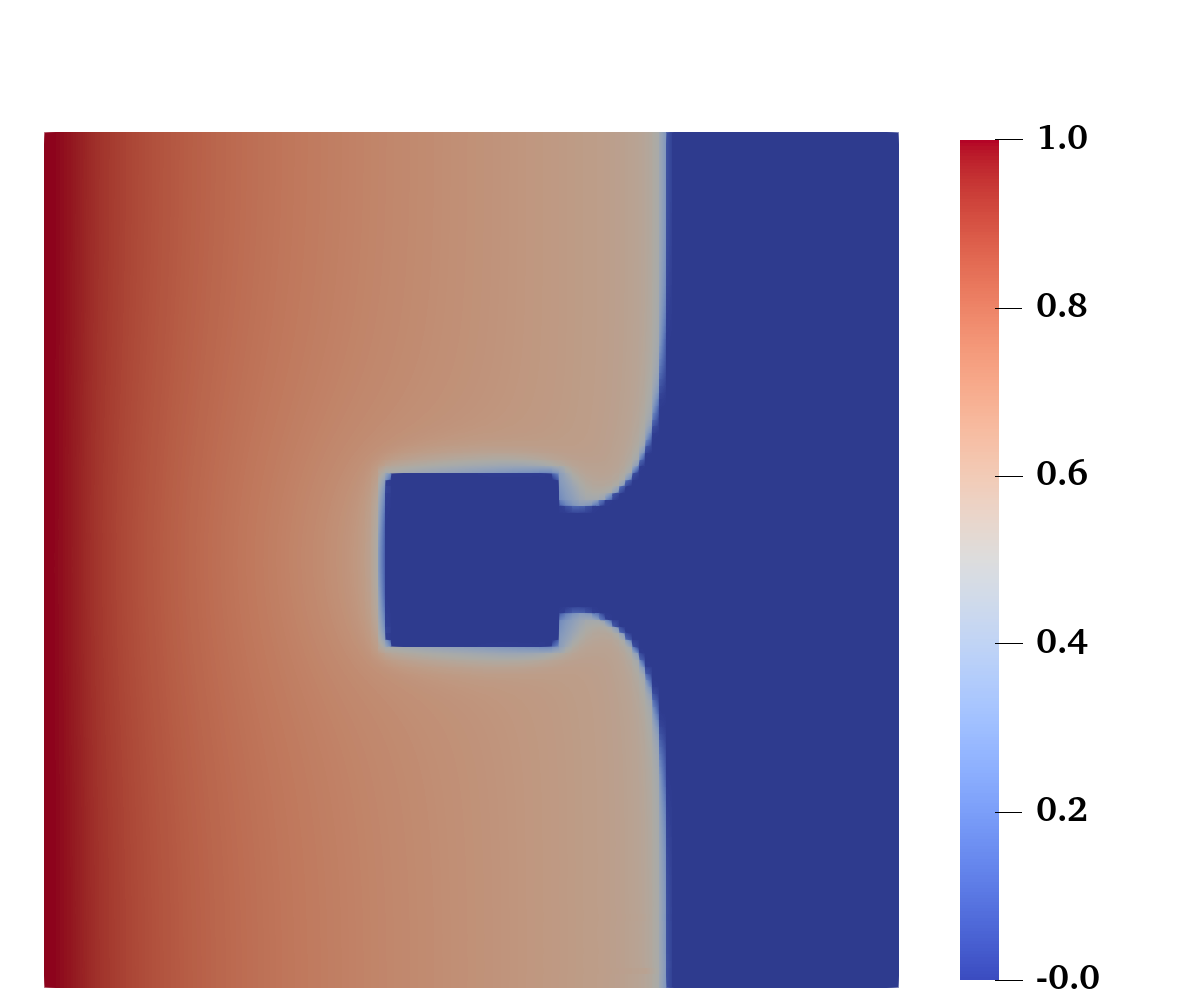}
        \caption{$\SOF$ ; $t = 2$}
    \end{subfigure}
    \caption{(Example~2-2) Left: an illustration of the necessity for the bound-preserving anti-oscillation postprocessing. The simulation without any limiter breaks before reaching the final time. Right: the saturation $\SOF$ at $\T =2$.}
    \label{ex3-01}
\end{figure}

Next, we illustrate the result with the troubled-cell detector. The smoothness indicator is defined as in \eqref{eq:smooth_indicator_x_direction}. We compare the two cases of indicators: ~i) mark cells when $\delta_{i} \geq \SI{1e-3}{}$ ~ii) mark cells when $\delta_{i} \geq \SI{1e-5}{}$. We expect that case ~ii) mark more cells than ~i), resulting in applying the minmod limiter to larger region. 
The result is displayed in Figure~\ref{ex3-02}, where subfigures (a) and (b) show the cell indicators for cases ~i) and ~ii), respectively. As expected, a larger number of cells are marked in case ~ii). The subfigures (c) and (d) compare the corresponding solution profiles at time $\T = 2$. Oscillatory behavior is observed in (c), whereas no visible oscillations are observed in (d).

\begin{figure}[ht!]
\centering
    \begin{subfigure}{0.3\linewidth}
    \centering
        \includegraphics[trim = 0 0 100 120,clip,width=\linewidth]{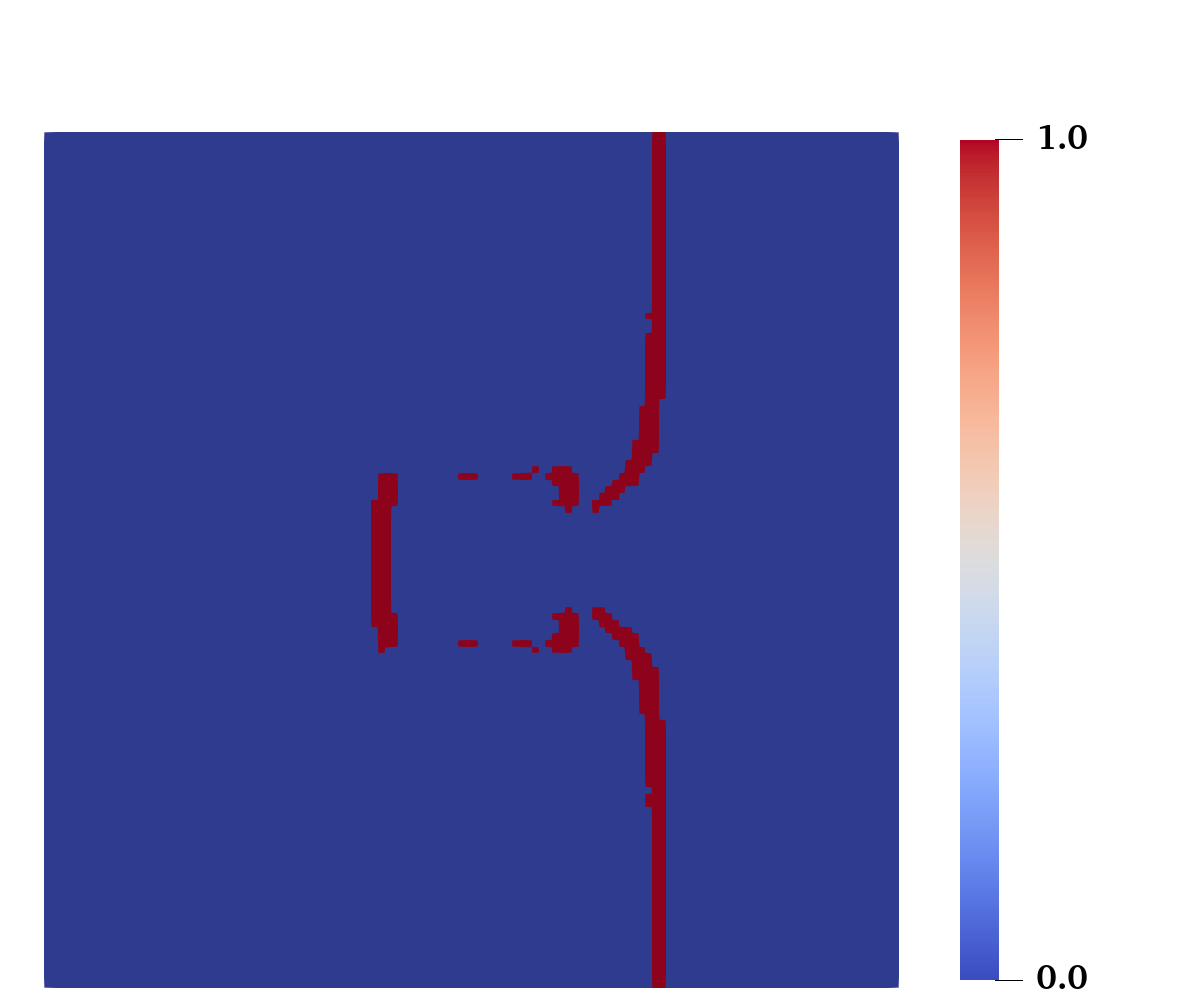}
        \caption{$\delta_{i}\geq\SI{1e-3}{}$ ; $\T = 2$}
    \end{subfigure}
    \begin{subfigure}{0.3\linewidth}
    \centering
        \includegraphics[trim = 0 0 100 120,clip, width=\linewidth]{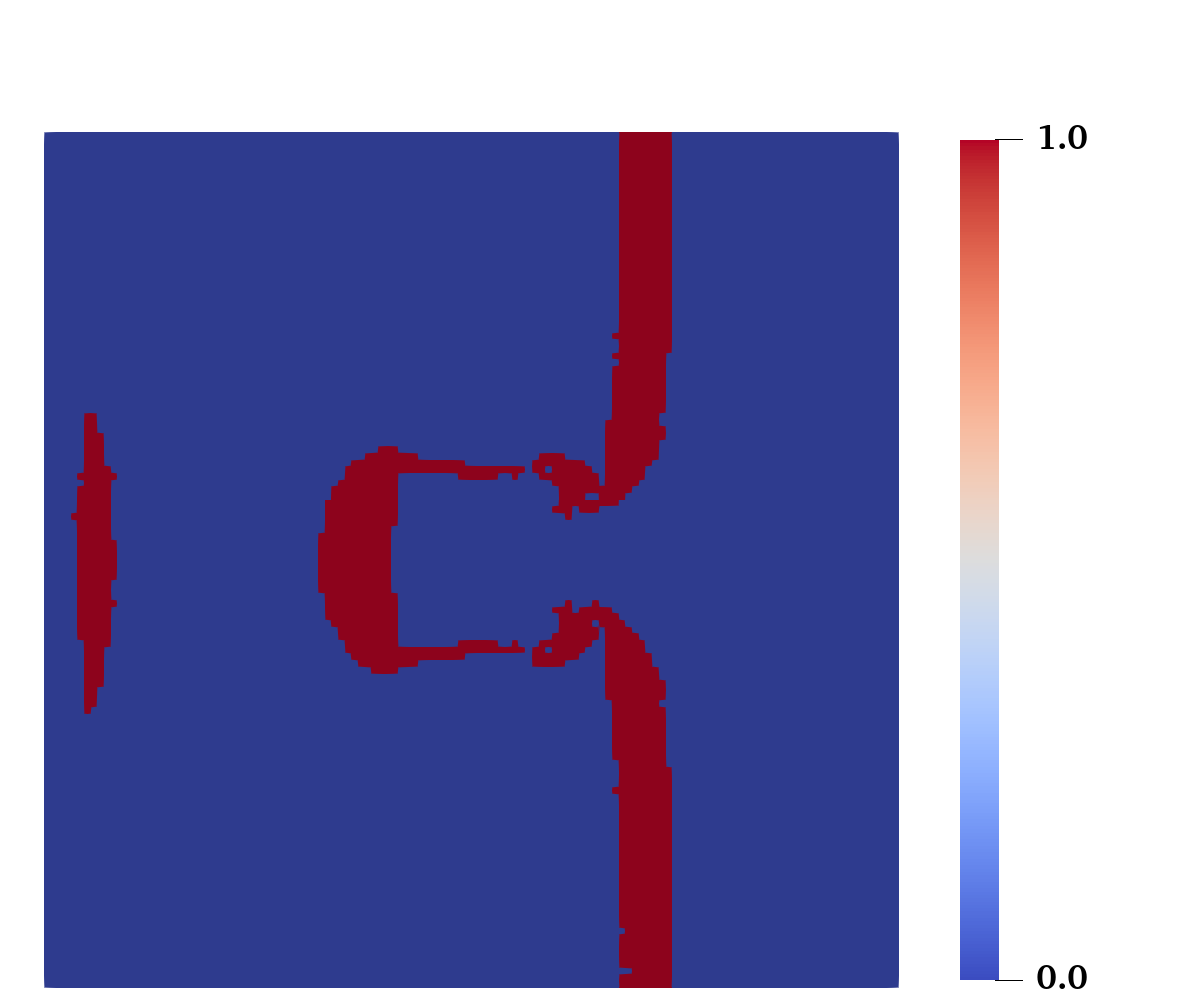}
        \caption{$\delta_{i}\geq\SI{1e-5}{}$ ; $\T = 2$}
    \end{subfigure}\\
    \vspace{0.2cm}
    \begin{subfigure}{0.3\linewidth}
    \centering
        \includegraphics[trim = 0 0 100 120,clip,width=\linewidth]{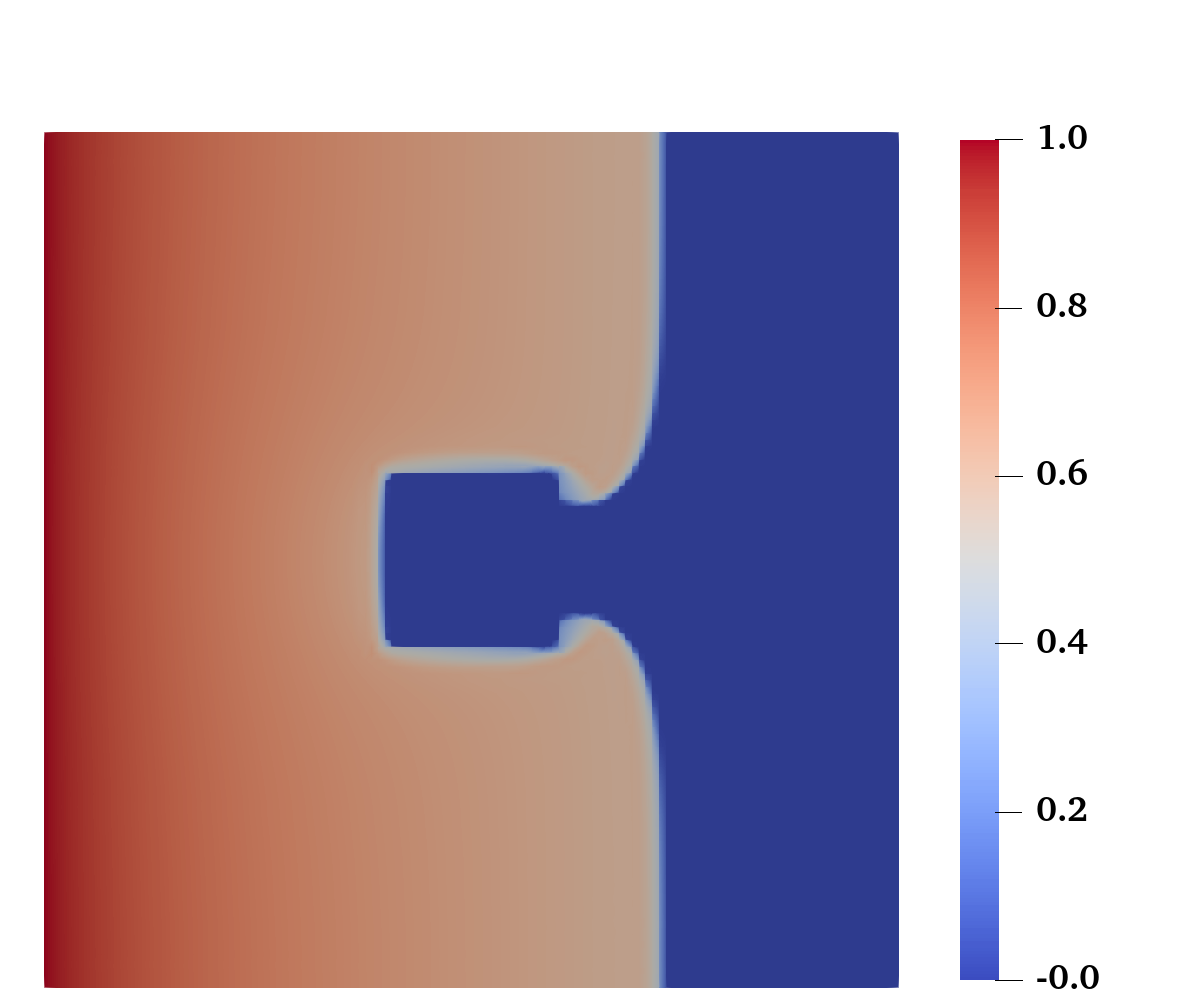}
        \caption{$\SOFD$ with $\delta_{i}\geq\SI{1e-3}{}$; $\T = 2$}
    \end{subfigure}
    \begin{subfigure}{0.3\linewidth}
    \centering
        \includegraphics[trim = 0 0 100 120,clip,width=\linewidth]{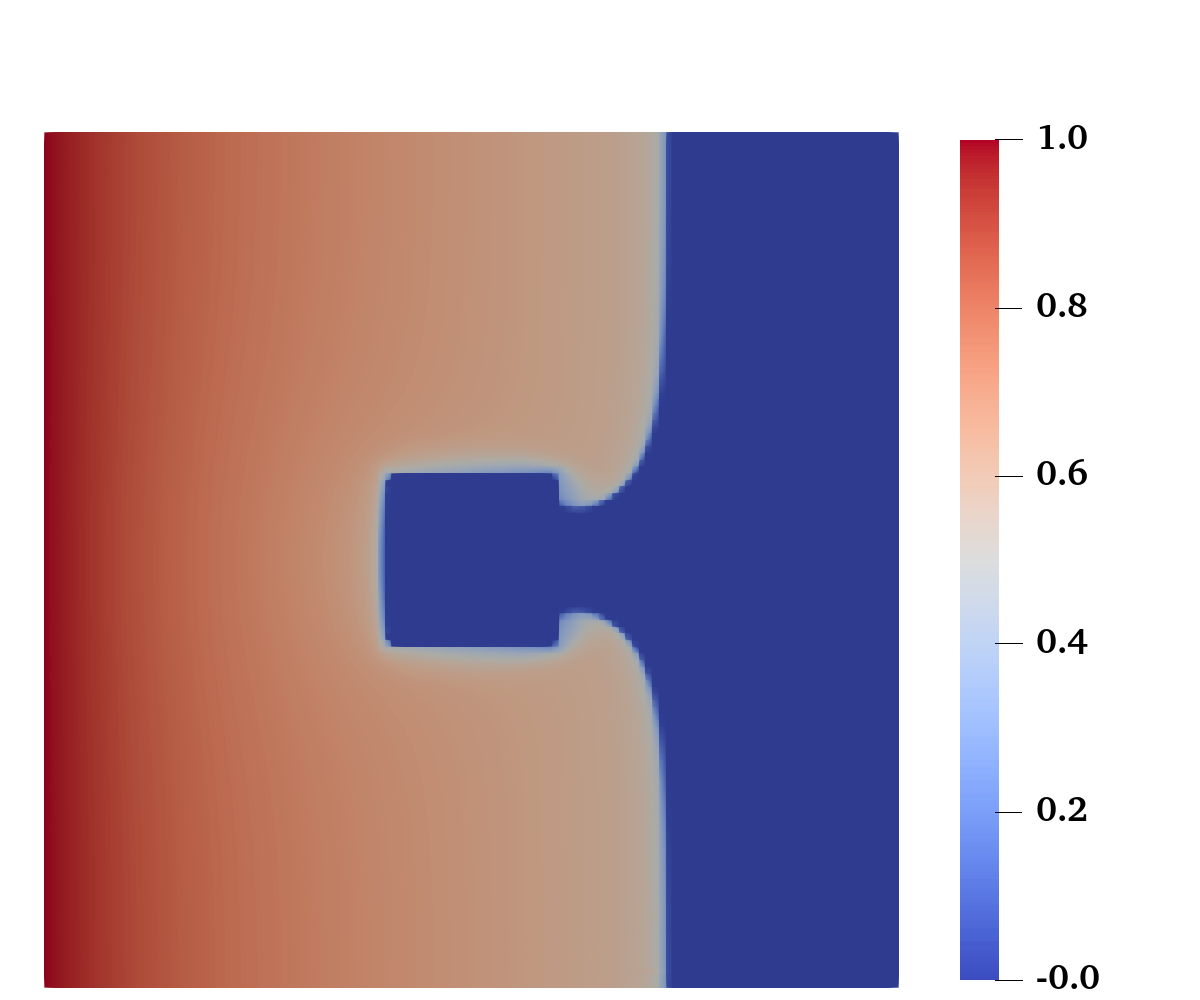}
        \caption{$\SOFD$ with $\delta_{i} \geq \SI{1e-5}{}$; $\T = 2$}
    \end{subfigure}
    \caption{(Example~2-2) Top: cell indicators with case i) $\delta_{i} \geq \SI{1e-3}{}$ and ii) $\delta_{i}\geq\SI{1e-5}{}$. Bottom: associated $\SOFD$ at $\T = 2$.}
    \label{ex3-02}
\end{figure}

Now, we present the numerical results for a non-matching mesh. We introduce a non-matching mesh with local refinement around the square region $[0.4,0.6]^2$, where the permeability is lower than the rest of the domain. Note that $h_{\max}=2h_{\min}$. 

Figure~\ref{ex3-nonmatching-01} demonstrates the effect of the BP+OF limiters for the non-matching mesh case. Figure~\ref{ex3-nonmatching-01}(a) compares the solution $S$ and $\SOF$ at time $t=0.1$. While $S$ exhibits significant oscillations, $\SOF$ shows that BP+OF limiters effectively suppress these oscillations and keep the saturation bounded. $\SOF$ at the final time $\T=2$ is presented in Figure~\ref{ex3-nonmatching-01}(b).

\begin{figure}[ht!]
\centering
% \newlength{\singlefigurewidth}
\setlength{\singlefigurewidth}{0.29\linewidth}
    % \begin{subfigure}{0.3\linewidth}
    % \centering
    %     \includegraphics[trim = 0 0 100 120,clip,width=\linewidth]{nonmatching_noAB_saturation_ts199.png}
    %     \caption{Saturation with no limiters ; $t = 1$}
    % \end{subfigure}
    % \begin{subfigure}{0.3\linewidth}
    % \centering
    %     \includegraphics[trim = 0 0 100 120,clip, width=\linewidth]{nonmatching_noindicator_saturation_ts199.png}
    %     \caption{Saturation with BP+OF ; $t = 1$}
    % \end{subfigure}\\
    % \vspace{0.2cm}
    \begin{subfigure}{0.6\linewidth}
    \centering
        \includegraphics[trim = 0 0 100 120,clip,width=\singlefigurewidth]{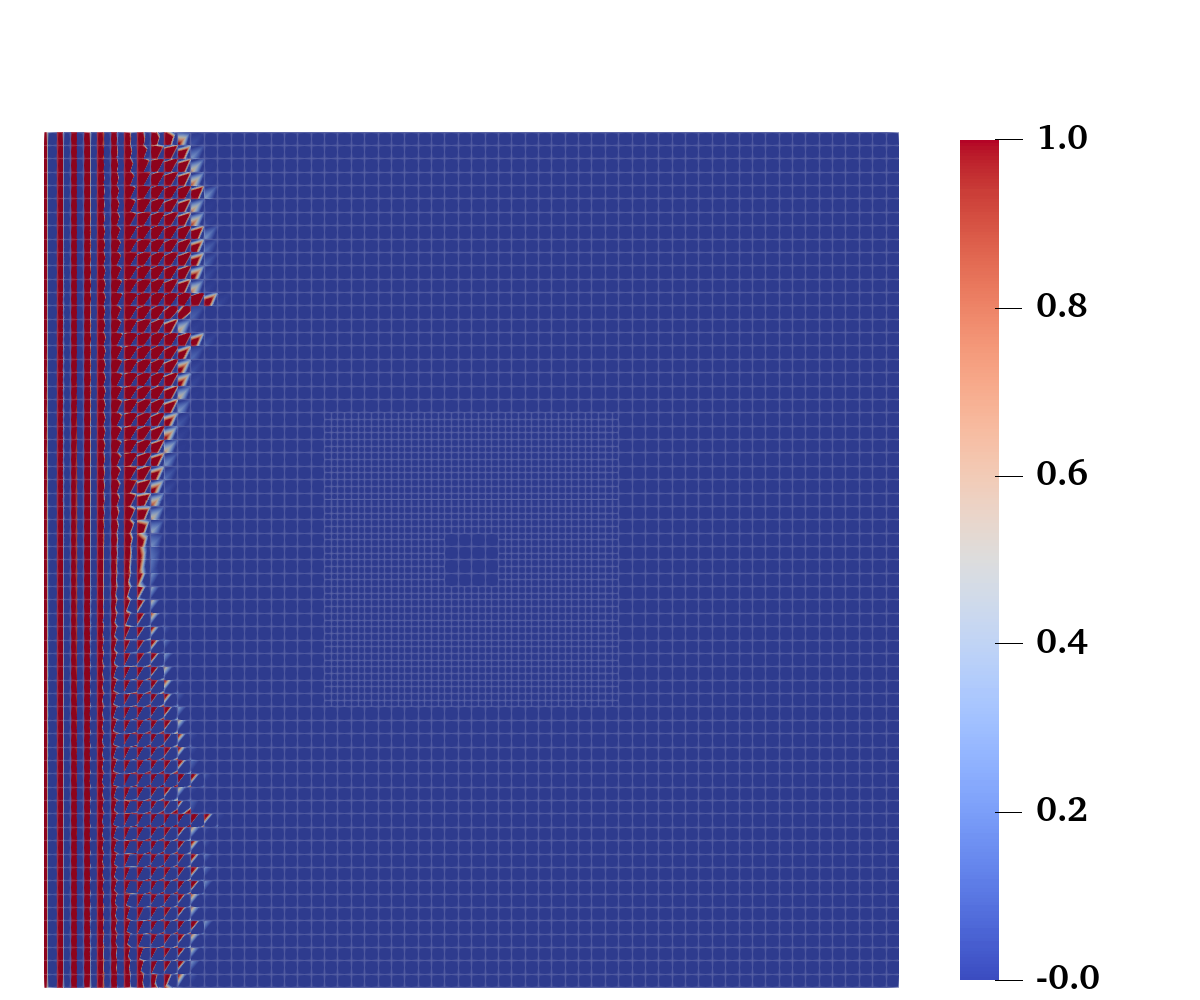}
        \includegraphics[trim = 0 0 100 120,clip,width=\singlefigurewidth]{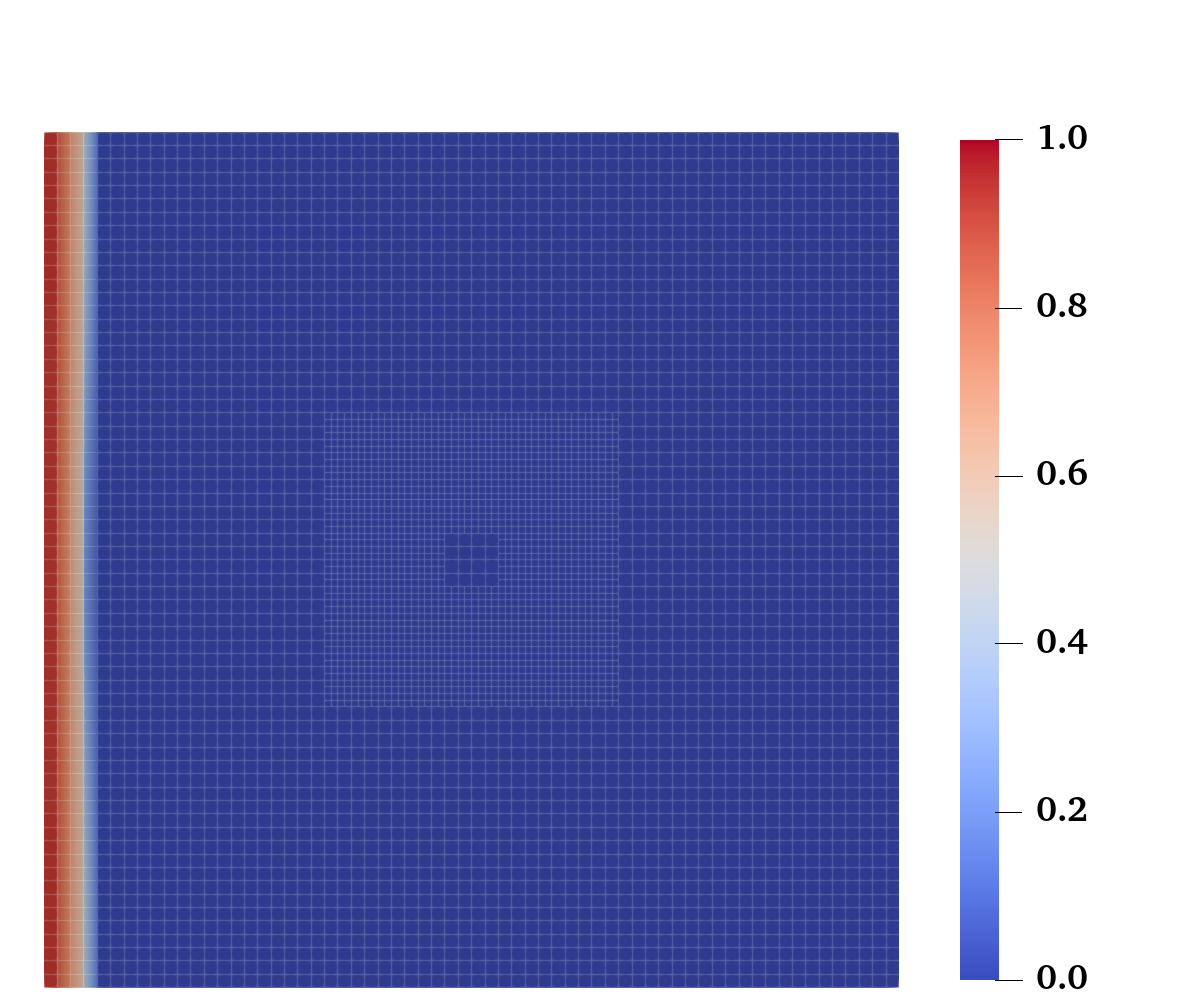}
        \caption{Left: $S$, Right:$\SOF$ ; $t = 0.1$}
        % \caption{$S$ ; $t = 0.1$}
    \end{subfigure}
    \raisebox{0.5cm}{\rule{0.6pt}{4cm}}
    \begin{subfigure}{\singlefigurewidth}
    \centering
        \includegraphics[trim = 0 0 100 120,clip,width=\linewidth]{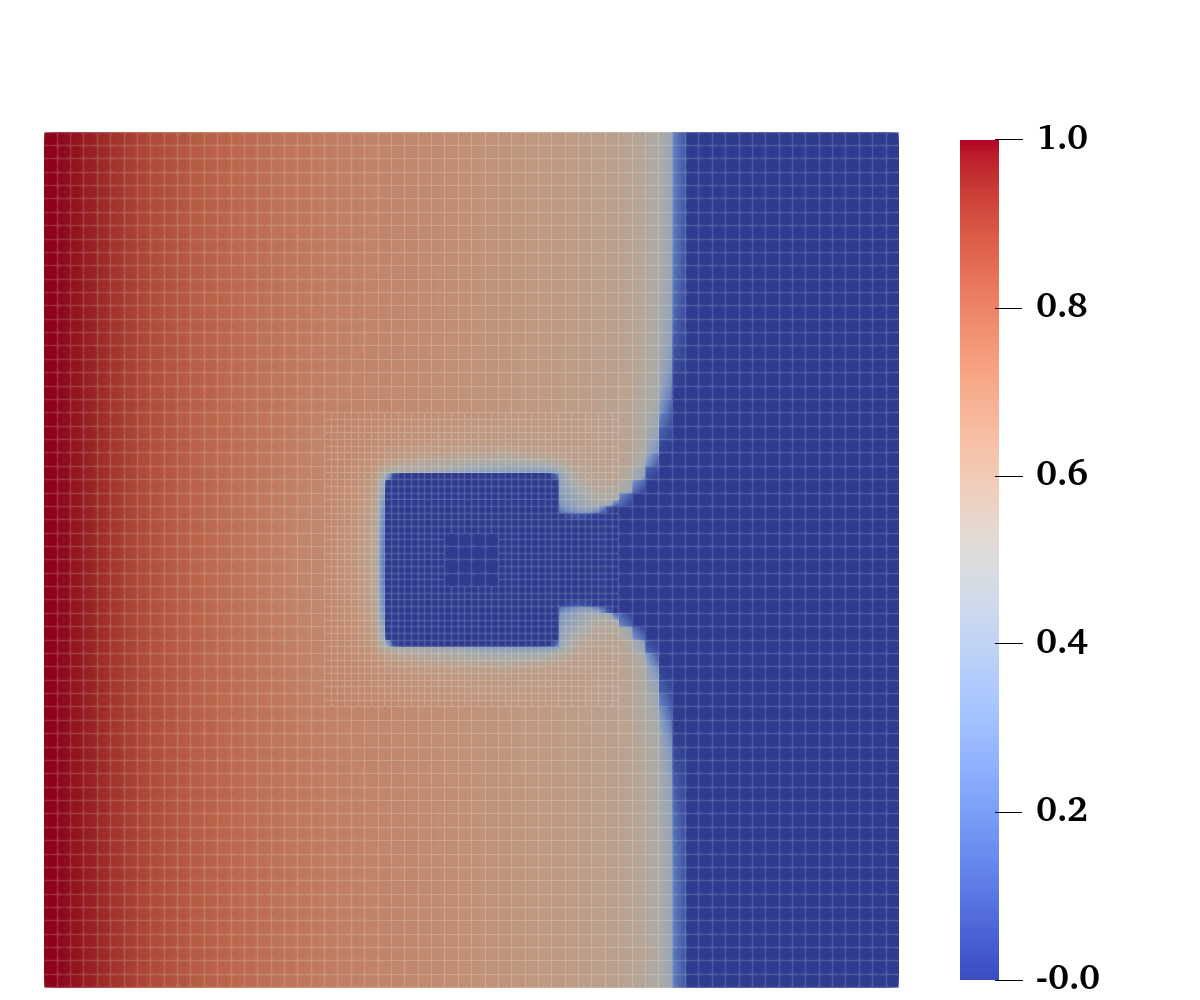}
        \caption{$\SOF$ ; $t = 2$}
    \end{subfigure}
    \caption{(Example~2-2, Non-matching mesh) $S$ and $\SOF$ at time $t = 0.1$ and $\SOF$ at time $t = 2$.}
    \label{ex3-nonmatching-01}
\end{figure}

Finally, we examine the troubled-cell detector for the non-matching mesh case. The smoothness indicator is defined as in \eqref{eq:smooth_indicator_x_direction}, and two thresholds are compared:~(i) $\delta_{i}\geq \SI{1e-3}{}$ and~(ii) $\delta_{i}\geq \SI{1e-5}{}$. The lower threshold in case~(ii) marks more cells as troubled cells, which means that the OF limiter is applied to a larger portion of the domain. Figure~\ref{ex3-nonmatching-02} (a)-(b) shows the corresponding cell indicators for each threshold, where a greater number of cells are detected in case~(ii). This demonstrates that a stricter troubled cell detection threshold enhances oscillation control for the non-matching mesh problem as well.

\begin{figure}[ht!]
\centering
    \begin{subfigure}{0.3\linewidth}
    \centering
        \includegraphics[trim = 0 0 100 120,clip,width=\linewidth]{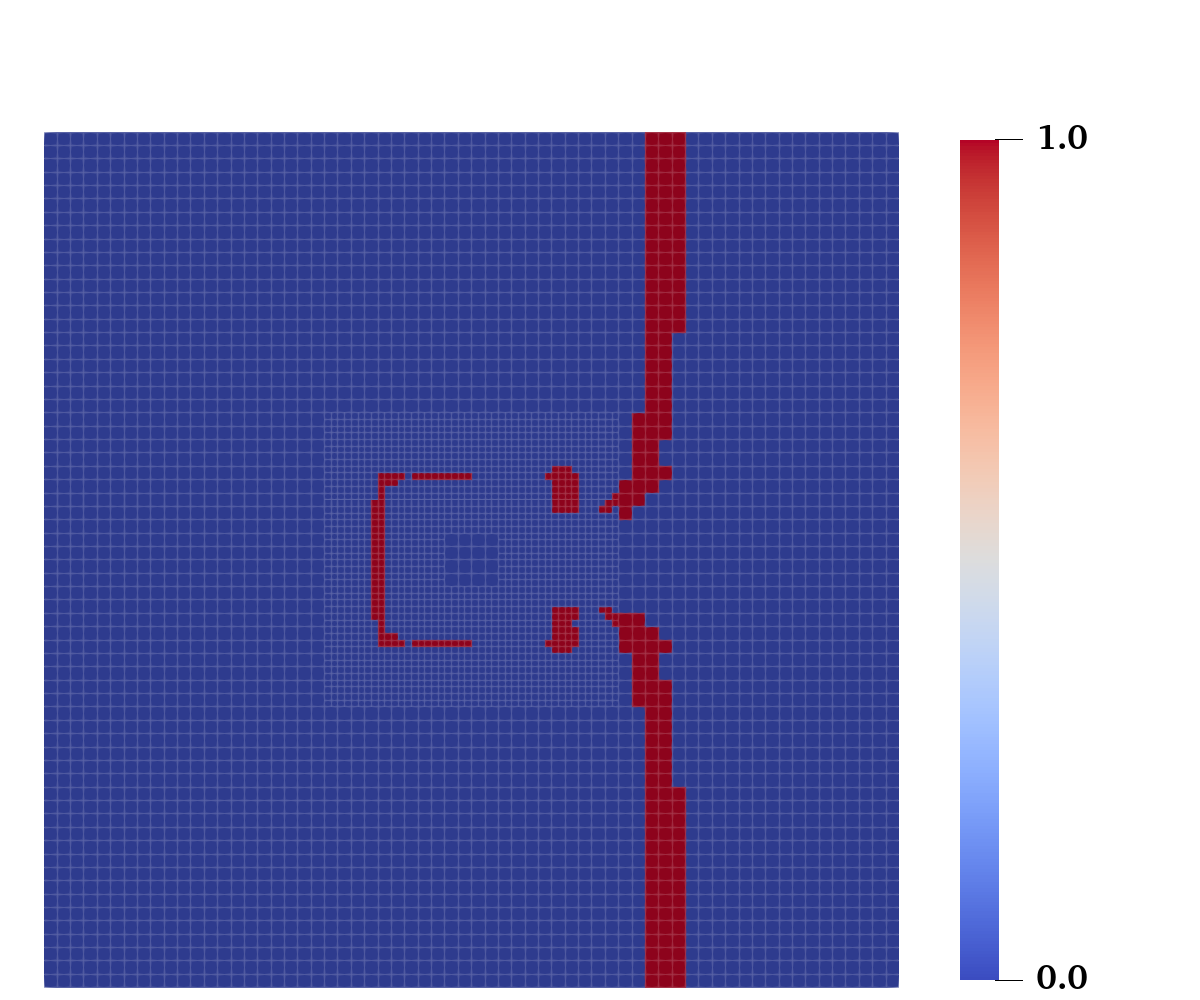}
        \caption{$\delta_{i}\geq\SI{1e-3}{}$ ; $t = 2$}
    \end{subfigure}
    \begin{subfigure}{0.3\linewidth}
    \centering
        \includegraphics[trim = 0 0 100 120,clip, width=\linewidth]{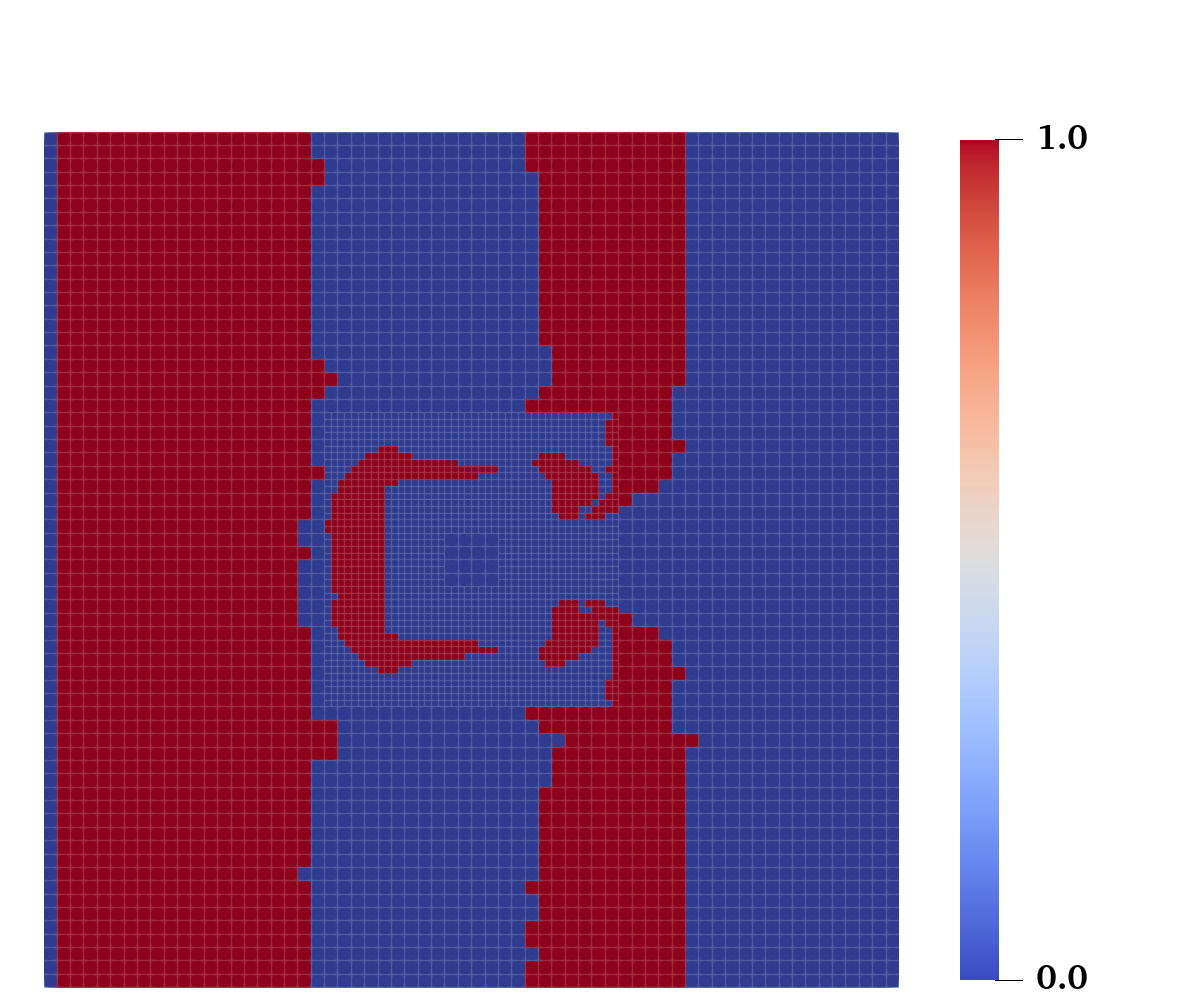}
        \caption{$\delta_{i}\geq\SI{1e-5}{}$ ; $t = 2$}
    \end{subfigure}\\
    \vspace{0.2cm}
    \begin{subfigure}{0.3\linewidth}
    \centering
        \includegraphics[trim = 0 0 100 120,clip,width=\linewidth]{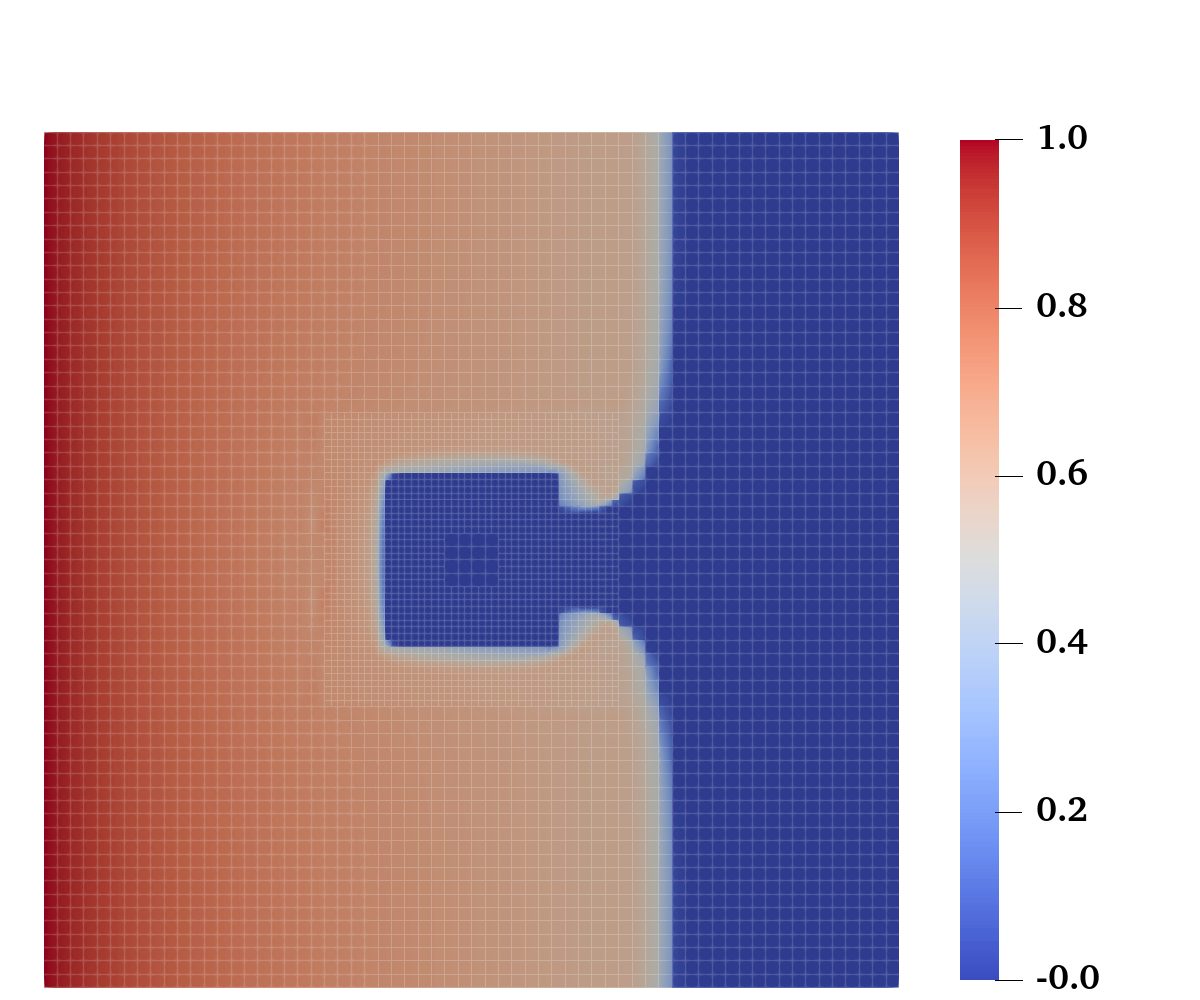}
        \caption{$\SOFD$ with $\delta_{i}\geq\SI{1e-3}{}$; $t = 2$}
    \end{subfigure}
    \begin{subfigure}{0.3\linewidth}
    \centering
        \includegraphics[trim = 0 0 100 120,clip,width=\linewidth]{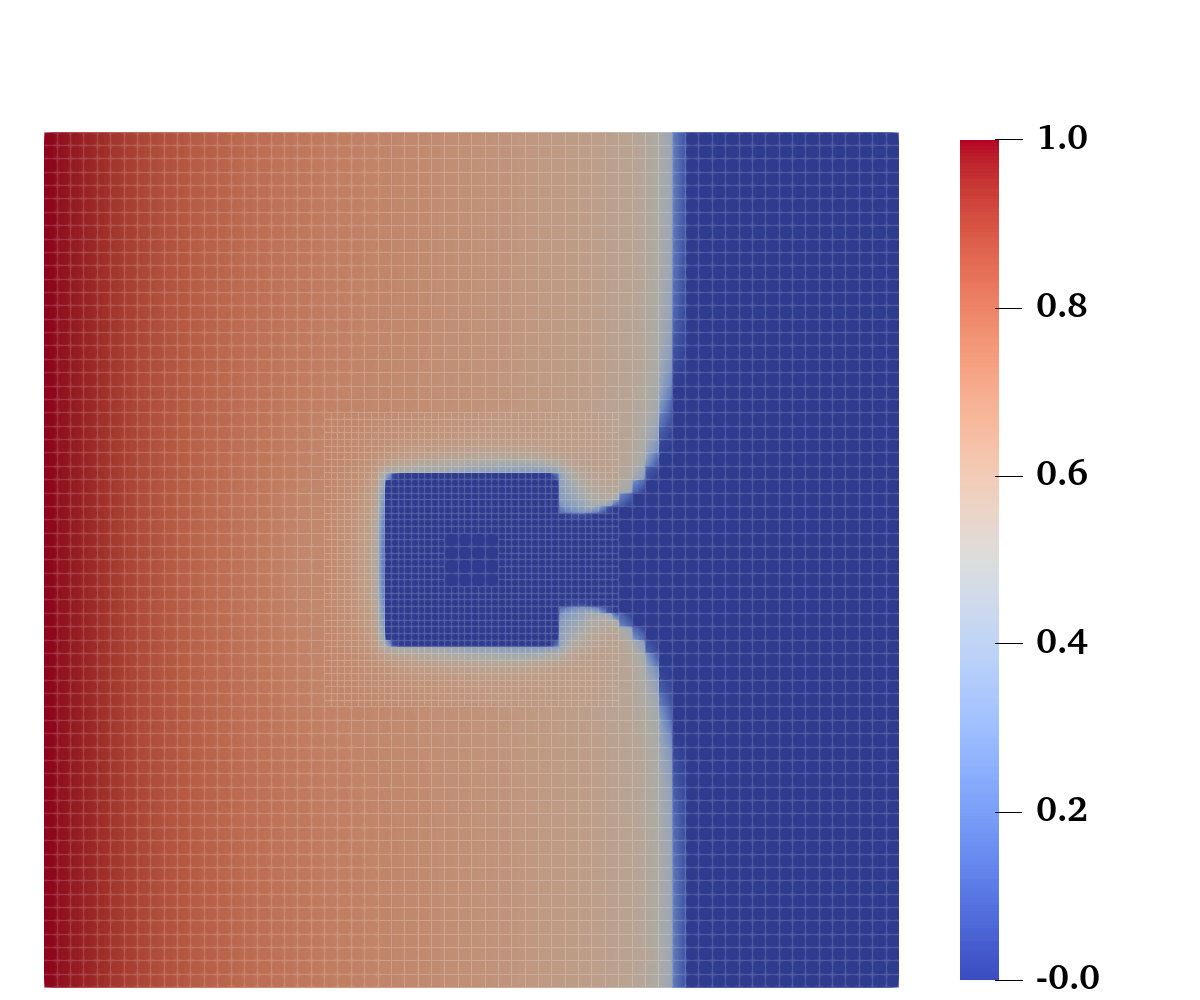}
        \caption{$\SOFD$ with $\delta_{i} \geq \SI{1e-5}{}$; $t = 2$}
    \end{subfigure}
    \caption{(Example~2-2) Top: cell indicators with case i) $\delta_{i} \geq \SI{1e-3}{}$ and ii) $\delta_{i}\geq\SI{1e-5}{}$. Bottom: associated $\SOFD$ at $\T = 2.$}
    \label{ex3-nonmatching-02}
\end{figure}

\subsection{Example~3: Flow in fractured domain}
In this example, we consider two-phase flow in a fractured porous medium defined on the domain $\Omega = [0,1]^2$. Flow in fractured porous media has attracted considerable interest in a variety of applications~\cite{yang2026heat,hu2025ensemble,lee2021modeling}. A network of fracture segments is distributed throughout the domain, as illustrated in Figure~\ref{fractures_domain}.
A network of fracture segments is distributed throughout the domain, as illustrated in Figure~\ref{fractures_domain}. 
The initial conditions are defined as $s_w(\vx,0)=p_w(\vx,0)=0$. The inflow boundary condition is prescribed as $s_w(\vx,\cdot)=1$ on $\{x=0\}\times[0,1]$, and Dirichlet boundary conditions for pressure are imposed as $p_w(\vx,\cdot)=1$ on $\{x=0\}\times[0,1]$ and $p_w(\vx,\cdot)=0$ on $\{x=1\}\times[0,1]$. No-flow conditions $\vu_N = 0$ are prescribed on the top and bottom boundaries. 
The time step is chosen to be $\dt=0.01$ where the final time is $\T=25$. The permeability field corresponding to the fractured medium is shown in Figure~\ref{ex4-permeability}. The fracture segments are modeled as high-permeability with permeability $K_D=10^{-1}$, whereas the permeability of the surrounding matrix is $K_D=10^{-3}$. 
In addition, we employ local mesh refinement around fracture boundaries to capture fracture geometry and sharp permeability transitions accurately. The resulting non-matching mesh is constructed such that $h_{\max}=2h_{\min}$ where $h_{\min}=1/128$. 

\begin{figure}[ht!]
\centering

    \begin{subfigure}{0.4\linewidth}
        \scalebox{0.4}{\begin{tikzpicture}[
    line cap=round,
    line join=round,
    font=\normalsize,
]
 
\def\L{7} 
 
\fill[domainbg] (0,0) rectangle (\L,\L);
 
\def\bx{2.8}  \def\by{2.8}  
\def\bw{1.4}  \def\bh{1.4}   
% \fill[blockgreenlight] (\bx,\by) rectangle (\bx+\bw,\by+\bh);
% \draw[blockgreen, line width=1.6pt] (\bx,\by) rectangle (\bx+\bw,\by+\bh);
 \draw[green!60!black, line width=0.8pt] (1.26,2.94) -- (2.94,2.24);
\draw[green!60!black, line width=0.8pt] (1.54,1.26) -- (4.06,5.74);
\draw[green!60!black, line width=0.8pt] (2.52,5.46) -- (5.74,2.52);
\draw[green!60!black, line width=0.8pt] (3.78,3.08) -- (5.32,4.06);
\draw[green!60!black, line width=0.8pt] (4.20,1.54) -- (6.16,3.78);

\draw[green!60!black, line width=0.8pt] (0.56,6.02) -- (2.94,4.06);
\draw[green!60!black, line width=0.8pt] (0.42,4.90) -- (3.22,5.32);
\draw[green!60!black, line width=0.8pt] (0.84,1.96) -- (2.66,2.52);
\draw[green!60!black, line width=0.8pt] (3.92,3.64) -- (6.02,6.30);
\draw[green!60!black, line width=0.8pt] (4.20,6.30) -- (6.44,4.06);
\draw[green!60!black, line width=0.8pt] (4.06,2.66) -- (5.74,1.26);
\draw[labelblue, line width=2pt] (0,0) -- (0,\L);
\draw[labelblue, line width=2pt] (\L,0) -- (\L,\L);
\draw[red!70, line width=2pt] (0,0) -- (\L,0);
\draw[red!70, line width=2pt] (0,\L) -- (\L,\L);

\node[below] at (0,    -0.05) {$0$};
\node[below] at (\L,   -0.05) {$1$};
\node[left]  at (-0.05, \L)   {$1$};
 
% \node[font=\LARGE] at (1.4, \L-1.0)
%       {$\mathbf{K} = 10^{-1}\mathbf{I}$};
 
% \node[font=\LARGE, align=center] at (\bx+\bw/2, \by+\bh/2)
%       {$\mathbf{K} = 10^{-6}\mathbf{I}$};
 
\foreach \y in {2.5, 3.5, 4.5}{
  \draw[arrowcol, line width=1.4pt, -stealth]
        (-1.5,\y) -- (-0.08,\y);
}

\node[ font=\huge, anchor=east] at (-1.95, 4.2)
      {$s_w = 1$};
\node[ font=\huge, anchor=east] at (-1.95, 3.1)
      {$p_w = 1$};

\node[labelblue, font=\large, anchor=north east, align=right]
      at (-0.05, 0.4) {$\Gamma_{D}$};

\node[red!70, font=\large, anchor=north, align=right] at (0.5, 7.6) {$\Gamma_N$};
 
\node[font=\huge, anchor=west] at (\L+0.35, \L/2)
      {$p_w = 0$};
\node[labelblue, font=\large, anchor=north west, align=left]
      at (\L+0.05, 0.4) {$\Gamma_{D}$};

\node[red!70, font=\large, anchor=south, align=right] at (0.5, -0.6) {$\Gamma_N$};
 
\node[font=\LARGE] at (\L/2, -0.65)
      {$\mathbf{u}_N = 0 \text{ on } \Gamma_{N}$};
 
\node[font=\LARGE] at (\L/2, \L+0.65)
      {$\mathbf{u}_N = 0 \text{ on } \Gamma_{N}$};
 
\node[font=\LARGE, align=center] at (\L/2, 1.1) {%
  \begin{tabular}{l}
    $s_w(\mathbf{x},\,t=0) = 0$\\[4pt]
    $p_w(\mathbf{x},\,t=0) = 0$
  \end{tabular}
};

\end{tikzpicture}}
    \caption{Schematic representation of fracture segments, boundary and initial conditions in $\Omega = [0,1]^2$.}
    \label{fractures_domain}
    \end{subfigure}
    \hspace{1cm}
    \begin{subfigure}{0.3\linewidth}
    \centering
    \raisebox{0.3cm}{\includegraphics[trim = 0 0 50 120, clip, width=0.85\linewidth]{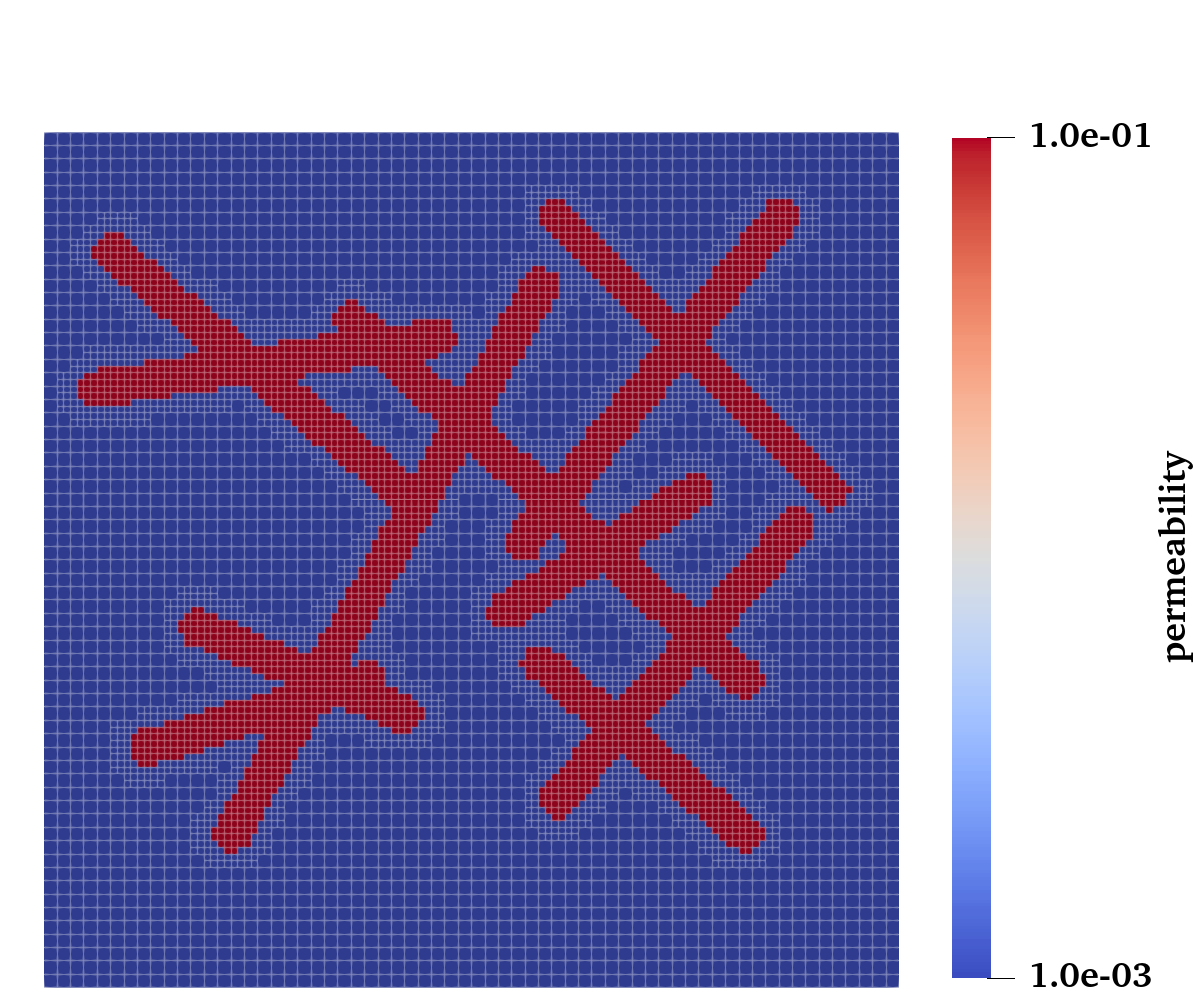}}
    \caption{Distribution of permeability on a non-matching mesh $h_{\max}=2h_{\min}$.}
    \label{ex4-permeability}
    \end{subfigure}
    \caption{(Example~3) (a) Fracture configuration with boundary conditions; (b) resulting permeability field on a non-matching mesh.}
    \label{ex4-setup}
\end{figure}

The evolution of the saturation with the BP+OF limiter applied is shown in Figure~\ref{ex4-bpof}. The saturation front initially propagates through the highly permeable fracture and gradually spreads into the surrounding porous matrix. As time propagates, the saturation profiles remain smooth and not oscillatory throughout the simulation.

We present the result obtained with the BP+OF limiter combined with a troubled-cell detector, using the smoothness indicator $\delta_{i}\geq\SI{1e-5}{}$ in Figure~\ref{ex4-detector}. The resulting saturation remains stable and free from oscillations. The solution is very similar to that obtained when the limiter is applied everywhere. However, the solutions with the detector are less diffusive within the fractures and near the fracture-matrix interfaces, allowing sharper saturation fronts to be preserved. This indicates that limiting only the troubled cells reduces unnecessary numerical dissipation while maintaining stability and the physically accurate solution.
\begin{figure}[ht!]
\centering
    \begin{subfigure}{0.3\linewidth}
    \centering
        \includegraphics[trim = 0 0 100 120,clip,width=\linewidth]{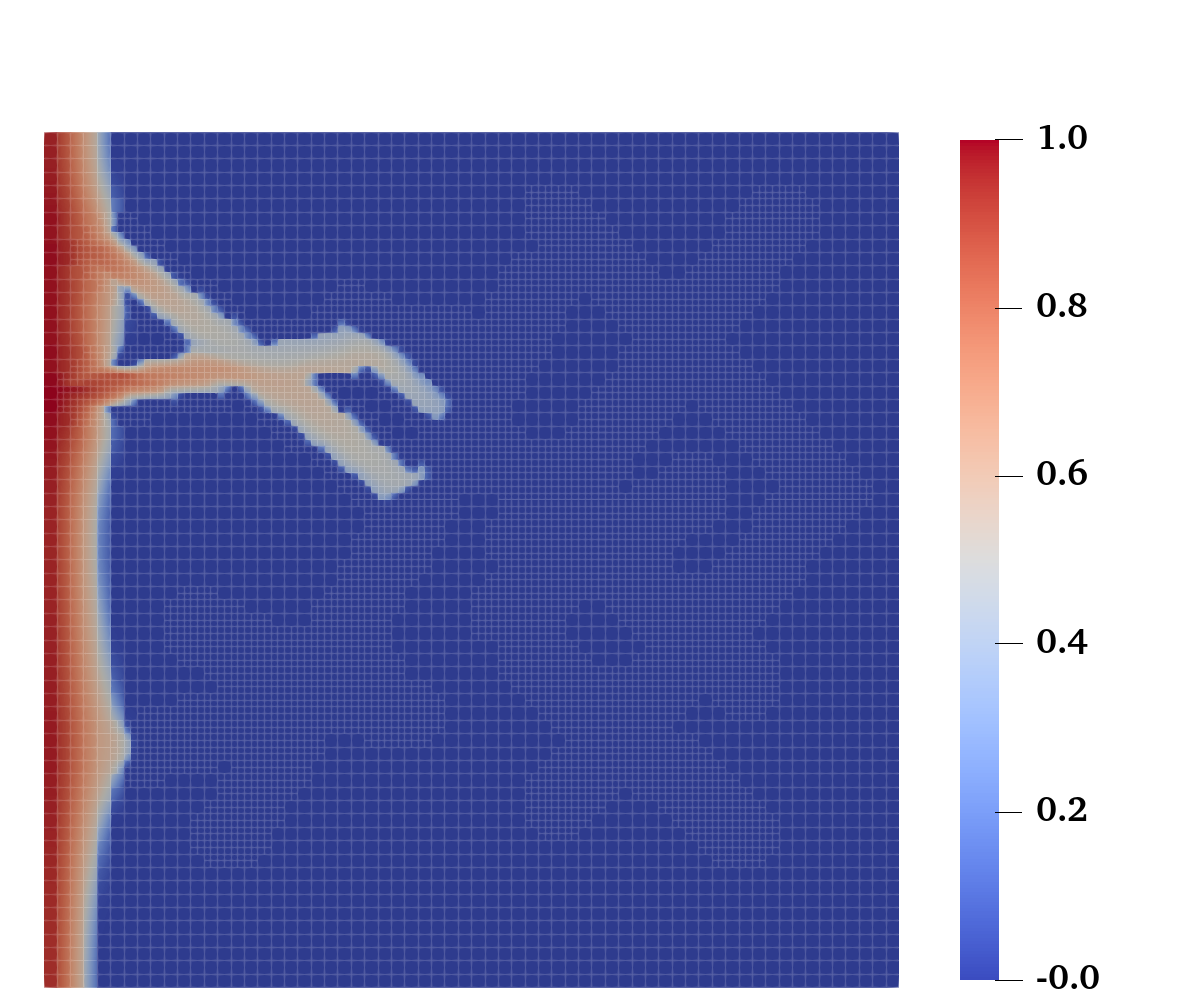}
        \caption{$\SOF$; $t=5$}
    \end{subfigure}
    \hfill
    \begin{subfigure}{0.3\linewidth}
    \centering
        \includegraphics[trim = 0 0 100 120,clip, width=\linewidth]{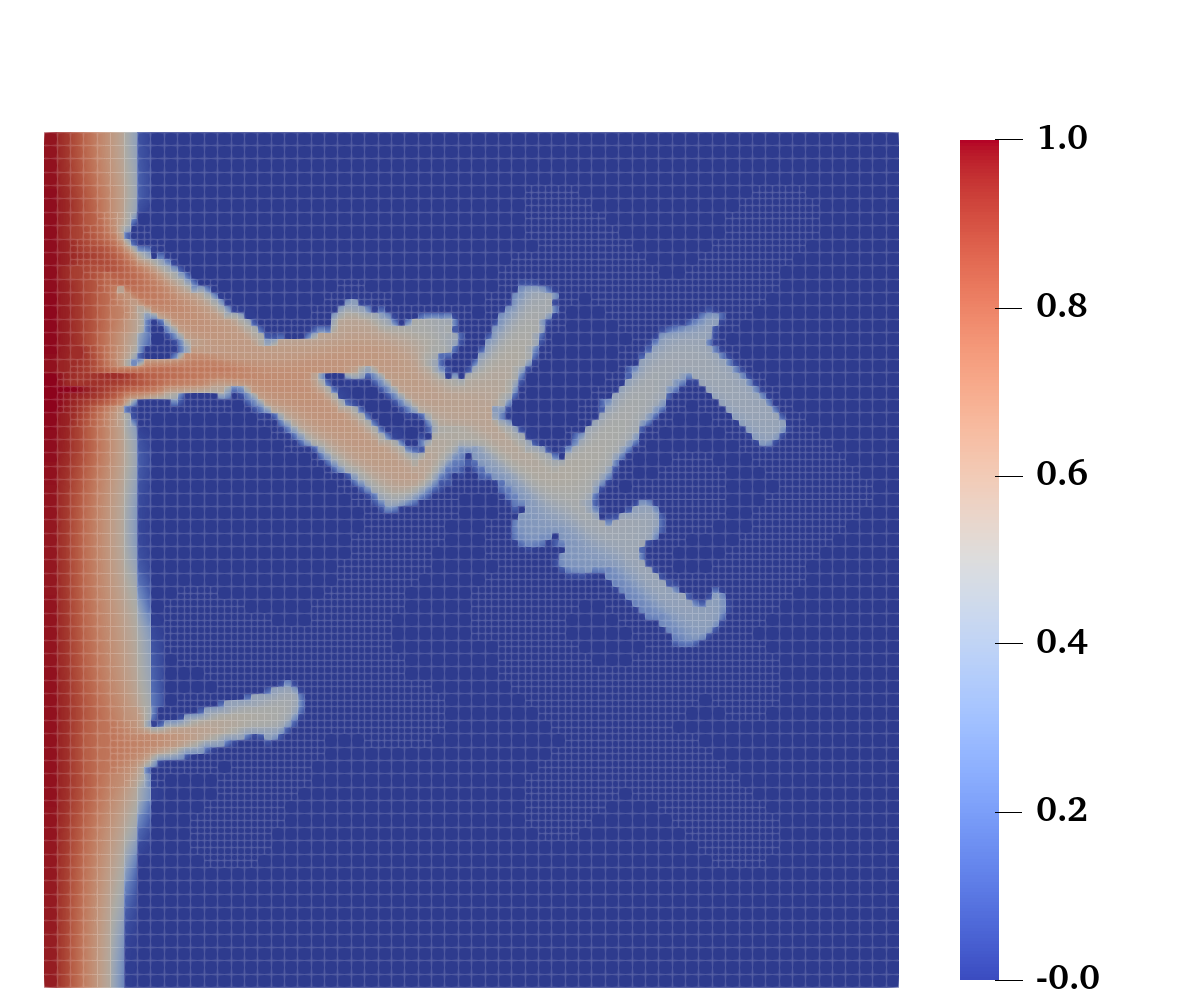}
        \caption{$\SOF$; $t=10$}
    \end{subfigure}
    % \vspace{0.5cm}
    \hfill
    \begin{subfigure}{0.3\linewidth}
    \centering
        \includegraphics[trim = 0 0 100 120,clip,width=\linewidth]{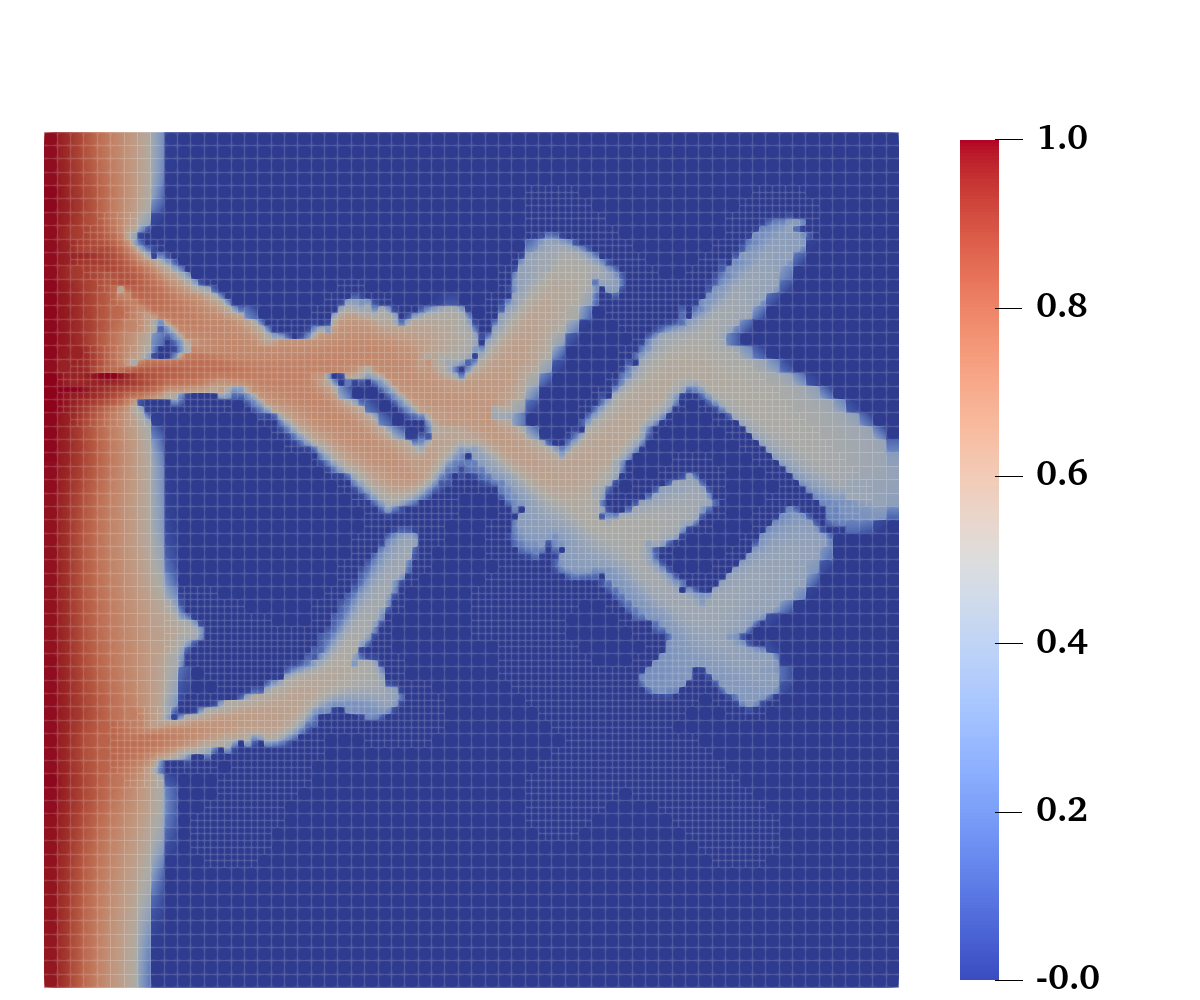}
        \caption{$\SOF$; $t=15$}
    \end{subfigure}
    
    \begin{subfigure}{0.3\linewidth}
    \centering
        \includegraphics[trim = 0 0 100 120,clip,width=\linewidth]{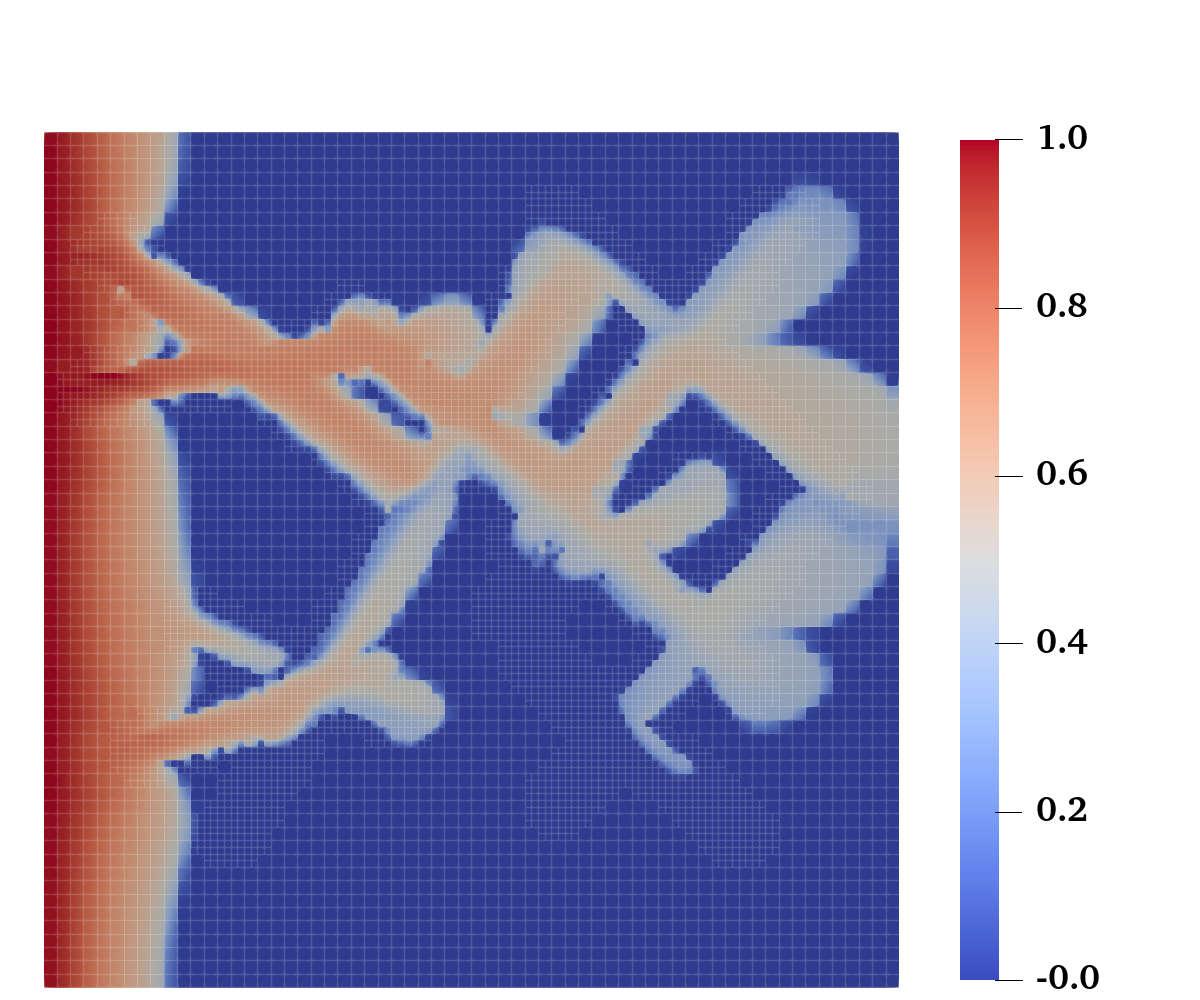}
        \caption{$\SOF$; $t=20$}
    \end{subfigure}
    % \hfill
    \hspace{1.5cm}
    \begin{subfigure}{0.3\linewidth}
    \centering
        \includegraphics[trim = 0 0 100 120,clip,width=\linewidth]{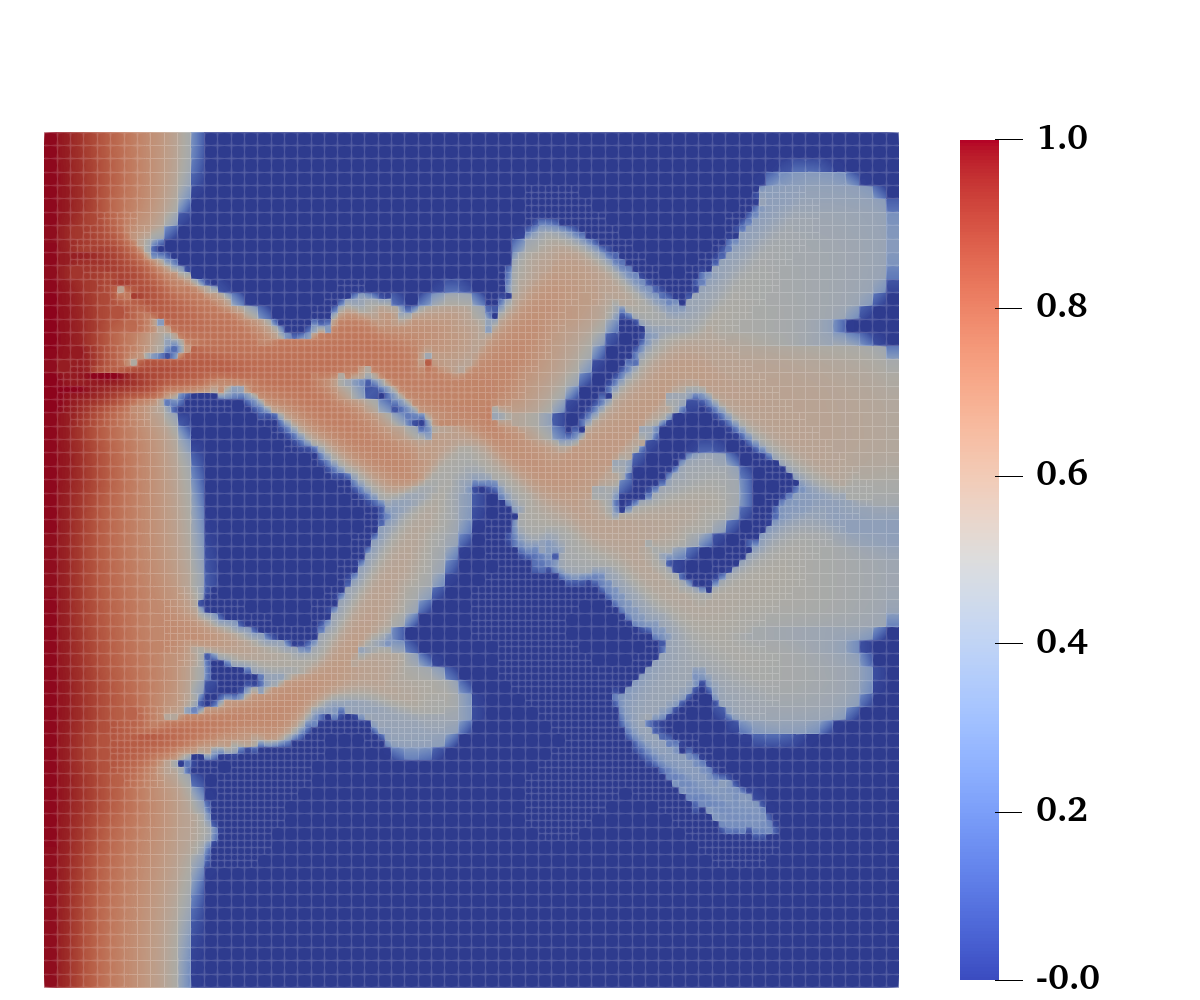}
        \caption{$\SOF$; $t=25$}
    \end{subfigure}
    % \hfill
    % \begin{subfigure}{0.3\linewidth}
    % \centering
    %     \includegraphics[trim = 0 0 100 120,clip,width=\linewidth]{ex4_bpof_2999.png}
    %     \caption{$\SOF$; $t=30$}
    % \end{subfigure}
    \caption{(Example~3) $\SOF$ at time $t = 5, 10, 15, 20$, and $t=25$.}
    \label{ex4-bpof}
\end{figure}

\begin{figure}[ht!]
\centering
    \begin{subfigure}{0.3\linewidth}
    \centering
        \includegraphics[trim = 0 0 100 120,clip,width=\linewidth]{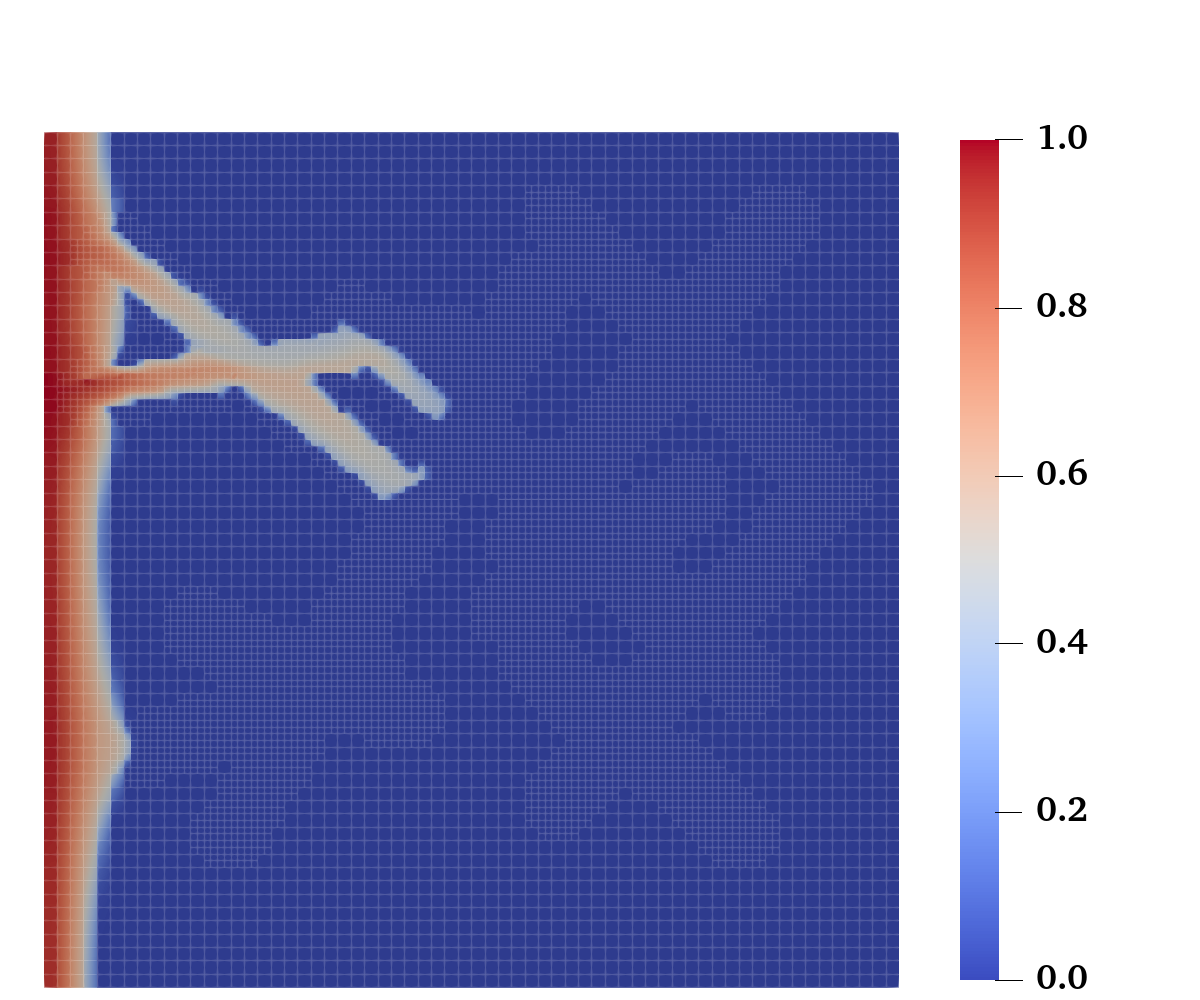}
        \caption{$\SOFD$ with $\delta_{i}\geq\SI{1e-5}{}$; $t = 5$}
    \end{subfigure}
    \hfill
    \begin{subfigure}{0.3\linewidth}
    \centering
        \includegraphics[trim = 0 0 100 120,clip, width=\linewidth]{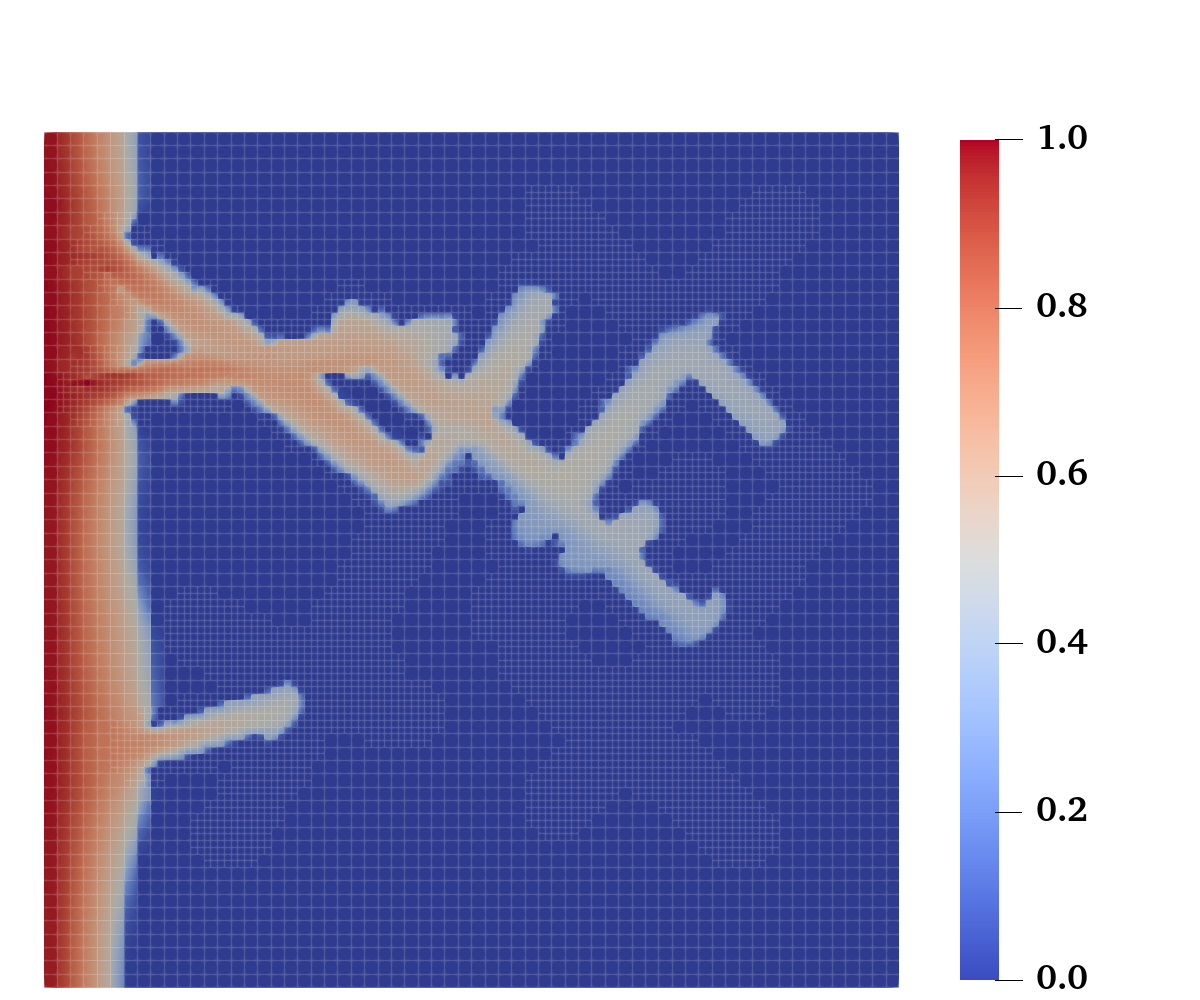}
        \caption{$\SOFD$ with $\delta_{i}\geq\SI{1e-5}{}$; $t = 10$}
    \end{subfigure}
    % \vspace{0.5cm}
    \hfill
    \begin{subfigure}{0.3\linewidth}
    \centering
        \includegraphics[trim = 0 0 100 120,clip,width=\linewidth]{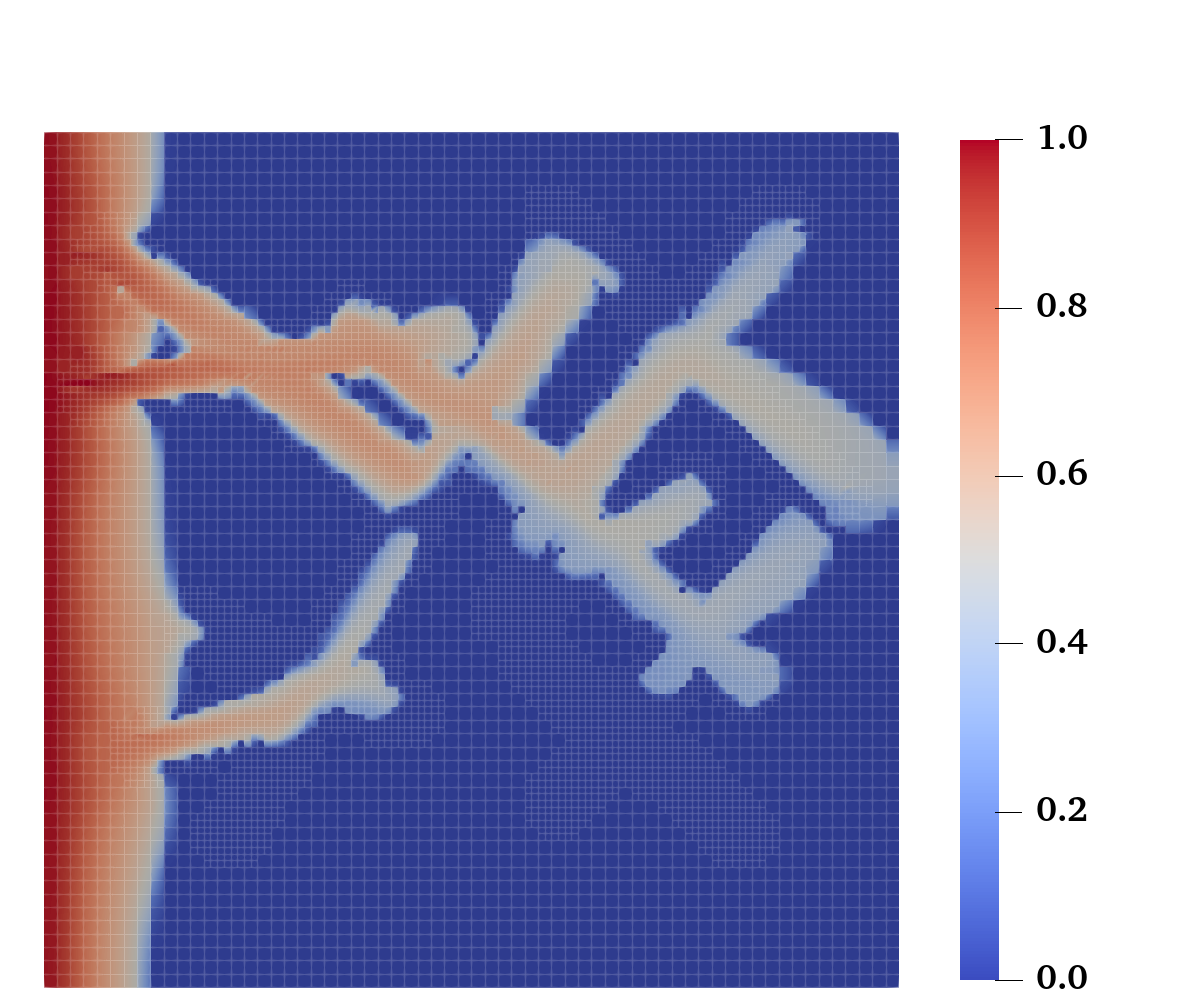}
        \caption{$\SOFD$ with $\delta_{i}\geq\SI{1e-5}{}$; $t = 15$}
    \end{subfigure}

    \begin{subfigure}{0.3\linewidth}
    \centering
        \includegraphics[trim = 0 0 100 120,clip,width=\linewidth]{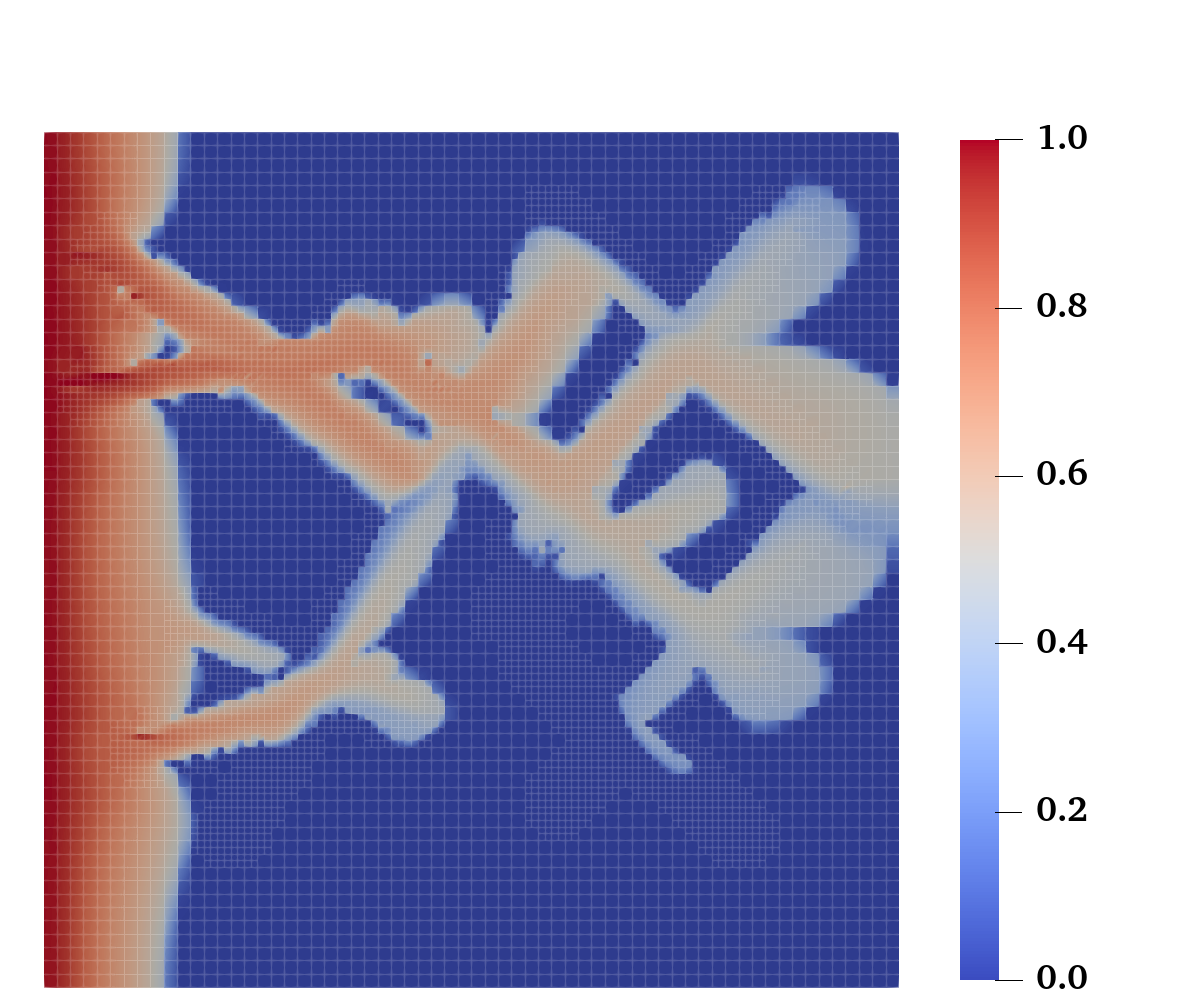}
        \caption{$\SOFD$ with $\delta_{i}\geq\SI{1e-5}{}$; $t = 20$}
    \end{subfigure}
    % \hfill
    \hspace{1.5cm}
    \begin{subfigure}{0.3\linewidth}
    \centering
        \includegraphics[trim = 0 0 100 120,clip,width=\linewidth]{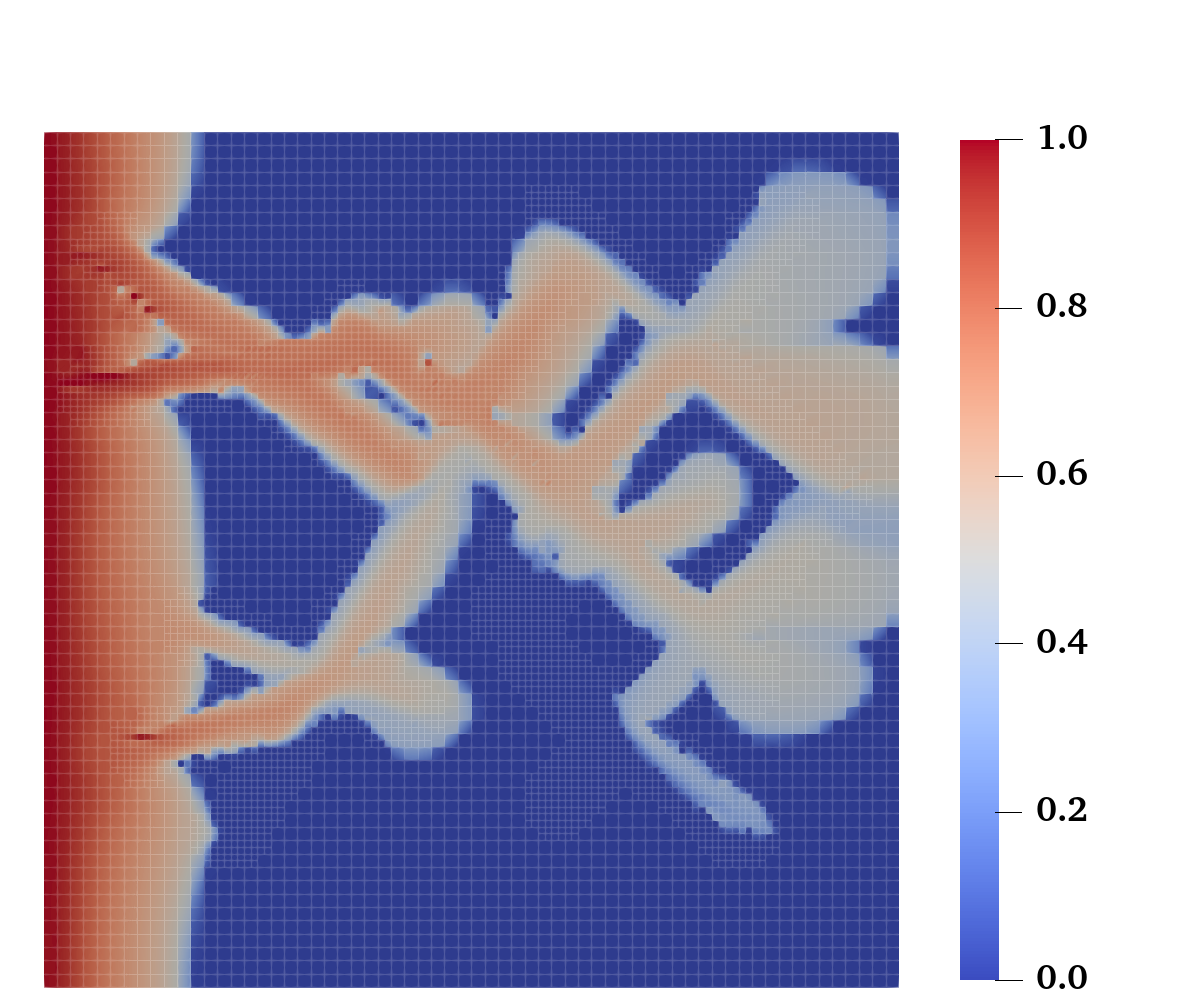}
        \caption{$\SOFD$ with $\delta_{i}\geq\SI{1e-5}{}$; $t = 25$}
    \end{subfigure}
    % \hfill
    % \begin{subfigure}{0.3\linewidth}
    % \centering
    %     \includegraphics[trim = 0 0 100 120,clip,width=\linewidth]{ex4-detector-2999.png}
    %     \caption{$\SOFD$ with $\delta_{lm}\geq\SI{1e-4}{}$; $t = 30$}
    % \end{subfigure}
    \caption{(Example~3) $\SOFD$ at time $t = 5, 10, 15, 20$, and $25$.}
    \label{ex4-detector}
\end{figure}

\section{Conclusion} \label{conclusion}
In this paper, we introduce a bound-preserving anti-oscillation postprocessing procedure for DG discretization of two-phase porous media flow equations with the IMPES method. 
To the best of our knowledge, this is the first application of the optimization-based cell average limiter to two-phase porous media flow, and the first extension of this framework to non-matching meshes. 
The optimization-based limiter modifies cell averages to satisfy the prescribed bounds and preserve global mass by minimizing the $L^2$ distance to the original cell averages. 
For non-matching meshes, we establish nonexpansiveness of the cell average limiter at the matrix-vector level. We employ the DY splitting method to efficiently compute the minimizer. 
To suppress spurious oscillations, a minmod limiter is applied, combined with a troubled-cell detector that avoids unnecessary numerical diffusion by limiting only troubled cells. The resulting framework is simple to implement, requiring only three core postprocessing operations, and is well-suited for large-scale simulations. 
Numerical experiments confirm that the proposed method preserves optimal convergence rates, enforces conservation and physical bounds, produces sharp saturation fronts, and performs robustly on both homogeneous and non-homogeneous porous media, including fractured domains.

\section*{Acknowledgments}
Sanghyun Lee was partially supported by the U.S. National Science Foundation under Grant 
DMS-2208402 and the U.S. Department of Energy, Office of Science, Energy Earthshots Initiatives under Award Number DE-SC-0024703. The work of Seungmin Lee is supported by the U.S. Department of Energy, Office of Science, Energy Earthshots Initiatives under Award Number DE-SC-0024703. Chen Liu is partially supported by the U.S. National Science Foundation under Grant DMS-2513106.

%% Loading bibliography style file
%\bibliographystyle{model1-num-names}
\bibliographystyle{cas-model2-names}
\bibliography{ref}

\end{document}